\documentclass[12pt]{amsart}
\usepackage{amsmath}
\usepackage{amssymb}
\usepackage{wasysym}
\usepackage{graphicx}
\usepackage{xcolor}
\usepackage{hyperref}
\numberwithin{equation}{section}
\usepackage[all]{xy}

\newcommand{\lra}{\longrightarrow}
\newcommand{\ra}{\rightarrow}

\newcommand{\blambda}{\boldsymbol{\lambda}}
\newcommand{\bmu}{\boldsymbol{\mu}}

\newcommand{\bA}{\mathbb{A}}
\newcommand{\bF}{\mathbb{F}}
\newcommand{\bG}{\mathbb{G}}
\newcommand{\bP}{\mathbb{P}}

\newcommand{\bZ}{\mathbb{Z}}

\newcommand{\cA}{\mathcal{A}}
\newcommand{\cE}{\mathcal{E}}
\newcommand{\cL}{\mathcal{L}}
\newcommand{\cK}{\mathcal{K}}
\newcommand{\cO}{\mathcal{O}}
\newcommand{\cS}{\mathcal{S}}

\newcommand{\fS}{\mathfrak{S}}

\newcommand{\rH}{\mathrm{H}}

\newcommand{\oX}{\overline{X}}

\newcommand{\wG}{\widehat{G}}

\newcommand{\wQ}{\widehat{q}}
\newcommand{\wT}{\widehat{\tau}}

\newcommand{\Br}{\mathrm{Br}}
\newcommand{\Bl}{\mathrm{Bl}}

\newcommand{\cok}{\operatorname{cok}}
\newcommand{\Cone}{\operatorname{Cone}}
\newcommand{\disc}{\operatorname{disc}}
\newcommand{\End}{\operatorname{End}}
\newcommand{\GL}{\mathrm{GL}}
\newcommand{\Gr}{\mathrm{Gr}}
\newcommand{\GO}{\mathrm{GO}}
\newcommand{\OO}{\mathrm{O}}
\newcommand{\Hom}{\mathrm{Hom}}
\newcommand{\OGr}{\mathrm{OGr}}

\newcommand{\oPSO}{\overline{\mathrm{PSO}}}
\newcommand{\PGL}{\mathrm{PGL}}
\newcommand{\Pic}{\mathrm{Pic}}
\newcommand{\PGO}{\mathrm{PGO}}
\newcommand{\PSO}{\mathrm{PSO}}
\newcommand{\PWitt}{\mathrm{PWitt}}
\newcommand{\rank}{\operatorname{rank}}
\newcommand{\SO}{\mathrm{SO}}
\newcommand{\Spin}{\operatorname{Spin}}

\newcommand{\Sym}{\mathrm{Sym}}

\newcommand{\spa}{\mathrm{span}}
\newcommand{\SWitt}{\mathrm{SWitt}}
\newcommand{\Witt}{\mathrm{Witt}}

\theoremstyle{plain}
\newtheorem{prop}{Proposition}[section]

\newtheorem{coro}[prop]{Corollary}

\newtheorem{assu}[prop]{Assumption}

\theoremstyle{definition}
\newtheorem{defi}[prop]{Definition}

\newtheorem{ques}[prop]{Question}

\theoremstyle{remark}
\newtheorem{rema}[prop]{Remark}

\newtheorem{exam}[prop]{Example}

\title[Witt rings and Pfister forms]{Witt rings, Pfister forms, and equivariant birational geometry}

\author{Brendan Hassett}
\address{Department of Mathematics\\
Brown University\\
Box 1917 \\
151 Thayer Street,
Providence, RI 02912 \\
USA}
\email{brendan\underline{ }hassett@brown.edu}

\author{Yuri Tschinkel}
\address{
  Courant Institute \\
  251 Mercer Street\\
  New York, NY 10012, USA \\
\indent  Simons Foundation\\
160 Fifth Avenue\\
New York, NY 10010,
USA }
\email{tschinkel@cims.nyu.edu}

\begin{document}

\begin{abstract}
    We study equivariant birational geometry of quadrics with actions of finite groups and develop analogs of Witt groups and Pfister theory in the equivariant context. 
\end{abstract}

\maketitle

\date{\today}

\section{Introduction}

The birational geometry of smooth quadric hypersurfaces over 
a non-closed field is a beautiful and intricate subject. 
Generic splitting fields, essential dimension, motivic cohomology,
and the Milnor conjectures interact to form a deep and varied
theory. We refer the reader to \cite{TotBGQ} for an overview
of birational questions, \cite{DR,KMEDQ} for important insights
coming from versality constructions, and \cite{EKMbook,kahn-book}
for a comprehensive overview. At the same time, there are intriguing results for quadrics in specific dimensions, see, e.g.,  \cite{Hoffmann5D,Karpenko-survey}.

Our focus here is on quadrics with generically free actions of
finite groups $G$ and their equivariant birational geometry. 
Quadrics over fields and over classifying spaces $BG$ have many
common features, especially, where cohomological invariants
are concerned.  However, birational questions over $BG$ are not
as well-understood, with most results focusing on examples 
of small dimensions, see \cite{HTEGLDQ}, \cite[Section 4]{CTZ}. 

To move beyond low dimensions, we aim to translate
ideas and techniques from the theory of quadratic forms 
(of arbitrary dimension!) over fields to the equivariant context.
Here we consider Witt rings and Pfister forms. One complication is
the distinction between stable and unstable notions;
for us, stabilization is taking the product with a linear representation
of $G$. At an elementary level, rational varieties over infinite
fields have many rational points, which we can exploit for geometric
constructions. Witt's theory of isotropic subspaces is the
archetypal example: Once we have an isotropic subspace, we can produce
many more of the same dimension via explicit operations.
In the equivariant context, it is necessary to consider {\em all} such
constructions simultaneously; even linear actions of non-abelian
groups on projective space may lack fixed points.  However, this
presents no difficulty when the parameter space itself
is stably linearizable.

Section~\ref{sect:equiv} establishes a bridge between equivariant
stable birational geometry and birational geometry over non-closed fields.
We review quadrics over fields in Section~\ref{sect:quad} and
wonderful compactifications in Section~\ref{sect:compactify}; 
this machinery sheds new light on multiplicative quadratic
forms. Equivariant
formulations are developed in Section~\ref{sect:equiv-quad}.
Section~\ref{sect:inv} explores birational implications of 
invariant isotropic subspaces; stable birational properties are revealed
on restriction to $2$-Sylow subgroups. Several notions of
Witt rings are presented in Section~\ref{sect:wring}.
Stably Pfister forms are treated in Section~\ref{sect:equipf};
their unstable analogues are explored 
in Section~\ref{sect:unstable}.

\medskip
\noindent
\textbf{Acknowledgments:}
The first author was partially supported by the Simons Foundation
Award 546235 and the US National Science Foundation Grant 1929284.
The second author was partially supported by NSF grant 2301983.
We are grateful to Candace Bethea for conversations that
informed this project, and to Barry Mazur for suggestions on a draft
of this paper. We used MAGMA \cite{MAGMA} and Gemini for computations informing our results, and ChatGPT
and Claude to proofread the text. 

\section{Equivariant birational geometry}
\label{sect:equiv}
We recall main terms and constructions in equivariant birational geometry; for a more detailed discussion, see, e.g., \cite[Sec. 2]{HTPAMQ}.

Let $k$ be a field of characteristic zero and $X$ a smooth projective variety over $k$ with a regular action by a finite group $G$.
The action is {\em $G$-equivariantly stably linearizable} (or {\em stably rational}) if there exist $G$-representations $V$
and $W$ and a $G$-equivariant birational map
$$X \times V \stackrel{\sim}{\dashrightarrow} W.$$ 
When the action on $X$ is generically free, the No-Name Lemma implies that this is equivalent
to the existence of an equivariant birational map
$$X  \times \bA^d \stackrel{\sim}{\dashrightarrow} W,$$
where the action on $\bA^d$ is trivial. It is also equivalent to the existence 
of a $G$-equivariant vector bundle $E \rightarrow X$ and an equivariant
$$E \stackrel{\sim}{\dashrightarrow} W.$$

Two smooth projective varieties $X_1$ and $X_2$ with $G$-actions are {\em $G$-equivariantly stably birational} if there exist representations $W_1$ and $W_2$ and a $G$-equivariant birational
$$X_1 \times W_1 \stackrel{\sim}{\dashrightarrow} X_2 \times W_2.$$
This is an equivalence relation.  
Again, if the actions on $X_1$ and $X_2$ are generically free, this follows from 
the existence of a $G$-equivariant birational 
$$E_1 \stackrel{\sim}{\dashrightarrow} E_2,$$
where $E_1$ and $E_2$ are $G$-equivariant vector bundles over $X_1$ and $X_2$, 
respectively.

The work of Duncan and Reichstein \cite{DR} provides links between equivariant
birational geometry and geometry over non-closed fields. In particular, we can test equivariant
stable birationality using twisted forms over function fields. 
We record the following strengthening of \cite[Th. 1.1]{DR}:

\begin{prop} 
\label{prop:dr}
Fix a linear algebraic group $G$ and geometrically irreducible varieties 
$X_1$ and $X_2$, with 
regular $G$-actions. Then the following are equivalent:
\begin{itemize} 
\item[(1)] $X_1$ and $X_2$ are $G$-equivariantly stably birational;
\item[(2)] for every $G$-torsor $T$
defined over any extension $K/k$, the twists
$${ }^T X_1, { }^TX_2$$
are stably birational over $K$;
\item[(3)] there exists a linear representation
$V$ of $G$ for which $G$ acts freely on a dense open subset
$V_0 \subset V$, with field of invariants $F=k(V)^G$,
such that the twists
$$
{ }^{V_0} X_1, { }^{V_0}X_2
$$
are stably birational over $F$.
\end{itemize}
\end{prop}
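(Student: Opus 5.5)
I will prove the cycle of implications $(1)\Rightarrow(2)\Rightarrow(3)\Rightarrow(1)$, following the strategy of \cite[Th. 1.1]{DR}. The point where the statement goes beyond \cite{DR} is that in (3) we only test a single generic torsor rather than all torsors.

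\emph{$(1)\Rightarrow(2)$.} Suppose we have a $G$-equivariant birational map $X_1\times W_1 \dashrightarrow X_2\times W_2$ with $W_1,W_2$ linear representations. Let $T$ be a $G$-torsor over $K$. Twisting by $T$ is functorial for $G$-equivariant rational maps, so we obtain a $K$-birational map ${}^T X_1\times {}^T W_1 \dashrightarrow {}^T X_2\times {}^T W_2$. Here one must check that the twisted map is still defined on a dense open subset; this holds because the domain of definition is $G$-stable and twisting preserves dense opens. Each ${}^T W_i$ is the twist of a vector space by a cocycle with values in $\GL(W_i)$. By Hilbert 90 it is therefore $K$-isomorphic to affine space. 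This gives stable birationality over $K$.

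\emph{$(2)\Rightarrow(3)$.} Choose a faithful representation $V$ of $G$; for a linear algebraic group, $G$ acts generically freely on $V$. There is then a dense open $V_0\subset V$ on which a geometric quotient $V_0\to V_0/G$ exists and which is a $G$-torsor. Its generic fiber is a $G$-torsor over $F=k(V)^G$. Applying (2) to this torsor gives (3) directly.

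\emph{$(3)\Rightarrow(1)$.} This is the main step. The twist ${}^{V_0}X_i$ is the generic fiber over $V_0/G$ of the quotient $(X_i\times V_0)/G$, at least after shrinking $V_0$ to make the quotient exist. A stable birational equivalence over $F$ has the form ${}^{V_0}X_1\times\bA^{n_1}\dashrightarrow {}^{V_0}X_2\times \bA^{n_2}$. Spreading it out gives a birational map over $V_0/G$:
$$(X_1\times V_0)/G\times\bA^{n_1}\dashrightarrow (X_2\times V_0)/G\times \bA^{n_2}.$$
Pulling back along the torsor $V_0\to V_0/G$ yields a $G$-equivariant birational map
$$X_1\times V\times \bA^{n_1}\dashrightarrow X_2\times V\times\bA^{n_2},$$
with trivial action on the affine factors. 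Since $V\times \bA^{n_i}$ is again a linear representation, this is exactly statement (1).

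The obstacle I expect is the bookkeeping in the last step. First, the quotients must exist, which requires a $G$-stable affine (or quasi-projective) open cover of $X_i$, or else passing to a $G$-invariant dense open. Second, we must confirm that pulling back a birational map of the quotients along the torsor really gives a $G$-equivariant birational map. This holds because the pullback is $(X_i\times V_0)\times\bA^{n_i}$, the fiber product over $V_0/G$. The key point is that the stable birational map is defined over the base $F$ and is compatible with the projections to $V_0/G$; this must be arranged when spreading out, and the factors $\bA^{n_i}$ are pulled back as trivial bundles. No No-Name Lemma is needed here, since the representation $V$ appears explicitly on both sides.
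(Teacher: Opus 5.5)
Your argument matches the paper's proof: the same cycle of implications, Hilbert 90 for the twisted representations in $(1)\Rightarrow(2)$, and in $(3)\Rightarrow(1)$ your spreading-out and pullback along the torsor $V_0\to V_0/G$, which is exactly the content of \cite[Lem.~5.1]{DR} that the paper cites instead of reproving. The only slip is in $(2)\Rightarrow(3)$: a faithful representation of a linear algebraic group need not be generically free (e.g.\ $\mathrm{SL}_2$ on $k^2$ has unipotent generic stabilizers), although this does hold for finite $G$; in general take $V=\End(W)$ for a faithful $W$, since $G$ acts freely on the open subset $\GL(W)$.
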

Similar reasoning may be found in \cite[{\S}2]{KrTs26}.

\begin{proof}  
The implication $(2) \Rightarrow (3)$ is immediate,
as $V_0$ is a $G$-torsor over $V_0/G$. 
Below we use $S$ to denote the corresponding torsor
over $F$.

We prove $(1) \Rightarrow (2)$.  
Since $X_1$ and $X_2$ are equivariantly stably
birational, there exist $G$-representations $W_1$ and $W_2$
and an equivariant birational
$$\phi: X_1 \times W_1 \stackrel{\sim}{\dashrightarrow}
X_2 \times W_2.$$
Twisting via $T$, we obtain
$${ }^T\phi: { }^TX_1 \times_K { }^TW_1 
\stackrel{\sim}{\dashrightarrow} { }^TX_2 \times_K { }^TW_2.$$
By Hilbert's Theorem 90, the twisted representations
are $K$-rational; it follows that ${ }^TX_1$ and ${ }^TX_2$
are stably birational over $K$.

We turn to $(3) \Rightarrow (1)$.  We have birational maps
$$
{ }^{V_0} X_1 \times \bA^e_F \stackrel{\sim}{\dashrightarrow}
{ }^{V_0}X_2 \times \bA^d_F.
$$
We also have
$${ }^{V_0} X_1 \times \bA^e_F \simeq { }^{V_0}(X_1 \times \bA^e_F)
$$
with trivial action on the last $\bA^e_F$.  

We recall \cite[Lem.~5.1]{DR}: Suppose that $X$ has a $G$-action. A birational map
$$
Y_F \stackrel{\sim}{\dashrightarrow} { }^{V_0}X 
$$
yields a $G$-equivariant birational 
$$V \times Y \stackrel{\sim}{\dashrightarrow} V \times X.
$$

Applying this, we get $G$-equivariant birational
$$V \times \bA^e_k \times X_1 
\stackrel{\sim}{\dashrightarrow}
V \times \bA^d_k \times X_2$$
with trivial actions on the affine spaces.  
It follows that $X_1$ and $X_2$ are $G$-equivariantly stably birational.
\end{proof}

\section{Quadrics over fields}
\label{sect:quad}

Assume that the base field $k$ has characteristic not equal to two.

\subsection*{Background on quadratic and bilinear forms}
Let $V$ be a finite-dimensional vector space over $k$. 
A symmetric bilinear form
$$b:V \times V \rightarrow k$$
is {\em non-degenerate} if the associated linear map
$$\begin{array}{rcl}
    s:V& \ra & V^{\vee} \\
    v & \mapsto & b(v,-)
    \end{array}
$$ 
is an isomorphism; its associated quadratic form is 
$$
q(v):=b(v,v).
$$
Its {\em rank} is the dimension of $V$.
Given symmetric bilinear forms $b_i:V_i\times V_i \rightarrow k$
we may define their direct sum
\begin{align*}
&\,\,b_1\oplus b_2: (V_1 \oplus V_2) \times (V_1 \oplus V_2) \rightarrow k,\\
&(b_1\oplus b_2)(v_1+v_2, v'_1+v'_2)=b_1(v_1,v_1') + b_2(v_2,v_2')
\end{align*}
and tensor product
\begin{align*}
 & \,\,   b_1\otimes b_2: (V_1 \otimes V_2) \times (V_1 \otimes V_2) \rightarrow k,
    \\
& (b_1\otimes b_2)(v_1\otimes v_2, v'_1\otimes v'_2)=b_1(v_1,v_1') b_2(v_2,v_2').
\end{align*}

A linear map $g:V \rightarrow V$ is a {\em similitude} with respect to $b$ if
there is a scalar $\lambda(g)$ with
$$b(g(v),g(v'))= \lambda(g) b(v,v'), \quad \text{ for all }v,v' \in V.$$
These form a group $\GO(V,b)$ with a natural character
$$\lambda: \GO(V,b) \rightarrow \bG_m;$$
its kernel is denoted $\OO(V,b)$ and elements of determinant one
is written $\SO(V,b)$. There is analogous
notation for quadratic forms $q$.
We may interpret $q \in \Sym^2(V^{\vee})\otimes L$,
where $L$
is the one-dimensional representation associated with $\lambda$.
In the non-degenerate case, we have \cite[\S 12.A]{KMRT}
$$\det(g)^2 = \lambda(g)^{\dim(V)}.$$

\subsection*{Discriminant}
Assuming further that $\dim(V)=2m$, we obtain a character
$$\disc:\GO(V,b) \ra \{ \pm 1\}, \quad \disc(g)=\det(g) \lambda(g)^{-\dim(V)/2}.$$
This coincides with the determinant on 
restriction to $\OO(V,b)$.

\

Given a non-degenerate quadratic form $(V,q)$, an {\em isotropic subspace} 
$W\subset V$ is one where $q|W=0$;  we have $\ell:=\dim(W) \le \dim(V)/2$. 
The {\em isotropic
Grassmannian} 
$$
\OGr(\ell,q)
$$
is the variety of all 
$\ell$-dimensional isotropic subspaces. This scheme is non-empty 
for $\ell \le \dim(V)/2$.  These come with a $\GO(V,q)$ action.

\subsection*{Clifford algebras and spin: Even-dimensional case}

We recall basic facts from \cite[\S 85]{EKMbook}:
Let $(V,q)$ be non-degenerate with $\dim(V)=2m$ and $\OGr(m,q)$ its
maximal isotropic Grassmannian. Geometrically, it has two connected components $\OGr(m,q)_{\pm}$, each smooth
of dimension $(m-1)m/2$, permuted according to discriminant character.
Over the discriminant extension, each connected component of $\OGr(m,q)$ is a point for $m=1$, a conic when $m=2$,
and a Brauer-Severi threefold for $m=3$.

By \cite[pp. 387-390]{FulHar}, over an algebraically closed field, we have embeddings 
$$
\OGr(m,q)_{\pm} \hookrightarrow \bP(S_{\pm}),
$$
where the $S_{\pm}$ are the {\em half-spin representations}, which are irreducible $2^{m-1}$-dimensional representations of the spin group
\begin{equation}
\label{eqn:spin}
1 \rightarrow \mu_2 \ra \Spin(q) \rightarrow \SO(V,q) \rightarrow 1.
\end{equation}
The even Clifford algebra decomposes \cite[p.~305]{FulHar}
$$
C_0(q) \simeq \End(S_+) \times \End(S_-).
$$

Over a nonclosed field, we have:
\begin{prop} \cite[Rem.~13.9]{EKMbook} 
\label{prop:spinAmitsur}
Let $(V,q)$ be a non-degenerate quadratic form over a field $k$, with $\dim(V)=2m$ and trivial discriminant.
Then 
$$C_0(q) \simeq C^+(q) \times C^-(q)$$
and the Brauer-Severi varieties associated with $\bP(S_{\pm})$ have the same class in $\Br(k)$, realized via Clifford invariant 
$c(q) \in \rH^2(k,\mu_2)$ arising from the connecting homomorphisms of (\ref{eqn:spin}).
\end{prop}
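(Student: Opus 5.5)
The plan is to work with the structure theory of the even Clifford algebra together with Galois descent for the half-spin representations. The center $Z$ of $C_0(q)$ is the quadratic étale algebra $k[t]/(t^2-d)$, where $d$ is the signed discriminant. Trivial discriminant means $d$ is a square, so $Z\simeq k\times k$. The two primitive central idempotents $e_\pm=(1\pm t/\sqrt{d})/2$ are then defined over $k$, and we get the decomposition $C_0(q)\simeq C^+(q)\times C^-(q)$ with $C^\pm(q)=e_\pm C_0(q)$. We know $C_0(q)$ is central simple over $Z$ (standard for even rank). Hence each $C^\pm(q)$ is a central simple $k$-algebra of degree $2^{m-1}$. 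After base change to $\bar k$ this recovers $\End(S_+)\times\End(S_-)$ from \cite[p.~305]{FulHar}.

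Next, I would identify the Brauer–Severi varieties. Over $\bar k$, $\Spin(q)$ acts on $S_\pm$ through $C_0(q)$. The natural $\Gamma_k$-action on $C_0(q)_{\bar k}$ preserves each factor, because the idempotents are rational. The twisted forms of $\bP(S_\pm)$ containing $\OGr(m,q)_\pm$ are therefore exactly the Brauer–Severi varieties $\mathrm{SB}(C^\pm(q))$. Concretely, each component $\OGr(m,q)_\pm$ is defined over $k$, because the discriminant character is trivial on $\Gamma_k$. The spinor embedding is $\SO(q)$-equivariant, hence Galois-descends.

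For the cohomological description, I would use the exact sequence (\ref{eqn:spin}). The form $q$ corresponds to a class in $\rH^1(k,\OO(V,q_0))$ for a split $q_0$, and trivial discriminant lifts it to $\rH^1(k,\SO(q_0))$. Its image under the connecting map to $\rH^2(k,\mu_2)$ is the obstruction to lifting the twisting cocycle to $\Spin$. On the other hand, the class of $\mathrm{SB}(C^\pm)$ is the obstruction to lifting the induced cocycle in $\PGL(S_\pm)$ to $\GL(S_\pm)$. There is a commutative diagram
$$\Spin(q_0)\to \GL(S_\pm),\qquad \SO(q_0)\to\PGL(S_\pm),$$
compatible with $\mu_2\hookrightarrow\bG_m$. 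By functoriality of connecting homomorphisms, both Brauer classes equal the image of $c(q)$ under $\rH^2(k,\mu_2)\to\Br(k)$. In particular they coincide.

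The main obstacle is checking that $\Spin\to\GL(S_\pm)$ really restricts to the inclusion $\mu_2\hookrightarrow\bG_m$ on centers. In other words, $-1\in\Spin$ must act by $-1$ on each half-spin representation, and not trivially. This holds because $S_\pm$ are faithful on the kernel, being genuine spin representations. For $m=1$ the statement is degenerate, with points and trivial classes, and it is consistent with the above. The comparison with the classical Clifford invariant $[C(q)]=[C^\pm(q)]$ then follows from Morita equivalence, since $C(q)\simeq M_2(C^+(q))$ when the discriminant is trivial.
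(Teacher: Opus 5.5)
Your argument is correct. The paper gives no proof of this statement; it quotes it from \cite[Rem.~13.9]{EKMbook}. So you are supplying the standard argument that the paper leaves to the literature.

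Your two halves do different jobs. The algebraic half already proves that the two classes coincide, with no cohomology. Every $v\in V$ anticommutes with $t$, so $e_\pm C_1(q)e_\pm=0$ and $e_\pm C(q)e_\pm=C^\pm(q)$. Since $eAe$ is Brauer equivalent to $A$ for any nonzero idempotent $e$ of a central simple algebra $A$, a dimension count gives $C(q)\simeq M_2(C^\pm(q))$, hence $[C^+(q)]=[C^-(q)]=[C(q)]$.

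The cohomological half uses the morphism of central extensions from $1\to\mu_2\to\Spin(q_0)\to\SO(q_0)\to 1$ to $1\to\bG_m\to\GL(S_\pm)\to\PGL(S_\pm)\to 1$. It identifies the common class with the connecting-homomorphism invariant $c(q)$, which is the form in which the paper states and uses the result. It is also the argument that carries over to the $G$-Clifford invariants $\alpha_\pm$ defined later in the paper. For $G$ acting through $\SO(V,q)$, the same diagram chase shows that both $\alpha_\pm$ are the image in $\rH^2(G,\bG_m)$ of the class in $\rH^2(G,\mu_2)$ of the pulled-back spin extension, so they are equal.

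Two points to tighten.

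First, you identify $[\mathrm{SB}(C^\pm(q))]$ with the obstruction to lifting the image of $\xi$ in $\rH^1(k,\PGL(S_\pm))$. That step silently uses ${}^{\xi}\End(S_\pm)\simeq C^\pm(q)$, which is the real bridge between your two halves and deserves a line. By the universal property of $C(q_0)$, an element $g\in\SO(q_0)$ acts on $C(q_0)$ as conjugation by any lift $\tilde g\in\Spin(q_0)\subset C_0(q_0)^\times$. So it fixes $t$, preserves the two factors, and acts on $\End(S_\pm)$ through the projective half-spin representation. Since $C_0$ is functorial, twisting by $\xi$ gives ${}^{\xi}C_0(q_0)\simeq C_0(q)$.

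Second, what you call the main obstacle is immediate: $-1\in\Spin(q_0)$ is the scalar $-1$ of $C_0(q_0)$, so it acts by $-1$ on every $C_0(q_0)$-module. Your phrase ``on centers'' should read ``on $\ker(\Spin\to\SO)$''. For even $m$ the center $\mu_2\times\mu_2$ of $\Spin(q_0)$ does not act faithfully on either $S_\pm$.
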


We elaborate on Clifford structures and the actions of similitudes \cite[\S 13.A]{KMRT}. 
Recall that $L$ is the representation of $\GO(V,q)$ associated with $\lambda$.
Given $q \in \Sym^2(V^{\vee})\otimes L$ we may construct the {\em even Clifford algebra}
$$C_0(q) = \left( \oplus_{i=0}^m (V^{\otimes 2i}\otimes L^{-i}) \right)/I, \quad 2m=\dim(V),$$
where the ideal $I$ comes from the multiplication induced by
$$v^2 \ell \mapsto \ell(q(v)), \quad v\in V, \ell \in L^{\vee},
$$
i.e.,
$$I=\left< v^2 \otimes \ell - \ell(q(v^2)) \right>.$$
Thus we have an isomorphism of $\GO(V,q)$-representations
$$C_0(q) = \oplus_{i=0}^m (\wedge^{2i} V \otimes L^{-i})$$
as well as an action of $\GO(V,q)$. 
When $\dim(V)=2m$, its center
is a quadratic algebra containing $1$, determining
the discriminant invariant \cite[Prop. 11.6]{EKMbook}.
.

\subsection*{Clifford algebras and spin: Odd-dimensional case}
We now consider $(V,q)$ with $\dim(V)=2m+1$. 
Over an algebraically closed field, $C_0(q) \simeq \End(S)$
where $S$ is the irreducible spin representation of the associated
spin group \cite[Prop.~20.20]{FulHar}.
We have an embedding \cite[p.~390]{FulHar}
$$\OGr(m,q) \hookrightarrow \bP(S)$$
such that the square of the ample line bundle equals
the polarization from the Pl\"ucker embedding.  

Over a general field, $C_0(q)$ is a central simple algebra
\cite[Prop.~11.6(2)]{EKMbook}, also called 
the Clifford invariant of $(V,q)$,
and $\OGr(m,q)$ embeds into the corresponding Brauer-Severi
variety of dimension $2^m-1$.  
When $m=2$, the embedding is an isomorphism and 
$\OGr(2,q)$ is a Brauer-Severi threefold.

\subsection*{Splitting varieties of linear subspaces}
\begin{prop}
\label{prop:linalg}
Let $(V,q)$ be a non-degenerate quadratic form over a field $k$.
\begin{itemize}
\item{If $\dim(V)=2m-1$ and $\OGr(m-1,q)$ has a $k$-rational point then it is rational.}
\item{If $\dim(V)=2m$ and $\OGr(m,q)$ has a $k$-rational
point then the irreducible component of $\OGr(m,q)$ 
containing that point is rational over $k$.}
\end{itemize}
\end{prop}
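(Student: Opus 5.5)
The plan is to show that, once a $k$-rational maximal isotropic subspace exists, the relevant isotropic Grassmannian contains a dense open subset isomorphic to affine space. The model is the big Bruhat cell of the Grassmannian, built from a Witt decomposition.

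Treat the even case first. Suppose $W\subset V$ is a $k$-rational isotropic subspace with $\dim W = m$ and $\dim V=2m$. Then $q$ is hyperbolic. By Witt's theory, choose a complementary isotropic subspace $W'$, so that $V=W\oplus W'$ and $b$ induces a perfect pairing $W'\simeq W^{\vee}$.

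Consider the open subset $U\subset \OGr(m,q)$ of isotropic $m$-planes $\Lambda$ with $\Lambda\cap W=0$. Each such $\Lambda$ is the graph of a unique linear map $\phi:W'\to W$. Writing $q(w'+\phi(w')) = 2b(w',\phi(w'))$, isotropy of the graph is equivalent to the bilinear form $(w_1',w_2')\mapsto b(w_1',\phi(w_2'))$ on $W'$ being alternating. Under the identification $\Hom(W',W)\simeq W^{\otimes 2}$, this says $\phi\in \wedge^2 W$.

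Hence $U\simeq \bA^{m(m-1)/2}$, which matches the dimension $(m-1)m/2$ recalled above. The set $U$ contains $W'$, so it is nonempty. It lies in the single connected component containing $W'$, and it is open there. That component is smooth and irreducible, so $U$ is dense in it, and the component is rational.

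This gives rationality of the component containing $W'$. The statement, however, concerns the component containing the given point $W$. There are two ways to handle this. First, $W$ and $W'$ lie in the same component exactly when $m$ is even, since the parity of $\dim(\Lambda\cap\Lambda')$ detects the component. Second, and more simply, rerun the construction with $W$ and $W'$ swapped: take $U'=\{\Lambda:\Lambda\cap W'=0\}$, which contains $W$ itself. This second choice avoids any parity bookkeeping, so I will use it.

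For the odd case, take $\dim V = 2m-1$ and a rational isotropic subspace $W$ of dimension $m-1$. Write $V = W\oplus W'\oplus \langle e\rangle$, where $W'$ is isotropic and paired with $W$, and $e$ is anisotropic and orthogonal to $W\oplus W'$.

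Take $U=\{\Lambda : \Lambda\cap (W'\oplus\langle e\rangle) = 0\}$. This set contains $W$. Each $\Lambda\in U$ is the graph of a pair of maps $\phi:W\to W'$ and $\psi:W\to \langle e\rangle$. Isotropy becomes the condition
$$b(w_1,\phi w_2)+b(\phi w_1,w_2)+q(e)\psi(w_1)\psi(w_2)=0,$$
which says the symmetric part of $\phi$, viewed in $W^{\vee}\otimes W^{\vee}$, is determined by $\psi$. So $\psi\in W^{\vee}$ is free and the antisymmetric part of $\phi$ is free. Here I use $\mathrm{char}\, k\neq 2$ to solve for the symmetric part by dividing by $2$.

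This gives $U\simeq \bA^{(m-1)+\binom{m-1}{2}}$, whose dimension is $m(m-1)/2=\dim \OGr(m-1,q)$. Since $\OGr(m-1,q)$ is geometrically irreducible in the odd case, $U$ is dense, and the variety is rational.

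The main obstacle is not the coordinate computation, which is linear algebra. It is the justification that $U$ is dense in the relevant component, that is, irreducibility and smoothness of $\OGr(m-1,q)$ in the odd case and of each component $\OGr(m,q)_\pm$ in the even case. I would take these from the facts recalled above, citing \cite[\S 85]{EKMbook}. Alternatively, $U$ is the orbit of the unipotent radical of the stabilizer of $W'$ (respectively $W'\oplus\langle e\rangle$), which is the open Bruhat cell and hence dense.
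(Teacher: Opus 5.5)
Your proposal is correct and follows essentially the same route as the paper. You parametrize the open set of isotropic subspaces transverse to a complementary isotropic subspace $W'$ (respectively $W'\oplus\langle e\rangle$) as graphs of maps out of the given rational subspace. The swap you chose in the even case matches the paper's use of graphs of maps $L\to M$, a family that contains $L$.

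In the odd case, your isotropy condition, with the symmetric part of $\phi$ determined by $\psi$, is the accurate form of the paper's wording. The paper literally requires the $\Hom(L,M)$-projection to be antisymmetric, which is only correct when the $w$-component vanishes. Both versions give the same count $\binom{m-1}{2}+m-1$.
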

\begin{proof}
We establish the first statement:
Let $L \subset V$ denote an isotropic subspace of dimension
$m-1$ over $k$. Linear algebra gives another isotropic
subspace $M \subset V$ of dimension $m-1$ such that
the induced pairing between $L$ and $M$ is non-degenerate
i.e., $M\simeq L^{\vee}$. Write
$$L^{\perp} \cap M^{\perp} = \spa(w)$$
for some non-zero $w \in V$.
Consider elements
$$A \in \Hom(L,M\oplus k w)$$
whose projections to $\Hom(L,M)\simeq \Hom(L,L^{\vee})$
are anti-symmetric under duality. 
This anti-symmetry is equivalent to the graph
$$\Gamma_A \subset L \oplus M \oplus k w = V$$
being isotropic for $q$.
The generic element of $\OGr(m-1,q)$ arises from such
a graph, giving a birational
$$\bA^{\binom{m-1}{2} + m-1} \stackrel{\sim}{\dashrightarrow}
\Gr(m-1,q),$$
and the desired rationality.

Now for the second assertion:
Let $L \in \OGr(m,q)$ be the rational point;
label the irreducible components so that 
$L \in \OGr(m,q)_+$.
Using linear algebra, we find an isotropic subspace
$M \subset V$ of dimension $m$ such that the
induced pairing between $L$ and $M$ is non-degenerate, i.e. $M\simeq L^{\vee}$. Note that $M \in \OGr(m,q)_+$
if $m$ is even and $M \in \OGr(m,q)_-$ 
when $m$ is odd.  Consider elements
$$A \in \Hom(L,M)\simeq \Hom(L,L^{\vee})$$
that are anti-symmetric under duality, which is equivalent to the graph
$$\Gamma_A \subset L \oplus M = V$$
being isotropic for $q$. Thus 
$\OGr(m,q)_+$ is birational to an affine
space of dimension $m(m-1)/2$. 
\end{proof}
For odd $m$, the same argument with $L$
and $M$ interchanged implies that $\OGr(m,q)_-$
is also rational. 


\begin{prop}
\label{prop:vq}
Let $(V,q)$ be a non-degenerate quadratic form
over a field $k$. 
Suppose $\dim(V)=2m$ and $\OGr(m,q)_{\pm}$ are both
defined over $k$, i.e., the discriminant of $q$
is trivial. Then $\OGr(m,q)_+$ and $\OGr(m,q)_-$
are isomorphic to each other. 
\end{prop}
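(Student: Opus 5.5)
The plan is to produce an explicit isomorphism $\OGr(m,q)_+ \simeq \OGr(m,q)_-$ defined over $k$, induced by an element of $\OO(V,q)(k)$ of determinant $-1$. Such an element always exists: $q$ is non-degenerate and $\mathrm{char}(k)\neq 2$, so it represents some nonzero value $q(v)\neq 0$ with $v\in V$. The reflection
$$r_v(x) = x - \frac{2b(x,v)}{q(v)}\, v$$
is then defined over $k$, lies in $\OO(V,q)(k)$, and has $\det(r_v)=-1$.

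Next I check that $r_v$ exchanges the two components. Being an isometry, $r_v$ acts on $\OGr(m,q)$ by $W\mapsto r_v(W)$, and this is an automorphism of the variety defined over $k$. By the discussion of the even-dimensional case earlier in the paper, similitudes permute the two geometric components $\OGr(m,q)_\pm$ according to the discriminant character. On $\OO(V,q)$ this character coincides with the determinant, so $r_v$ interchanges the components. To be safe, I will also verify this directly over $\bar k$. Recall that two maximal isotropic subspaces $W,W'$ lie in the same component exactly when $\dim(W\cap W')\equiv m \pmod 2$. Choose $W$ with $v\notin W$; this is possible because $v$ is anisotropic, and in fact no isotropic subspace contains $v$. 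Then $r_v(W)\cap W = W\cap v^{\perp}$, which has dimension $m-1$. Hence $r_v(W)$ lies in the other component.

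With these facts, the argument runs as follows. Since the discriminant is trivial, each component $\OGr(m,q)_\pm$ is defined over $k$, meaning it is stable under Galois. The morphism $r_v:\OGr(m,q)\to \OGr(m,q)$ is defined over $k$ and maps $\OGr(m,q)_+$ into $\OGr(m,q)_-$. Its inverse is $r_v$ itself, since $r_v^2=\mathrm{id}$, and this inverse maps $\OGr(m,q)_-$ back into $\OGr(m,q)_+$. The restrictions are therefore mutually inverse $k$-morphisms, which gives the desired isomorphism.

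I expect the main point needing care to be the claim that $r_v$ genuinely swaps the components rather than preserving them. This is exactly the determinant and discriminant compatibility, and the explicit intersection-dimension check above settles it. Everything else is formal.
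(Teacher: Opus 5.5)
Your proposal is correct and is essentially the paper's argument: the paper diagonalizes $q=\sum a_i x_i^2$ and uses the reflection $x_1\mapsto -x_1$, which is exactly your $r_v$ with $v=e_1$, a $k$-rational isometry of determinant $-1$ that interchanges $\OGr(m,q)_{\pm}$. Your intersection-parity check is a direct verification of the swap, which the paper leaves to the discriminant description. In that check, the equality $\dim(W\cap v^{\perp})=m-1$ uses $v\notin W=W^{\perp}$, and you should state this explicitly.
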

In particular, if one component has a rational point then the
the other does as well.
\begin{proof}
Diagonalize 
$$q=\sum_{i=1}^{2m} a_i x_i^2, \quad 0 \neq a_i \in k$$
and consider the reflection
$$(x_1,x_2,\ldots,x_{2m}) \mapsto (-x_1,x_2,\ldots,x_{2m}).$$
This has discriminant $-1$ and induces the desired isomorphism.
\end{proof}
\begin{rema} \label{rema:weakvq}
We show that the components are stably birational without choosing
a reflection: Consider all hyperplanes $W \subset V$. 
Working over the dual space, we obtain a birational
$$
\OGr(m,q)_{\pm} \times V^{\vee} 
\stackrel{\sim}{\dashrightarrow} 
\OGr(m-1,q|W) \times V^{\vee}.$$
Here we regard $q|W$ as a quadratic form over 
the function field of $V^{\vee}$.  
We conclude that the $\OGr(m,q)_{\pm}$ are
mutually stably birational.

Here is an argument valid for odd $m$:
$\OGr(m,q)_+ \times \OGr(m,q)_-$ is birational
to $\OGr(m,q)_+ \times \bA^{m(m-1)/2}$ and
$\bA^{m(m-1)/2} \times \OGr(m,q)_-$.
Thus the components are mutually stably birational.
\end{rema}

\begin{prop} \label{prop:pointtorat}
Let $(V,q)$ be a non-degenerate quadratic form 
over a field $k$ and $\ell$ a positive integer
such that $2\ell < \dim(V)$. If 
$\OGr(\ell,q)$ has a $k$-rational point 
then it is rational over $k$.
\end{prop}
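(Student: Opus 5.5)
Given a $k$-point $L\in\OGr(\ell,q)$, I will imitate the graph construction in the proof of Proposition~\ref{prop:linalg}: write down an explicit open chart of $\OGr(\ell,q)$ around $L$ as a graph over $L$, and show that this chart is birational to affine space over $k$. The hypothesis $2\ell<\dim(V)$ means $L$ is not maximal, so $L^\perp/L$ is a nonzero non-degenerate quadratic space. Consequently $\OGr(\ell,q)$ is irreducible: there is no splitting into two components as in the case $2\ell=\dim(V)$. This is where the strict inequality is used.

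First, by Witt decomposition, choose an isotropic $M\subset V$ with $\dim M=\ell$ such that the pairing $L\times M\to k$ is perfect, so $M\simeq L^\vee$. Put $U:=L^\perp\cap M^\perp$. Then
$$V=L\oplus M\oplus U,$$
where $U$ is non-degenerate of dimension $\dim V-2\ell>0$ and orthogonal to $L\oplus M$. Any $\ell$-dimensional subspace $W$ complementary to $M\oplus U$ is the graph of a unique linear map
$$A=(B,C)\in \Hom(L,M)\oplus\Hom(L,U),$$
and these graphs form a dense open subset of the Grassmannian containing $L$. For $x,y\in L$, the bilinear form on $\Gamma_A$ is
$$b(x+Bx+Cx,\;y+By+Cy)=\langle x,By\rangle+\langle y,Bx\rangle+b_U(Cx,Cy).$$
So $\Gamma_A$ is isotropic exactly when the symmetric part of $B\in\Hom(L,L^\vee)$ equals $-\tfrac12 C^{*}b_UC$.

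The key step is the parametrization. Regard $C\in\Hom(L,U)$ and the antisymmetric part of $B$ as free coordinates. The isotropy condition then determines the symmetric part of $B$ uniquely, since $\mathrm{char}(k)\neq 2$. This gives an isomorphism between the open chart of $\OGr(\ell,q)$ and the affine space
$$\Hom(L,U)\times\wedge^2 L^\vee\simeq\bA^{\ell(\dim V-2\ell)+\binom{\ell}{2}}.$$
As a consistency check, this dimension equals $\dim\OGr(\ell,q)=\ell(\dim V-2\ell)+\ell(\ell-1)/2$.

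To conclude, the chart is a nonempty open subset, since it contains $L$, which corresponds to $A=0$. Because $\OGr(\ell,q)$ is smooth and geometrically irreducible when $2\ell<\dim V$, this open subset is dense, and so $\OGr(\ell,q)$ is rational over $k$. The main obstacle is this irreducibility claim. I would justify it through the transitive action of $\SO(V,q)$ over $\bar k$ on isotropic $\ell$-subspaces: the stabilizer of such a subspace meets both cosets of $\SO$ in $\OO$, because it contains reflections in anisotropic vectors of $L^\perp/L$ lifted to $V$. Alternatively, I would cite the standard fact that non-maximal isotropic Grassmannians are connected homogeneous varieties.
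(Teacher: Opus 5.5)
Your proof is correct, but it takes a different route from the paper's.

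\textbf{The paper's route.} The paper projects from $\bP(L)$ to get a dominant map $\OGr(\ell,q)\dashrightarrow\Gr(\ell,V/L)$. Over $L'\subset V/L$ with preimage $E$ of dimension $2\ell$, the fiber is an open subset of one component of the maximal isotropic Grassmannian of the non-degenerate form $q|_E$, which has $L$ as a tautological maximal isotropic subspace. Rationality over $k(\Gr(\ell,V/L))$ then comes from the second part of Proposition~\ref{prop:linalg}. For odd $\ell$ the relevant component is the one not containing $L$, so the remark after that proposition is needed as well.

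\textbf{Your route.} You produce one explicit chart. The splitting $V=L\oplus M\oplus U$ and the condition $B_{\mathrm{sym}}=-\tfrac12 C^*b_UC$ identify an open neighbourhood of $[L]$ in $\OGr(\ell,q)$ with the affine space $\Hom(L,U)\times\wedge^2L^\vee$. This is the graph argument from the first part of Proposition~\ref{prop:linalg} (your case $\dim U=1$), carried out for arbitrary $U$. Your computation also makes explicit that the symmetric part of $B$ is solved for in terms of $C$ rather than vanishing.

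\textbf{What each buys.} Your route gives three things:
\begin{itemize}
\item an open immersion of affine space, not merely a birational map;
\item no parity bookkeeping between components;
\item an explicit proof that $\OGr(\ell,q)$ is irreducible for $2\ell<\dim V$: reflections in anisotropic vectors of $U\neq 0$ make $\SO(V,q)$ act transitively.
\end{itemize}
The paper uses that irreducibility tacitly, since its fibration likewise only sees the open set of subspaces meeting $L$ trivially. The paper's route, in turn, globalizes. The same projection, done over a base as a tower of Grassmannian and split maximal-isotropic bundles, is what proves Proposition~\ref{prop:Wittdimred} and Corollary~\ref{coro:fixedtorat}. There one has only an equivariant map from a stably linearizable variety, not a single rational point.
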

\begin{proof}
Let $X=\{q=0\} \subset \bP(V)$ and 
$P=\bP(L) \subset X$ be the subspace
associated with the $k$-rational point
$[L] \in \OGr(\ell,q)$. Projection
from $P$ gives a dominant rational map
$$\pi:\OGr(\ell,q) \dashrightarrow \Gr(\ell,V/L).$$
Given $L'\subset V/L$ of dimension $\ell$, the induced
extension 
$$0 \ra L \ra E \ra L'\ra 0$$
has dimension $2\ell$. The restriction $Q=q|E$ 
is a quadratic form of rank $2\ell$, with tautological maximal isotropic subspace $L$.  
The generic fiber of $\pi$ is isomorphic to $\OGr(\ell,Q)_{\circ}$ 
over the function field $k(\Gr(\ell,V/L))$. 
Here $\circ$ denotes 
one component of the orthogonal Grassmannian --
the one parametrizing subspaces intersecting $L$
in a subspace of even dimension.
This contains $L$ if and only if $\ell$ is even.

The second part of Proposition~\ref{prop:linalg}
implies that $\OGr(\ell,q)$ is rational
over the function field of $\Gr(\ell,V/L)$,
hence rational over $k$.
\end{proof}
Subspaces of dimension $\ell$ contains
subspaces of smaller dimensions, thus we have:
\begin{coro} \label{coro:ratsmalldim}
Retain the notation of Proposition~\ref{prop:pointtorat}.
If $\OGr(\ell,q)$ has a $k$-rational point 
then $\OGr(\ell',q)$ is rational, or a disjoint union of two
rational components, for each $\ell' \le \ell$.
\end{coro}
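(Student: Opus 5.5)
The plan is to reduce the statement for $\ell'$ to Proposition~\ref{prop:pointtorat} and, in one boundary case, to Proposition~\ref{prop:linalg}. Let $[L]\in\OGr(\ell,q)(k)$. For $\ell'\le\ell$, pick any $k$-subspace $L'\subset L$ of dimension $\ell'$. Then $L'$ is isotropic, since $q|L=0$, so $[L']$ is a $k$-rational point of $\OGr(\ell',q)$. We also have $2\ell'\le 2\ell<\dim(V)$. So the pair $(q,\ell')$ satisfies the hypotheses of Proposition~\ref{prop:pointtorat}, and $\OGr(\ell',q)$ is rational over $k$.

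In this regime, where $2\ell'<\dim V$, the isotropic Grassmannian is geometrically irreducible. Hence the conclusion is the first alternative, with no further case split. I would still check that Proposition~\ref{prop:pointtorat} is applied correctly when $\ell'$ is small. For $\ell'=1$ the space $\OGr(1,q)$ is the quadric itself, and the projection in that proof goes to $\Gr(1,V/L)=\bP(V/L)$. Its generic fiber is $\OGr(1,Q)_\circ$ for a binary form $Q$ with an isotropic line, which is a rational point. This recovers the classical stereographic projection, so the argument is uniform in $\ell'\ge 1$.

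The phrase ``or a disjoint union of two rational components'' must be meant for the variant in which $\ell$ is allowed to reach $\dim(V)/2$, with $\dim V=2m$ and $\ell'=\ell=m$. There the plan is different. A rational point $L$ of $\OGr(m,q)$ lies in one component, and the second part of Proposition~\ref{prop:linalg} makes that component rational.

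For the other component I would argue in one of two ways. If the discriminant is trivial, both components are defined over $k$, and Proposition~\ref{prop:vq} gives an isomorphism between them, so both are rational. The existence of a maximal isotropic $L$ forces $q$ to be hyperbolic, which automatically gives trivial discriminant. Alternatively, the remark following Proposition~\ref{prop:linalg} handles this by swapping $L$ and $M$ when $m$ is odd, and Proposition~\ref{prop:vq} covers even $m$.

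For $\ell'<m$ in that variant, I fall back on the first paragraph, since $2\ell'<\dim V$. The only subtle point is this bookkeeping: making sure the correct alternative is assigned in each case, and confirming that hyperbolicity gives trivial discriminant. I expect no genuine obstacle.
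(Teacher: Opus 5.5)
Your proposal is correct and is essentially the paper's argument: the paper justifies the corollary by noting that an $\ell$-dimensional isotropic subspace defined over $k$ contains $\ell'$-dimensional ones, which are $k$-points of $\OGr(\ell',q)$, and then applying Proposition~\ref{prop:pointtorat}. Your extra treatment of the boundary case $\ell'=\ell=\dim(V)/2$ via Propositions~\ref{prop:linalg} and~\ref{prop:vq} is sound. It explains the ``two components'' clause, which is vacuous under the retained hypothesis $2\ell<\dim(V)$.
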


\subsection*{Witt rings over fields}
Consider non-degenerate quadratic forms $(V,q)$ over $k$,
under the operations of direct sum and tensor product.  
Recall that a quadratic form is {\em hyperbolic} if it may be
expressed as a direct sum of two-dimensional vector spaces
with hyperbolic form $xy$. Recall fundamental results of 
Witt: Every non-degenerate form admits an expression
$$
(V,q) = (V_{an},q_{an}) \oplus_{\perp} H,
$$
where the first summand is {\em anisotropic} -- has no 
non-trivial isotropic subspace -- and the second summand is hyperbolic.
Moreover, the anisotropic summand is unique up to isomorphism;
indeed, if $r$ is the maximal dimension of an isotropic subspace
of $q$ then $H$ is a direct sum of $r$ hyperbolic planes.  
The results above (like Corollary~\ref{coro:ratsmalldim})
imply that {\em all} such
decompositions are parametrized by a rational variety or the 
disjoint union of two rational varieties.

For any non-degenerate quadratic form $(V,q)$, the direct sum
$$(V,q) \oplus_{\perp} (V,-q)$$
is hyperbolic, with the diagonal subspace isotropic. 
The tensor product of any form with a hyperbolic form is hyperbolic.
\begin{defi} \cite[\S 2]{EKMbook}
\label{defn:wittring}
The {\em Witt ring} $\Witt(k)$ is the collection of isomorphism
classes of non-degenerate
quadratic forms under direct sum and tensor product, modulo the 
ideal of hyperbolic forms. 
\end{defi}
Each non-zero element of $\Witt(k)$ is represented by an anisotropic quadratic form. 

The {\em augmentation ideal} $I \subset \Witt(k)$ consists of all
the quadratic forms of even dimension. The associated graded ring
$$\Witt(k)/I  \oplus I/I^2 \oplus I^2/I^3 \oplus \cdots $$
encodes many key invariants: The discriminant of an even-rank 
form is captured by $I/I^2$ and the Clifford invariant by $I^2/I^3$.

\section{Compactifications}
\label{sect:compactify}
Let $(V,q)$ be a non-degenerate quadratic form
over $k$ with $\dim(V)=2m>0$. 
The projectivization of $\SO(V,q) \subset \End(V)$ 
is the {\em projective orthogonal group}
$$\PSO(V,q) \subset \bP(\End(V)).$$
It is the adjoint form of the special orthogonal group.
We interpret the similitude character 
$$\lambda \in \Gamma(\cO_{\PSO(V,q)}(2)),$$
as it is a homogeneous quadratic form in the matrix
entries. 

\subsection*{Geometric normal forms}
We sketch the geometry of closure of the orthogonal 
group, assuming $k$ is algebraically closed. 
More formal descriptions will follow after
we introduce the wonderful compactification.

Since $q$ is hyperbolic, we may choose coordinates $$x_{1'},\ldots,x_{m'},x_{1''},\ldots,x_{m]]}$$ such that
$$q:=x_{1'}x_{1''}+\cdots+x_{m'}x_{m''}.$$
The maximal torus of $\bG_m \times \SO(V,q)$ acts by
$$(x_{1'},\ldots,x_{m'},x_{1''},\ldots,x_{m''}) \mapsto
(t_1x_{1'},\ldots,t_mx_{m'},\lambda t_1^{-1} x_{1''},\ldots, \lambda t_m^{-1} x_{m''}).$$
There is a natural homomorphism
$$\bG_m \times \SO(V,q) \ra \GO(V,q).$$
The Weyl group $\mathrm W(\mathsf D_m)$ acts on the maximal torus of
$\SO(V,q)$ via signed permutations, exchanging the indices
$\{1,\ldots,m\}$ and inverting various $t_i$. 

The eigenvectors of a generic semisimple element of $\SO(V,q)$ determines a
collection of $2^{m-1}$ decompositions into complementary isotropic subspaces
\begin{align*}
V &= W' \oplus W'',\\
   &W'=\{x_{1''}= \cdots = x_{m''}=0\}, \quad 
    W''=\{x_{1'}=\cdots = x_{m'}=0\}.
\end{align*}
Their orbits under $\mathrm W(\mathsf D_m)$ depends on the parity of $m$.
For odd $m$, $W'$ and $W''$ come from distinct varieties
$\OGr(m,q)_+$ and $\OGr(m,q)_-$, thus each decomposition admits
an action by the discriminant. The group $C_2^{m-1} \subset
\mathrm W(\mathsf D_m)$ acts simply transitively on the collection.
For even $m$, $W'$ and $W''$ come from the same component
of the variety of maximal isotropic subspaces, and 
$C_2^{m-1}$ has two orbits.

We consider non-zero limiting elements in the closure 
$$\PSO(V,q) \subset \oPSO(V,q) \subset \bP(\End(V));$$
these may be understood as where $\lambda =0$. 
Such elements are conjugate to
$$(x_{1'},\ldots,x_{m'},x_{1''},\ldots,x_{m''}) \mapsto
(t_1x_{1'},\ldots,t_jx_{j'},0, \ldots, 0),$$
for $j=1,\ldots,m$.
In other words, up to the action of the Weyl group 
and exchanging $x_{i'}$ and $x_{i''}$, these are where 
$$\lambda=t_{j+1}=\cdots=t_m=0$$
and
$$x_{(j+1)'}=\cdots = x_{m'}=x_{1''}=\cdots=x_{m''}=0.$$
These have rank $\le m$ and we describe those of rank $m$:

\begin{prop} \label{prop:Dm-1Dm}
The variety of rank-$m$ elements 
$$\{A \in \oPSO(V,q): \operatorname{rank}(A)=m \}$$
has two irreducible components, of dimension 
$$2m^2-m-1=\dim(\PSO(V,q))-1,$$ 
described as follows:
Generic elements $A$ correspond
to decompositions into isotropic subspaces
$$V=W' \oplus W''$$
invariant under $A$,
where $A|W'=0$ and $A|W''$ is invertible
(resp.~$A|W''=0$ and $A|W'$ is invertible).

If $m$ is odd then $W'$ and $W''$ come from distinct 
components $\OGr(m,q)_{\pm}$; when $m$ is even they come from
same component of the maximal isotropic Grassmannian. 
The resulting divisors are $\PGL_m$-bundles over
$$\OGr(m,q)_+ \times \OGr(m,q)_-,$$
when $m$ is odd, and over
$$\OGr(m,q)_+ \times \OGr(m,q)_+ \text{ and }
\OGr(m,q)_- \times \OGr(m,q)_-$$
when $m$ is even.
\end{prop}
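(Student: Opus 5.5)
Work over an algebraically closed field with $q=\sum_i x_{i'}x_{i''}$, as in the geometric normal forms above. The plan is to describe the rank-$m$ locus in $\oPSO(V,q)$ directly, by building explicit families of limits of $\SO(V,q)$. We then show that every rank-$m$ element lies in the closure of one of these families, and finally count dimensions.

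\emph{Construction.} Fix a decomposition $V=W'\oplus W''$ into complementary maximal isotropic subspaces. The form identifies $W''\simeq (W')^{\vee}$. Take any $B\in\GL(W'')$ and let $B^{-\vee}$ denote its inverse transpose acting on $W'$. Then $g_t=tB^{-\vee}\oplus B$ is a similitude with multiplier $\lambda(g_t)=t$. Its projective class equals that of $\sqrt{t}^{-1}g_t$, so it lies in $\PSO(V,q)$ once we choose the correct coset; restricting to one component of $\GO$ if necessary, the limit as $t\to0$ is $A=0\oplus B$. This $A$ has rank $m$, kills $W'$, and is invertible on $W''$. Exchanging the roles of $W'$ and $W''$ gives the second family.

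Every such $A$ determines the pair $(W',W'')=(\ker A,\operatorname{im}A)$ together with $[B]\in\PGL(W'')$. The data $(W',W'',[B])$ form a $\PGL_m$-bundle over the open set of transverse pairs in $\OGr(m,q)\times\OGr(m,q)$. By the second part of Proposition~\ref{prop:linalg}, such transverse complements form an open dense subset of a single component $\OGr(m,q)_{\pm}$ for the second factor. For $m$ odd the complement lies in the opposite component; for $m$ even it lies in the same component as $W'$. This is exactly the parity statement used in the proof of Proposition~\ref{prop:linalg}.

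\emph{Dimension count.} The base has dimension $2\cdot m(m-1)/2=m^2-m$ and the fiber has dimension $m^2-1$, giving $2m^2-m-1$. Since $\dim\PSO(V,q)=\binom{2m}{2}=2m^2-m$, these loci are divisors.

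\emph{Irreducible components.} Each family is irreducible because it is a $\PGL_m$-bundle over an irreducible base.
\begin{itemize}
\item For $m$ odd, the two families are (kernel in $\OGr_+$, image in $\OGr_-$) and the reverse.
\item For $m$ even, they are indexed by whether both kernel and image lie in $\OGr_+$ or both in $\OGr_-$.
\end{itemize}
This explains the two components in the statement. One has to check that ``kernel'' versus ``image'' indeed matches the two families in the way claimed, via the action of the reflection and $\mathrm W(\mathsf D_m)$.

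\emph{Main obstacle: exhaustiveness.} We must show that every rank-$m$ element of $\oPSO(V,q)$ lies in the closure of these families. Take a curve $g_t\in\SO$ converging projectively to $A$ of rank $m$. Using the orthogonality relation $g_t^{T}Jg_t=J$ and passing to the limit, we get $A^{T}JA=0$; that is, $\operatorname{im}A$ is isotropic. Applying the same argument to $g_t^{-1}$, which is proportional to the adjugate of $g_t$, shows that $\ker A$ is isotropic as well. Since the rank is $m$, both are maximal isotropic subspaces. For a generic $A$ we expect $\ker A\cap\operatorname{im}A=0$, and then $A$ has the normal form above. To make this rigorous I would use the Cartan decomposition / valuative criterion: write $g_t=k_1(t)\,\mathrm{diag}(t^{a_i},t^{-a_i})\,k_2(t)$, and read off the rank-$m$ limits as those with all $a_i$ equal. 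This reduces everything to the torus normal forms already listed, giving $2\cdot 2^{m-1}$ choices of sign pattern that group into the two orbits described above.
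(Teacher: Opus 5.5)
Your proposal is correct. The paper gives no separate proof of this statement. It reads it off from the torus normal forms sketched just before: boundary points are said to be conjugate to $(t_1x_{1'},\dots,t_jx_{j'},0,\dots,0)$, and the parity claim comes from the orbits of $C_2^{m-1}\subset\mathrm W(\mathsf D_m)$ on eigenspace decompositions.

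You follow the same underlying idea but carry it out differently in three places:
\begin{itemize}
\item The two families come from the explicit degenerations $tB^{-\vee}\oplus B$. These already lie in the identity component, since $\det(t^{-1/2}g_t)=1$, so no coset adjustment is needed.
\item The parity comes from the intersection-dimension criterion behind Proposition~\ref{prop:linalg}.
\item The reduction to the torus is justified by the Cartan decomposition.
\end{itemize}

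That last step gives more than you use. The limit is $k_1(0)Dk_2(0)$ with $k_i(0)\in\SO(V,q)$ and $D$ a torus normal form of rank $m$. In the $\mathsf D_m$ dominant chamber these come from the coweights $a(1,\dots,1,\pm1)$, not only from ``all $a_i$ equal''; the sign $-1$ produces the second component. Hence the rank-$m$ locus is exactly two $\SO(V,q)\times\SO(V,q)$-orbits.

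Each orbit consists of all $A$ whose kernel and image lie in a prescribed pair of components, with an arbitrary isomorphism $V/\ker A\to\operatorname{im}A$. So each orbit is a genuine $\PGL_m$-bundle over the whole product $\OGr(m,q)_a\times\OGr(m,q)_b$, not just over transverse pairs. This has three consequences:
\begin{itemize}
\item Your heuristic that generic $A$ has $\ker A\cap\operatorname{im}A=0$ becomes unnecessary.
\item Your separate isotropy argument via $g_t^{-1}$ also becomes unnecessary.
\item It accounts for the non-generic rank-$m$ elements whose kernel meets their image. These are two-sided translates of a torus normal form but not conjugates of one, so the paper's conjugacy phrasing does not reach them.
\end{itemize}

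One small technicality: lifting an arc in $\PSO(V,q)$ to $\SO(V,q)(k((t)))$ may require the base change $t\mapsto t^2$.
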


We close with two remarks about the strata of rank $<m$:
\begin{itemize}
\item{they have dimension strictly smaller than the divisorial
strata of Proposition~\ref{prop:Dm-1Dm};}
\item{for matrices $A$ of rank $<m$ the subspaces 
$W'$ and $W''$ are not uniquely determined by the matrix.}
\end{itemize}

\subsection*{Wonderful constructions}
We consider compactifications of $\PSO(V,q)$, bi-equivariant
under the action: 
\begin{align*}
\PSO(V,q)^2 \times \PSO(V,q) & \ra \PSO(V,q) \\
((g_1,g_2),g) & \mapsto g_1gg_2^{-1}.
\end{align*}
These were pioneered by De Concini-Procesi \cite{DP}
and subsequently extended to general fields \cite{Str,DS}.
Details on their Picard groups, from
the perspective of spherical varieties,
may be found in \cite{Brion} and \cite{BrionTotal}.

Consider the simply-connected cover of $\PSO(V,q)$ 
over an algebraic closure and fix positive and simple
roots. Let $\blambda$ denote a 
dominant weight with associated representation 
$V_{\blambda}$ and endomorphisms $\End(V_{\blambda})$. 
Let 
$$X_{\blambda}(V,q)\subset \bP(\End(V_{\blambda}))$$ 
denote the closure of the identity under the 
$\PSO(V,q) \times \PSO(V,q)$ action on 
$$\End(V_{\blambda})=V_{\blambda} \otimes V_{\blambda}^{\vee}.$$ 
When $\blambda$ is regular -- a positive sum of the
fundamental weights, in the interior of the Weyl chamber -- we obtain the {\em wonderful compactification} \cite[3.4]{DP}
$$\overline{X}(V,q):=X_{\blambda}(V,q) \supset\PSO(V,q).$$ 
For weights $\blambda'$ 
along the boundary of the Weyl chamber, we obtain bi-equivariant morphisms
$$X_{\blambda}(V,q) \ra X_{\blambda'}(V,q).$$
For $V_{\blambda'}=V$, the
$2m$-dimension standard representation,
the distinguished orbit in $\bP(\End(V))$ is the
projective similitude group. 
This yields an explicit iterated blowup of its closure
$$\beta: \overline{X}(V,q)\stackrel{\sim}{\longrightarrow} X_{\blambda'}(V,q)=\oPSO(V,q) \subset \bP(\End(V)).$$  

We record standard facts about $\oX(V,q)$ over
algebraically closed fields \cite[3.1]{DP}:
\begin{itemize}
\item{$\oX(V,q)$ is smooth and projective;}
\item{the boundary $\oX(V,q) \setminus \PSO(V,q)$ is 
a normal crossings divisor with irreducible components $D_i$ indexed by the simple roots and strata/orbits indexed by subsets of those roots;}
\item{the minimal stratum $Y \simeq (\PSO(V,q)/B)^2$, 
the product of complete isotropic flag varieties.}
\end{itemize}
Furthermore, analyzing the restriction homomorphism
$$\Pic(\oX(V,q)) \ra \Pic(\PSO(V,q)/B)^2,$$
we have \cite[{\S}8]{DP} and \cite[2.1.3]{BrionTotal}:
\begin{itemize}
\item{$\Pic(\oX(V,q))$ is isomorphic to the character group $\Lambda$ of the simply-connected cover of $\PSO(V,q)$, and is realized as the graph of an explicit isomorphism $$\Pic(\PSO(V,q)/B)\stackrel{\sim}{\ra}\Pic(\PSO(V,q)/B).$$}
\item{The Picard group is freely generated by the ``color'' divisors
$C_1,\ldots,C_m$, closures of codimension-one components of $$\PSO(V,q) \setminus (B^-\times_T B),$$ 
where $B^-$ is the ``opposite'' Borel with $B^-\cap B=T$ the maximal torus. The color divisors are indexed by fundamental
weights $\omega_1,\ldots,\omega_m$ of the simply-connected cover of $\PSO(V,q)$ \cite[2.2.4]{BrionTotal}.}
\item{Nef divisors are non-negative linear
combinations of $C_1,\ldots,C_m$ \cite[2.6]{Brion}.}
\item{The cone (but not necessarily the monoid) of effective divisors is generated by
$D_1,\ldots,D_m$, realized as simple roots
\cite[2.3.5]{BrionTotal}.}
\end{itemize}

Given a weight $\bmu\in \Lambda$, let $L_{\bmu}$ be the associated line bundle on $\oX(V,q)$. 
Brion \cite[3.2.4]{BrionTotal} computes the total coordinate ring 
$$
R(\oX(V,q))=\oplus_{\bmu\in \Lambda} \Gamma(\oX(V,q),L_{\bmu}).
$$
For our purposes, it suffices to mention that for a dominant weight $\blambda$, there is distinguished summand
$$\End(V_{\blambda}) \subset \Gamma(\oX(V,q),L_{\blambda})$$
associated with the morphism 
$$\oX(V,q) \ra X_{\blambda}(V,q) \subset \bP(\End(V_{\blambda})).$$
In particular, we can compute the image of $\beta^*$
on the Picard group.

We explore the geometry of the boundary divisors under 
$\beta$. The simple roots $\alpha_1,\ldots,\alpha_{m-1},\alpha_m$
may be ordered so that the first $m-2$ are the strand
of the Dynkin diagram and $\alpha_{m-1},\alpha_m$ are the
two nodes at the end, exchanged by the involution of the
diagram. (This assignment is ambiguous for $m=4$ due to triality.)

\begin{prop} \label{prop:excmulttwo}
The divisors $D_1,\ldots,D_{m-2}$ are exceptional for 
$\beta$. 
The divisors $D_{m-1}$ and $D_m$ correspond to projections
onto pairs of maximal isotropic subspaces 
$$\OGr(m,q)_+ \times \OGr(m,q)_- \text{ (odd $m$)} \ \text{ or } \ \OGr(m,q)_{\pm}^2 \text{ (even $m$)}$$ 
composed with isomorphisms of one of those subspaces.
The pull back of the similitude divisor $\{\lambda=0\}$
under $\beta$ has class
$$2(D_1+\cdots+D_{m-2})+D_{m-1}+D_m,$$
so the exceptional divisors all appear with multiplicity two.
\end{prop}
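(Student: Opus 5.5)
We work over an algebraic closure, since all three assertions are geometric. We use the Picard description recalled above: $\Pic(\oX(V,q))\simeq\Lambda$, with colors $C_i\leftrightarrow\omega_i$ and boundary divisors $D_i\leftrightarrow\alpha_i$. Write $\omega_1$ for the weight of the standard representation $V$. Then $\beta$ is the morphism associated with $L_{\omega_1}$, and $\beta^*\cO(1)=L_{\omega_1}=C_1$.

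\emph{Step 1: which $D_i$ are contracted.} By De Concini--Procesi, the morphism $\oX(V,q)\to X_{\blambda'}(V,q)$ attached to a dominant weight $\blambda'$ contracts exactly those boundary divisors $D_i$ whose simple roots lie outside the support of $\blambda'$. Here the support is the set of simple roots $\alpha$ with $\langle\blambda',\alpha^\vee\rangle\neq0$, enlarged by the connected components of the Dynkin diagram meeting it in the sense of the standard ``$\blambda'$-connected'' criterion. A cleaner check is to compare with the normal forms of Proposition~\ref{prop:Dm-1Dm}. The boundary of $\oPSO(V,q)$ has exactly two divisorial components, namely the two families of rank-$m$ limits, while the rank $<m$ strata have smaller dimension. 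Hence exactly two $D_i$ map onto divisors, and the remaining $m-2$ are $\beta$-exceptional.

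To see that the surviving pair is $D_{m-1},D_m$, we use the diagram involution. It exchanges the two rank-$m$ components, which differ by swapping the roles of the $\OGr(m,q)_\pm$ factors, and it also exchanges $\alpha_{m-1}$ and $\alpha_m$. Moreover, the generic point of $D_i$ has Levi type obtained by deleting $\alpha_i$. Deleting $\alpha_{m-1}$ or $\alpha_m$ gives the parabolic stabilizing a maximal isotropic subspace, so the corresponding orbit is $G/P\times G/P$ fibered by the Levi $\GL_m/\bG_m=\PGL_m$. This matches the $\PGL_m$-bundles over pairs of maximal isotropic Grassmannians in Proposition~\ref{prop:Dm-1Dm}, and it gives the description of $D_{m-1},D_m$ as projection onto a pair $(W',W'')$ composed with an isomorphism $W''\to W''$.

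\emph{Step 2: the class of $\beta^*\{\lambda=0\}$.} The function $\lambda$ is a quadratic form in the matrix entries, so $\beta^*\{\lambda=0\}\in|2C_1|=|L_{2\omega_1}|$. Its support is the whole boundary, because $\lambda\neq0$ on $\PSO(V,q)$ and $\lambda=0$ on every limiting element. So we write $\beta^*\{\lambda=0\}=\sum a_iD_i$ and compare classes in $\Lambda$. In $D_m$ notation,
$$\omega_1=\alpha_1+\cdots+\alpha_{m-2}+\tfrac12(\alpha_{m-1}+\alpha_m),$$
hence
$$2\omega_1=2(\alpha_1+\cdots+\alpha_{m-2})+\alpha_{m-1}+\alpha_m.$$
Since the $\alpha_i$ are linearly independent in $\Lambda\otimes\mathbb{Q}$, the coefficients are determined: $a_i=2$ for $i\le m-2$ and $a_{m-1}=a_m=1$. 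We also need that the identification $D_i\leftrightarrow\alpha_i$ of \cite[2.3.5]{BrionTotal} is exactly the class map. This holds because the $\bG_m$-weight of the boundary along $D_i$ is the simple root, by the local structure theorem of \cite{DP}.

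\emph{Main obstacle.} The subtle point is the sign and normalization convention in Step 2. One must check that the section $\lambda\in\Gamma(L_{2\omega_1})$ vanishes along $D_i$ to order exactly the $\alpha_i$-coefficient of $2\omega_1-0$; that is, $\lambda$ is the highest-weight-type section whose divisor is $\sum\langle 2\omega_1\rangle_{\alpha_i}D_i$. To confirm this, I would first run the local chart computation on the big cell $\bA^m\times(U^-\times U)$ of \cite{DP}. There $\lambda$ restricts to the torus character $t^{2\omega_1}$ written in the coordinates $t^{\alpha_i}$. As a sanity check, near a generic point of $D_{m-1}$ the normal form $\operatorname{diag}(t_1,\dots,t_m,\lambda t_1^{-1},\dots)$ shows that $\lambda$ vanishes to first order.
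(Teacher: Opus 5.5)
Your proposal is correct and follows essentially the paper's argument. You match $D_{m-1},D_m$ with the rank-$m$ strata of Proposition~\ref{prop:Dm-1Dm}, send the remaining $D_i$ to lower-rank loci, and read off the coefficients by expanding $2\omega_1=2L_1$ in simple roots. Your extra justifications (the $\SO(V,q)\times\SO(V,q)$-invariance and non-vanishing of $\lambda$ on $\PSO(V,q)$, which puts its divisor on the boundary, and the linear independence of the $\alpha_i$) only make explicit what the paper leaves to the recalled facts about effective and nef divisors.

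One caution: the De Concini--Procesi contraction criterion you quote first in Step~1 is garbled as stated. For $\blambda'=\omega_1$ it would not single out $D_{m-1},D_m$; in fact the generic point of $D_i$ for $i\le m-2$ maps to rank-$i$ matrices. The dimension-count and Levi-type argument you actually rely on is sound, however.
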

\begin{proof} The description of $D_{m-1}$ and $D_m$ are
given in Proposition~\ref{prop:Dm-1Dm}. The remaining
strata $D_1,\ldots,D_{m-2}$ correspond to lower-rank matrices.

In light of the descriptions of effective and nef divisors above,
the last assertion is a computation with root systems. We use the notation
of \cite[19.2]{FulHar} with fundamental weights
$$\omega_i = \begin{cases} L_1+\cdots+L_i & i\le m-2 \\
                        (L_1+\cdots+L_{m-1}-L_m)/2 & i=m-1\\
                        (L_1+\cdots+L_{m-1}+L_m)/2 & i=m
                        \end{cases}
$$
and simple roots
$$\alpha_i= L_i-L_{i+1}, \ i=1,\ldots,m-1, \quad
\alpha_m=L_{m-1}+L_m.$$
These correspond to the divisors $C_i$ and $D_i$
respectively. Thus we find
\begin{align*}
\beta^*\{\lambda = 0\} = 2L_1 =
     2(L_1-L_2)+ \cdots + 
        2(L_{m-2}-L_{m-1}) & \\
         + (L_{m-1}-L_m) + (L_{m-1}+L_m) & \\
        = 2D_1+ \cdots + 2D_{m-2} + D_{m-1} + D_m. &
\end{align*}
\end{proof}

\begin{rema}[Extension to orthogonal groups]
\label{rema:EOG}
Recall that the groups $\OO(V,q)$, $\GO(V,q)$, and the
projectivization $\PGO(V,q)$
have two connected components, distinguished by the
discriminant character $\disc$. 
Concretely, when $q$ is hyperbolic we can express
$$\OO(V,q) = \SO(V,q) \rtimes \left<\iota\right>$$
where $\iota$ is an involution exchanging $\{x_{i+},x_{i-}\}$
for some index $i$.  
The construction of $X_{\blambda}(V,q)$ yields compactifications of $\PGO(V,q)$ with two components isomorphic to $X_{\blambda}(V,q)$.
Thus $\PGO(V,q)$ admits a wonderful compactification 
consisting of two copies of $\oX(V,q)$. 
These are all equivariant for the $\PGO(V,q)\times \PGO(V,q)$
action 
$$((g_1,g_2),g) \mapsto g_1gg_2^{-1}.$$
\end{rema}

\section{Equivariant quadratic forms}
\label{sect:equiv-quad}

From now on, $k$ is algebraically closed of characteristic zero
unless specified otherwise.

Let $G$ be a finite group and consider a projective quadric hypersurface $X \subset \bP^n$ 
admitting a regular action of $G$.  
Using the extension
$$ 1 \rightarrow \bG_m \rightarrow \GL_{n+1} \rightarrow \PGL_{n+1} \rightarrow 1$$
we extract the following data:
\begin{itemize}
\item{an extension 
\begin{equation} \label{eqn:Amitsur} 1 \rightarrow \bG_m \rightarrow \wG \rightarrow G \rightarrow 1; \end{equation}}
\item{a linear representation of dimension $n+1$
$$\rho:\wG \rightarrow \GL(V)$$ 
inducing the $G$-action on $\bP^n=\bP(V)$;}
\item{a $G$-quadratic form, i.e., a $G$-invariant element $q \in \Sym^2(V^{\vee})\otimes L$, where $L$ is a one-dimensional
representation with associated character $\lambda:\wG\ra \bG_m$, such that
$$X= \{q=0\}, \quad
q(\rho(g)\cdot v)=\lambda(g) q(v).
$$
}
\end{itemize}
The obstruction to the extension (\ref{eqn:Amitsur}) splitting is a class
$$\alpha \in \rH^2(G,\bG_m)$$
known as the {\em Amitsur invariant}. If $\alpha\neq 0$ then $\bP^n$ and its $G$-subvarieties are not stably linearizable;
see \cite[\S 2]{BP}, \cite[App.A]{BCDP}, \cite[\S 2]{HTPAMQ}, and \cite[\S 3.5]{HTNagoya} for more context.
From now on, we impose the vanishing of this invariant:
\begin{defi}
A {\em $G$-quadric} is a quadric hypersurface $X\subset \bP(V)$,
where $\rho:G \ra \GL(V)$ is a linear representation and 
$X$ is invariant under the action of $G$.
\end{defi}

The $G$-quadratic form $q$ is equivalent to a $G$-linear isomorphism
$$s:V \lra V^{\vee}\otimes L,$$
symmetric in that it is unchanged on applying $\Hom(-,L)$,
via the functional relation
$$q(v) = s(v)\cdot v.$$
Thus these correspond to $G$-representations in the group of similitudes.
Write $\lambda:G \rightarrow \bG_m$ for the character associated with $L$.
\begin{exam}
Let $G=C_n=\left<\gamma\right> $ be a cyclic group of order $n$, and $V=\left<v_1,v_2\right>=\{xv_1+yv_2\}$ a
representation
$$\gamma\cdot x=\zeta x, \quad \gamma\cdot y=\zeta^a y, \quad \zeta=\exp(2\pi i/n),$$
with quadratic form $q=xy$ with $\lambda(\gamma)=\zeta^{a+1}$.  
\end{exam}

\begin{assu}
We will usually assume that $X$ is {\em smooth}, or equivalently,
$s:V \stackrel{\sim}{\lra} V^{\vee}\otimes L$
or $q$ is {\em non-degenerate}.  
\end{assu}

\begin{rema}
For any character $\chi:G \rightarrow \bG_m$, the representations $\rho$ and $\rho\otimes \chi$
yield the same projective representation $G \ra \PGL(V)$. 
Once the $G$-action on $X\subset \bP^n=\bP(V)$ is specified, $\rho$ is uniquely determined up to a character. 
\end{rema}

We define basic invariants of smooth $G$-quadrics $X\subset \bP(V)$. The {\em rank} of $X$ is $\dim(V)$,
which has the same parity as $\dim(X)$.

\subsection*{Similitude class}
Consider $V\otimes N$, where $N$ is a one-dimensional representation with character $\nu$. 
The resulting $(V\otimes N,q)$ with
$$q \in \Sym^2(V^{\vee})\otimes L = 
\Sym^2((V\otimes N)^{\vee})\otimes (N^2 \otimes L),$$
has character $\nu^{2} \lambda$.

The {\em similitude class} of $X=\{q=0\}, q\in \Sym^2(V^{\vee})\otimes L$ is the element
\begin{align*}
[\lambda] \in &\cok\left(\Hom(G,\bG_m) \stackrel{\cdot 2}{\lra} \Hom(G,\bG_m)\right) \\
             & = \ker\left( \rH^2(G,\mu_2)\lra \rH^2(G,\bG_m)\right),
\end{align*}
where $\lambda$ is the character associated with $L$.

The class vanishes if and only if there exists a representation $\rho:G \ra \GL(V)$ and an embedding $X\hookrightarrow \bP(V)$ such that
$X$ is defined by a $G$-invariant quadratic form $q \in \Sym^2(V^{\vee})$.

\subsection*{Discriminant}
Write $X=\{q=0\} \subset \bP(V)$, for $q\in \Sym^2(V^{\vee}) \otimes L$.
Taking determinants gives an isomorphism
$$\det(V)^2 \simeq L^{\dim(V)}.$$
When $\rank(X)$ is even, we obtain a character
$$\left(\det(V) \otimes L^{-\dim(V)/2}\right): G \ra \mu_2$$
and an invariant 
$$
\disc(X):=[\det(V) \otimes L^{-\dim(V)/2}] \in \Hom(G,\mu_2),
$$
the {\em $G$-discriminant} of $X$.  
The $G$-discriminant is independent of the choice of $V$ and $L$, i.e., unchanged under
$$V \mapsto W\otimes N, \quad L \mapsto M\otimes N^2.$$ 

\begin{rema} \label{rema:discmaxiso}
When $\dim(V)=2m$ and $q$ is non-degenerate then
the discriminant encodes the action of $G$ on
$\OGr(m,q)_{\pm}$ cf.~\cite[\S 85]{EKMbook}.
\end{rema}

\subsection*{Clifford invariant}
Let $X=\{q=0\} \subset \bP(V)$ be a smooth $G$-quadric with $\dim(V)=2m$ and trivial discriminant.
The embeddings
$$\OGr(m,q)_{\pm} \hookrightarrow \bP(S_{\pm})$$
induce projective representations of $G$.  
Let $\alpha_{\pm}\in \rH^2(G,\bG_m)[2]$ denote their Amitsur invariants, the obstructions
to lifting $S_{\pm}$ to linear representation of $G$, the {\em $G$-Clifford invariants}.

\begin{rema} Proposition~\ref{prop:spinAmitsur} shows that 
the classes $\alpha_{\pm}$ are related to the classical Clifford invariant of \cite[\S 14]{EKMbook}, defined as the 
Brauer class of $C^+(q)\simeq \End(S_+) \otimes \End(S_-)$ (see \cite[13.9]{EKMbook}). 
\end{rema}

\begin{rema}
Parimala and Srinivas \cite{ParSri} define 
cycle classes for Brauer algebras with 
involution that yield the
Clifford invariant as a special case.
There is even a refinement
with values in $\rH^2(G,\mu_2)$.
Such refinements do not exist over $I_2(X)$
-- quadratic forms of even rank and trivial 
discriminant -- for a general base $X$.   
\end{rema}

Let $X \subset \bP(V)$ be a smooth $G$-quadric with 
$\dim(V)=2m+1$. 

\begin{exam}
We give an example of a quadric threefold that is linearizable but has non-trivial Clifford invariant. 
Consider the dihedral group 
$$
\left<\sigma,\tau: \sigma^4=\tau^2=1, \tau\sigma\tau^{-1}=\sigma^3\right>
$$
with representation $V$
$$ \sigma \mapsto \left( \begin{matrix} i & 0 \\
                                        0 & i^3 
                            \end{matrix} \right), \quad 
    \tau \mapsto \left( \begin{matrix} 0 & 1 \\ 1 & 0 
                            \end{matrix} \right).
                            $$
This gives a projective representation of 
$$
C_2\times C_2 =  \left<\sigma, \tau: \sigma^2=\tau^2=1\right>.
$$
Let $C$ be the corresponding conic, realized in
$\bP(\Sym^2(V))$, with a non-trivial Amitsur invariant. Consider $\bP(\Sym^2(V) \oplus \mathbf{1})$ which is birational to a quadric threefold $X$ blown up at a fixed point $p$
$$ \Bl_C(\bP(\Sym^2(V) \oplus \mathbf{1})) \simeq
\Bl_p(X).$$
We blow down the proper transform of $\bP(\Sym^2(V))$
to get $p$. Thus $X=\{q=0\}$ for some $C_2\times C_2$-invariant quadratic form.  

The variety $\OGr(2,q)$ is a Brauer-Severi threefold, birational to a 
$\bP^2$-bundle over $C$. Thus Clifford invariants can
be non-trivial for quadrics that are linearizable.  
\end{exam}


\section{Invariant isotropic subspaces}
\label{sect:inv}

\subsection*{Definitions and basic properties}

Let $X=\{q=0\} \subset \bP(V)$ be a $G$-quadric, where $V$ is a linear representation of $G$.
Write $L$ for the one-dimensional representation associated to $q$ and
$$s:V \stackrel{\sim}{\lra} V^{\vee}\otimes L$$
the associated symmetric isomorphism of $G$-representations.
Let $W \subset V$ be a $G$-stable isotropic subspace.

\begin{exam}
The singular locus of $G$-invariant quadric hypersurface $X \subset \bP(V)$ is the
projectivization of an isotropic subspace.  
\end{exam}

\begin{defi}
A $G$-quadric $X=\{q=0\}\subset \bP(V)$ is {\em anisotropic} if 
there exists no sub-representation $0\neq W \subset V$ such that $q|W=0$. 
\end{defi}

We consider examples for regular representations:
\begin{exam}
Suppose that $G$ is an abelian $2$-group that is not $2$-elementary,
i.e., $2G \neq 0$. Then $V:=k[G]$ is isotropic.

We may as well assume that $G=\left<g\right>$ is cyclic of order
$2^n\ge 4$ with quadratic form
$$q=x_0^2+x_1x_{2^n-1}+\cdots + x_{2^{n-1}-1}x_{2^{n-1}+1}
+x_{2^{n-1}}^2,
$$
where the $x_i$ are characters of $G$.  
The subspace 
$$
W=\{x_0=x_1=\ldots = x_{2^{n-1}}=0\}
$$
is isotropic.  
\end{exam}

\begin{prop} \label{prop:getperp}
Retain the notation introduced above and assume that $W\subset V$ is irreducible and isotropic.
Then there exists an isotropic irreducible $W'\subset V$ such that the induced pairing on $W \times W'$
is non-degenerate. Moreover $W' \simeq W^{\vee}\otimes L.$
\end{prop}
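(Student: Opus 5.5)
The argument uses complete reducibility of representations of the finite group $G$ over $k$ (characteristic zero), together with the $G$-linear isomorphism $s:V\stackrel{\sim}{\lra}V^{\vee}\otimes L$. Composing $s$ with restriction to $W$ gives a surjective $G$-map
$$\pi: V \lra W^{\vee}\otimes L, \quad v\mapsto b(v,-)|_W .$$
Its kernel is $W^{\perp}$. Since $W$ is isotropic, $W\subset W^{\perp}$. By Maschke's theorem we may choose a $G$-stable complement $U$ to $W^{\perp}$ in $V$. Then $\pi|_U: U\stackrel{\sim}{\ra} W^{\vee}\otimes L$, so $U$ is irreducible and the pairing $W\times U\ra L$ is non-degenerate. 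The only problem is that $U$ need not be isotropic.

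We correct $U$ by a graph construction, as in the proof of Proposition~\ref{prop:linalg}. We look for $W'$ as the graph of a $G$-map $\phi:U\ra W$, namely $W'=\{u+\phi(u)\}$. Since $q|W=0$, we have
$$q(u+\phi(u)) = q(u) + 2b(u,\phi(u)).$$
Using the nondegenerate pairing $W\times U\ra L$, the symmetric bilinear form $b|_U \in \Sym^2(U^{\vee})\otimes L$ can be written as $b|_U(u,u')=b(\psi(u),u')$ for a unique linear $\psi:U\ra W$. Because $b|_U$ and the identification are $G$-equivariant, $\psi$ is $G$-equivariant. We set $\phi=-\tfrac12\psi$; this is where characteristic $\neq 2$ is used. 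Then
$$b(u+\phi u,u'+\phi u') = b(u,u') + b(\phi u,u')+b(u,\phi u') = b(u,u')-\tfrac12 b(u,u')-\tfrac12 b(u',u)=0,$$
so $W'$ is isotropic as a subspace, not merely on the diagonal.

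The subspace $W'$ is $G$-stable because $\phi$ is a $G$-map, and it is isomorphic to $U$, hence irreducible with $W'\simeq W^{\vee}\otimes L$. The pairing $W\times W'\ra L$ agrees with the pairing on $W\times U$, since $b(w,\phi u)=0$ for $w\in W$; hence it is non-degenerate.

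The only delicate point is checking the equivariance of $\psi$. The character $\lambda$ twists both sides, so the claim is that $\psi$ lies in $\Hom_G(U,W)$ with the $L$-twists cancelling. Note that $\psi$ may well be zero, for instance when $W\not\simeq U$; the construction covers that case as well.
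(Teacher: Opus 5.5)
Your proof is correct and follows the paper's route. The paper's $W''=s^{-1}(W^{\vee}\otimes L)$, built from an equivariant splitting of $V^{\vee}\to W^{\vee}$, is exactly a $G$-stable complement to $W^{\perp}$, and its ``row-and-column operations'' are your shear by a $G$-map into $W$. Your explicit $\phi=-\tfrac12\psi$ handles the isotropic and non-isotropic cases at once and avoids the paper's use of Schur's lemma to pin down the form on $W\oplus W''$ up to scalars, which needs $\operatorname{End}_G(W)=k$; so your version actually justifies the paper's remark that algebraic closure of $k$ is not needed.
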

Our proof does not require that $k$ be algebraically closed.
\begin{proof}
Consider the quotient 
$V^{\vee} \twoheadrightarrow W^{\vee}$
and choose a $G$-equivariant splitting, yielding a copy of $W^{\vee}\otimes L \subset V^{\vee}\otimes L$.  
Consider $W'':=s^{-1}(W^{\vee}\otimes L)$ and the induced $L$-valued bilinear form on 
$W \times W'' \subset V$. We already know this is isotropic on the first factor and
induces the natural $L$-valued bilinear pairing on $W \times W''$. It remains to compute this in
$W''$; we are done if it is isotropic.

Suppose $W''$ is {\em not} isotropic. Then, by Schur's Lemma, we would have a symmetric isomorphism
$$W'' \stackrel{\sim}{\lra} (W'')^{\vee}\otimes L,$$
or equivalently, a symmetric
$$t:W^{\vee} \otimes L \stackrel{\sim}{\lra} W.$$
Schur's Lemma guarantees this is the unique non-trivial homomorphism between these representations, up to scalars.  
Thus we have
$$W \oplus W'' \simeq W \oplus W.$$
with quadratic form
$$q(w_1,w_2) = c_1[t^{-1}(w_1)\cdot w_2 + t^{-1}(w_2)\cdot w_1] + c_2 [t^{-1}(w_2)\cdot w_2], \ w_1,w_2 \in W,$$
for some nonzero constants $c_1,c_2$.  Applying row-and-column operations, we obtain
$$W' \simeq W \hookrightarrow W \oplus W''$$
that is isotropic and pairs non-degenerately with the first factor.  
\end{proof}

\begin{defi} \label{defi:hyp}
Fix a character $\lambda:G \ra \bG_m$ with associated representation $L$. Given a $G$-representation $W$,
the {\em hyperbolic lattice}
$$H_W(\lambda) = W \oplus (W^{\vee}\otimes L),$$
with bilinear form 
$$b(w_1,f_1;w_2,f_2)=\frac{1}{2}(f_1(w_2)+ f_2(w_1)).$$ 
This is non-degenerate and the summands $W$ and $W^{\vee}\otimes L$
are isotropic subspaces.  
\end{defi}
Note that any hyperbolic lattice is a direct sum of hyperbolic lattices arising from irreducible representations.

\subsection*{Classification results}
Here we extend the analysis of \cite[\S 3]{HTEGLDQ}.
The base field $k$ is algebraically closed.  
\begin{prop} \label{prop:decomp}
Consider a smooth $G$-quadric
$$X=\{q=0 \} \subset \bP(V)$$
with associated symmetric
$s:V \stackrel{\sim}{\lra} V^{\vee} \otimes L$
with character $\lambda: G \ra \bG_m$.  
The irreducible representations $W \subset V$ are 
of three types:
\begin{enumerate}
\item{$W$ has a symmetric homomorphism
$W\stackrel{\sim}{\rightarrow} W^{\vee} \otimes L$.}
\item{$W$ has a skew-symmetric
$W\stackrel{\sim}{\rightarrow} W^{\vee} \otimes L$.}
\item{$W$ admits no such isomorphism.}
\end{enumerate}
The isotypic component $W^m \subset V$, where
$m$ is the multiplicity of $W$ in $V$, 
in each case takes the form:
\begin{enumerate}
\item{$q|W^m$ is non-degenerate and isomorphic to
$H_W(\lambda)^n$ if $m=2n$ and $H_W(\lambda)^n \oplus W$
if $m=2n+1$;}
\item{$m=2n$ and $q|W^{2n} \simeq H_W(\lambda)^n$;}
\item{$W^{\vee}\otimes L$ also appears in $V$ with 
multiplicity $m$ and
$$q|(W^m \oplus (W^{\vee}\otimes L)^m) \simeq 
H_W(\lambda)^m.$$}
\end{enumerate}
Moreover, these decompositions are unique up to 
automorphisms of $q$.  
\end{prop}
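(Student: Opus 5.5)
We work with the isotypic decomposition of $V$ and the isomorphism $s$. Since $s:V\stackrel{\sim}{\lra}V^{\vee}\otimes L$ is a $G$-isomorphism, it carries the $W$-isotypic component $W^m$ onto the $(W^{\vee}\otimes L)$-isotypic component of $V^{\vee}\otimes L$, which is the dual of the $(W^{\vee}\otimes L)$-isotypic component of $V$ twisted by $L$. Consequently the pairing $b$ restricted to two isotypic components $V_U\times V_{U'}$ vanishes unless $U'\simeq U^{\vee}\otimes L$, because it is a $G$-equivariant map $V_U\otimes V_{U'}\ra L$. This gives the trichotomy and the multiplicity statement in case (3): if $W\not\simeq W^{\vee}\otimes L$, then $W^m$ is totally isotropic. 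Non-degeneracy then forces $W^m$ to pair perfectly with the $(W^{\vee}\otimes L)$-component, which therefore has multiplicity $m$. In cases (1) and (2), $q|W^m$ is non-degenerate and orthogonal to all other isotypic components, so $V$ is an orthogonal sum of such blocks.

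Inside a block we use Schur's lemma. In cases (1)/(2), write $W^m=W\otimes k^m$ and fix the (unique up to scalar) isomorphism $t:W\ra W^{\vee}\otimes L$, symmetric or skew. Every $G$-invariant $L$-valued bilinear form on $W\otimes k^m$ has the form $t\otimes B$ for a bilinear form $B$ on the multiplicity space $k^m$. Symmetry of $b$ forces $B$ to be symmetric in case (1) and skew in case (2), and non-degeneracy of $b$ makes $B$ non-degenerate. Since $k$ is algebraically closed, we normalize $B$. In case (1), $B$ is a non-degenerate symmetric form, hence a sum of $n$ hyperbolic planes plus one line if $m$ is odd. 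In case (2), $B$ is symplectic, so $m=2n$ and $B$ is a sum of $n$ hyperbolic planes. Tensoring a hyperbolic plane of $k^m$ with $W$ yields exactly $H_W(\lambda)$, with isotropic summands $W$ and $W\simeq W^{\vee}\otimes L$. In case (3), the pairing $W^m\times(W^{\vee}\otimes L)^m\ra L$ is $\mathrm{ev}\otimes B$ with $B$ a perfect pairing $k^m\times k^m\ra k$. Change of basis on one side makes $B$ the identity, which gives $H_W(\lambda)^m$. Alternatively, one can induct using Proposition~\ref{prop:getperp}, splitting off one hyperbolic lattice at a time and passing to the orthogonal complement.

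Uniqueness up to automorphisms of $q$ follows from the same description. The automorphism group of $(V,q)$ contains $\prod \OO(B)$ or $\mathrm{Sp}(B)$ for the multiplicity spaces in cases (1)/(2), and $\GL_m$ acting on the pairs in case (3). Any two decompositions of the stated shape correspond to two normal-form bases of $(k^m,B)$, and these are related by such an automorphism (Witt's theorem for $B$ over $k$). The step I expect to need the most care is the reduction of invariant forms on $W\otimes k^m$ to $t\otimes B$, together with the verification that symmetry of $b$ translates into symmetry or skewness of $B$ according to the type of $t$. In case (1) this also requires checking that a non-degenerate symmetric $B$ can be brought to hyperbolic form, which uses that $-1$ is a square in $k$.
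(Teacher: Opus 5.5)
Your proposal is correct. It uses the same ingredients as the paper, Schur's lemma and normal forms of bilinear forms, but organizes them differently.

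\textbf{How the two routes differ.} The paper proceeds case by case and inductively. In cases (1) and (3) it splits off hyperbolic lattices one at a time via Proposition~\ref{prop:getperp}. In case (1) it also uses the fact that $(W,r)\oplus(W,cr)$ contains the isotropic copy $w\mapsto(\sqrt{-c}\,w,w)$ of $W$. Only in case (2) does it write $q|W^m$ as an $m\times m$ skew matrix of pairings $\wedge^2W\ra L$, which is exactly your $t\otimes B$ there. You apply the multiplicity-space reduction $t\otimes B$ (resp.\ $\mathrm{ev}\otimes B$) uniformly. All three cases then reduce to classifying symmetric, symplectic and perfect pairings on $k^m$, and the $\sqrt{-c}$ trick is replaced by the hyperbolic normal form of a symmetric form over an algebraically closed field.

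\textbf{What your route buys.} It supplies two things the paper's sketch leaves implicit.
\begin{itemize}
\item Orthogonality of non-dual isotypic components proves that $q|W^m$ is non-degenerate in cases (1)/(2), and that $W$ and $W^\vee\otimes L$ have equal multiplicity in case (3).
\item The embeddings of $\OO(B)$, $\mathrm{Sp}(B)$ and $\GL_m$ into $\OO(V,q)$ give an argument for the uniqueness clause, which the paper states without proof.
\end{itemize}

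\textbf{One step to state explicitly.} In the uniqueness step, say that every $G$-subrepresentation of $W\otimes k^m$ is $W\otimes U$ for a subspace $U\subset k^m$, since $\Hom_G(W,W\otimes k^m)\simeq k^m$. Decompositions of a block then correspond to orthogonal decompositions of $(k^m,B)$. A piecewise isometry between two such decompositions then suffices, so Witt's theorem is not needed.

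\textbf{A small slip.} Because $s$ is equivariant, it carries $W^m$ onto the $W$-isotypic component of $V^\vee\otimes L$, not its $(W^\vee\otimes L)$-isotypic one. Your ``which is'' clause names the correct subspace: the $L$-twisted dual of the $(W^\vee\otimes L)$-isotypic component of $V$. Nothing downstream is affected.
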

\begin{rema}
This should be understood as an equivariant 
analog of the Witt decomposition theorems. 
While the hyperbolic summands are not canonical they
are determined up to automorphisms.
The trichotomy of irreducbile orthogonal representations goes back to Frobenius and Schur \cite[p.~58]{Isaacs}.  
\end{rema}
\begin{proof}
The trichotomy -- that the three cases are disjoint --
follows from Schur's Lemma. The proof of Proposition~\ref{prop:getperp} guarantees the
existence of an irreducible summand pairing 
non-degenerately with $W$.  In the first case,
this is isomorphic to $W$; however,
observe that $W^{\oplus 2}$ always has an 
isotropic copy of $W$. 
Any sum
$$(W,r) \oplus (W, cr), \quad c\in k, r = q|W,$$
admits an isotropic subspace, i.e.,~the image of 
$w \mapsto (\sqrt{-c} \cdot w,w).$
For the second case,
$W$ is necessarily isotropic and the $q|W^m$ 
is an $m\times m$ skew-symmetric matrix $(a_{ij})$,
where the $a_{ij}$ are the skew pairings 
$\wedge^2 W \rightarrow L$.
For the parity of $m$, observe that skew-symmetric matrices
always have even rank. The reduction theory of 
symplectic forms implies that the matrix is equivalent to
$$\left( \begin{matrix} 0 & Ia \\
            -Ia & 0 \end{matrix} \right)$$
where the entries are an $n\times n$ identity matrix $I$
times a fixed $a:\wedge^2 W\stackrel{\sim}{\ra}L$. 
This is symmetric once we take into account that the 
isomorphisms are all skew!
In the third case, $W$ is
also isotropic and the claim follows 
directly from Proposition~\ref{prop:getperp}.
\end{proof}

We obtain the following consequences:
\begin{coro} \label{coro:anihyp}
Let $X=\{q=0\} \subset \bP(V)$ be a smooth $G$-quadric.  
We may express
$$V = V_a \oplus_{\perp} V_h$$
where $q|V_a$ is anisotropic and $q|V_h$ is hyperbolic.  
Write
$$X_a := \{q|V_a=0\} = X \cap \bP(V_a)$$
for the anisotropic section of $X$, which is 
determined up to isomorphism.
\end{coro}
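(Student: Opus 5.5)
The plan is to assemble the decomposition isotypic component by isotypic component, using Proposition~\ref{prop:decomp}. First, decompose $V$ into isotypic pieces and group them: for each irreducible $W$ of type (1) or (2), take $W^m$; for each $W$ of type (3), take $W^m\oplus (W^{\vee}\otimes L)^m$, grouping $W$ with its partner $W^{\vee}\otimes L$ (which is again of type (3)). Schur's Lemma shows that distinct groups are mutually orthogonal. Indeed, $s$ maps the $W$-isotypic part into the $(W^{\vee}\otimes L)$-isotypic part of $V^{\vee}\otimes L$, so the pairing between the $W$-isotypic component and the $U$-isotypic component vanishes unless $U\simeq W^{\vee}\otimes L$. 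Hence $V$ is an orthogonal direct sum of these groups, and $q$ is non-degenerate on each.

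Next, apply the normal forms of Proposition~\ref{prop:decomp} to each group:
\begin{itemize}
\item a type (2) component is $H_W(\lambda)^n$, hence hyperbolic;
\item a type (3) pair is $H_W(\lambda)^m$, hence hyperbolic;
\item a type (1) component is $H_W(\lambda)^n$ or $H_W(\lambda)^n\oplus W$.
\end{itemize}
Set $V_h$ to be the sum of all the hyperbolic lattices, and $V_a$ to be the sum of the leftover single copies $W$ of type (1) (with odd multiplicity). This gives $V=V_a\oplus_\perp V_h$.

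The main point is to check that $q|V_a$ is anisotropic. Suppose $0\neq U\subset V_a$ is an isotropic subrepresentation, and take an irreducible $W\subset U$. Each type (1) irreducible occurs in $V_a$ with multiplicity exactly one, so $W$ must equal that summand. But $q|W$ is non-degenerate, since it is given by the symmetric isomorphism $W\simeq W^{\vee}\otimes L$, so $W$ is not isotropic. This is a contradiction, and $V_a$ is anisotropic. For the same reason $V_a$ contains no type (2) or type (3) irreducibles, consistent with those always sitting in hyperbolic summands.

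Finally, uniqueness of $X_a$ up to isomorphism follows from the "moreover" clause of Proposition~\ref{prop:decomp}: the decompositions are unique up to automorphisms of $q$. Alternatively, $V_a$ is determined intrinsically as a representation: it is the sum of the type (1) irreducibles of odd multiplicity. By Schur, the restricted form on each such $W$ is determined up to a scalar, which is absorbed by rescaling since $k$ is algebraically closed. So $(V_a,q|V_a)$, and hence $X_a=X\cap\bP(V_a)$, is determined up to isomorphism. I expect the only delicate step to be the orthogonality of distinct groups. The key is to match each irreducible with its partner under $s$, and not with its dual.
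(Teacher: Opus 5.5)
Your proposal is correct and follows the paper's approach. The paper gives no written proof and states the corollary as an immediate consequence of Proposition~\ref{prop:decomp}; your argument (orthogonality of the grouped isotypic pieces via Schur, hyperbolic normal forms, anisotropy of the leftover multiplicity-one type (1) summands, and uniqueness via the ``moreover'' clause or the intrinsic description of $V_a$) is exactly the bookkeeping this implicitly requires, where for an arbitrary decomposition the intrinsic description also uses that a type (1) irreducible occurs with even multiplicity in any hyperbolic $V_h$, immediate from $H_W(\lambda)\simeq W\oplus W$ in that case.
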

Our next result extends \cite[Prop.~3.7]{HTEGLDQ}:
\begin{coro} \label{coro:struct}
Fix a character $\lambda:G \ra \bG_m$ with associated representation $L$.
Each anisotropic $X=\{q=0\} \subset \bP(V)$ with 
character $\lambda$
expressed uniquely as a sum
$$(V,q) \simeq \oplus_{i=1}^n (W_i,r_i)$$
where the $W_i$ are distinct irreducible representations of $G$ and
$r_i$ encodes a symmetric
$W_i \stackrel{\sim}{\lra} W_i^{\vee} \otimes L.$
\end{coro}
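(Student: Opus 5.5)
We read Corollary~\ref{coro:struct} off from Proposition~\ref{prop:decomp} by checking, type by type, which isotypic components can occur in an anisotropic $(V,q)$. Decompose $V$ into isotypic components $W^m$. The plan is to show that only type (1) components with multiplicity $m=1$ survive, and that these are mutually orthogonal.

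First, we exclude types (2) and (3). In both cases Proposition~\ref{prop:decomp} says $W$ itself is isotropic, so $W$ would be a nonzero isotropic subrepresentation. For type (2), a skew-symmetric isomorphism $W\to W^\vee\otimes L$ forces the restriction of the symmetric form to $W$ to vanish, by Schur's Lemma. For type (3), there is no nonzero map $W\to W^\vee\otimes L$ at all. Either way this contradicts anisotropy.

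Next, we treat type (1) with $m\ge 2$. The proof of Proposition~\ref{prop:decomp} shows that any sum $(W,r)\oplus(W,cr)$ contains the isotropic copy of $W$ given by the image of $w\mapsto(\sqrt{-c}\,w,w)$; this uses that $k$ is algebraically closed. More carefully, the form on $W^m$ is determined by a symmetric $m\times m$ matrix $C$ of scalars, via $\Hom_G(W,W^\vee\otimes L)=k\cdot r$. Since $k$ is algebraically closed, any symmetric matrix of size $m\ge 2$ has a nonzero isotropic vector $a\in k^m$. Then $w\mapsto (a_1w,\dots,a_mw)$ embeds an isotropic copy of $W$, again contradicting anisotropy. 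So every $W_i$ occurs with multiplicity one, and the $W_i$ are distinct.

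We then show the components are orthogonal and each restriction is non-degenerate. For distinct irreducibles $W_i\not\simeq W_j$ of type (1), the pairing $W_i\times W_j\to L$ corresponds to a map $W_i\to W_j^\vee\otimes L\simeq W_j$, which is zero by Schur's Lemma. Hence $V=\oplus_\perp W_i$. Non-degeneracy of $q$ then forces each $r_i=q|W_i$ to be non-degenerate; equivalently, $r_i$ is a nonzero, hence invertible, symmetric map $W_i\to W_i^\vee\otimes L$.

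Finally, uniqueness: the decomposition is the isotypic decomposition, so it is canonical as a set of subspaces. Each $r_i$ is the restriction of $q$, determined up to the scalar ambiguity absorbed by isometries of $W_i$ (rescaling by $\sqrt{c}$). The main point needing care is the multiplicity-one step, specifically that the form on $W^m$ factors through a symmetric scalar matrix. This follows from Schur's Lemma together with the symmetry of $r$.
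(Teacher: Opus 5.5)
Your proof is correct and follows the paper's route. The paper states this as an immediate consequence of Proposition~\ref{prop:decomp}: types (2) and (3), and type (1) with multiplicity $m\ge 2$, all contain hyperbolic (isotropic) pieces, which forces anisotropic forms to be orthogonal sums of distinct type (1) irreducibles of multiplicity one. Your symmetric scalar matrix $C$ argument for $m\ge 2$, and your explicit Schur orthogonality and uniqueness checks, spell out details the paper leaves implicit.
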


\begin{exam}[Regular representations]
Given a finite group $G$, let $V=k[G]$ be the regular
representations with coordinates 
$$\{x_g, g\in G\}, \quad x_g(g')=\delta_{g,g'}.$$
Consider the $G$-invariant non-degenerate quadratic form
$$q=\sum_{g\in G} x_g^2.$$
Its discriminant is the sign character for the permutation
representation of $G$ on itself.  

Now let $W$ be an irreducible representation of $G$ 
and consider the isotypic component
$$W^{\dim(W)} \subset V.$$
We apply Proposition~\ref{prop:decomp} and its corollaries:
The anisotropic part $V_a \subset V$ is the 
sum of irreducible self-dual representations of odd dimension,
each with multiplicity one. When $G$ is a $2$-group --
so all the irreducible representations satisfy
$\dim(W )=2^n$ -- 
$V_a$ is the direct sum of the characters $G \ra \mu_2$.
\end{exam}

\subsection*{Birational implications}

We recall birational properties 
of quadrics with invariant isotropic subspaces, following \cite[\S 3]{HTEGLDQ} and especially Theorem~3.5.
\begin{rema} \label{rema:isostab}
Suppose that $X\subset \bP(V)$ is a smooth $G$-quadric
of positive dimension.
\begin{itemize}
\item{If $X$ has a fixed point then it is linearizable;}
\item{If $X$ has an invariant isotropic subspace then the $G$-action on $X$
is stably linearizable.}
\end{itemize}
More precisely, suppose an invariant isotropic subspace $W$
yields
$$V = H_W(\lambda) \oplus H_W(\lambda)^{\perp}  = W \oplus (W^{\vee} \otimes L) \oplus H_W(\lambda)^{\perp}$$ 
via Proposition~\ref{prop:getperp} and Definition~\ref{defi:hyp}. If the $G$-action on
$$
\bP(W^{\vee} \oplus H_W(\lambda)^{\perp})
$$ 
is 
generically free then $X$ is linearizable.  
\end{rema}

\subsection*{Splitting and varieties 
of linear subspaces}
Let $V$ be a representation of a finite 
group $G$, $L$ a character of $G$, and
$X = \{q=0 \} \subset \bP(V)$
a smooth $G$-quadric arising from a 
symmetric $s:V \stackrel{\sim}{\ra} V^{\vee} \otimes L$. 

Let $\OGr(\ell, q)$ denote the 
variety of isotropic subspaces 
of dimension $\ell$ for $q$, 
with the induced $G$-action.
Remark~\ref{rema:weakvq} yields:
\begin{prop} \label{prop:maxstabrat}
Suppose that $\dim(V)=2m$ with trivial discriminant, so
that $\OGr(m,q)_{\pm}$ both have $G$-actions. Then they
are equivariantly stably birational.
\end{prop}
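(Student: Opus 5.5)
The strategy is to make Remark~\ref{rema:weakvq} equivariant: construct a $G$-equivariant birational map between $\OGr(m,q)_{\pm}\times V^{\vee}$ and a common variety, with $V^{\vee}$ carrying its linear $G$-action. This is exactly what equivariant stable birationality asks for. Throughout, the relevant $\bP(V^\vee)$-coordinates are linear representations; we stabilize by the representation $V^{\vee}$ itself, or by $V^{\vee}\otimes\chi$ for a suitable character $\chi$ if the form requires it.

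First I will define the incidence variety
$$Z=\{(\Lambda,W): W\subset V \text{ a hyperplane},\ \Lambda\in \OGr(m-1,q|W)\}.$$
This is the relative orthogonal Grassmannian of the family of hyperplane sections. I will work over the open set of hyperplanes $W$ with $q|W$ non-degenerate; this is a $G$-stable dense open set, since degeneracy is a $G$-invariant condition. Choosing the hyperplane via a nonzero functional $f\in V^{\vee}$, I obtain a $G$-equivariant variety $Z'$ fibered over an open subset of $V^{\vee}$.

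Next I will define maps $\phi_\pm:\OGr(m,q)_\pm\times V^{\vee}\dashrightarrow Z'$ by $(\Lambda_m,f)\mapsto(\Lambda_m\cap\ker f,f)$. These maps are manifestly $G$-equivariant, because $G$ acts diagonally and intersection commutes with the action. To see that $\phi_\pm$ is birational, I will invert it. If $q|W$ is non-degenerate of odd dimension $2m-1$ and $\Lambda\subset W$ is isotropic of dimension $m-1$, then $\Lambda^{\perp}/\Lambda$ in $V$ is a $2$-dimensional non-degenerate space, hence hyperbolic, and it contains exactly two isotropic lines. The corresponding two maximal isotropic subspaces containing $\Lambda$ lie in opposite components $\OGr(m,q)_+$ and $\OGr(m,q)_-$, since they meet in codimension one. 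So each component contributes exactly one lift, and these lifts depend algebraically on $(\Lambda,f)$. This identifies both $\OGr(m,q)_+\times V^{\vee}$ and $\OGr(m,q)_-\times V^{\vee}$ birationally and equivariantly with $Z'$. Here I use the hypothesis of trivial discriminant: it guarantees that $G$ preserves each component, so both lifts are individually $G$-equivariant. The generic compatibility $\Lambda_m\cap\ker f$ of dimension $m-1$ holds because $\Lambda_m\not\subset\ker f$ for generic $f$.

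Composing gives an equivariant birational map $\OGr(m,q)_+\times V^{\vee}\dashrightarrow\OGr(m,q)_-\times V^{\vee}$, which is the definition of equivariant stable birationality. The main point to check is that the lift is well defined over a dense $G$-stable open set, namely that $q|W$ is non-degenerate for generic $W$ and that $\Lambda^\perp/\Lambda$ is non-degenerate there. This is routine linear algebra and needs no rational points, since $k$ is algebraically closed and everything is geometric. No generic freeness hypothesis is needed, because the definition allows products with arbitrary representations.
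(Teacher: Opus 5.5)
Your argument is correct and follows the paper's route. The proposition is deduced from Remark~\ref{rema:weakvq}, which parametrizes hyperplanes $W=\ker f$ by $f\in V^{\vee}$ and identifies each $\OGr(m,q)_{\pm}\times V^{\vee}$ equivariantly and birationally with the relative $\OGr(m-1,q|W)$ via $\Lambda_m\mapsto \Lambda_m\cap W$. You also make explicit what the paper leaves implicit: the inverse, given by the two isotropic lines of the hyperbolic plane $\Lambda^{\perp}/\Lambda$ with one lift in each component, and the place where trivial discriminant is used.
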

This is weaker than Proposition~\ref{prop:vq} but
all we can expect in the equivariant context:
\begin{exam}
Let $G=C_5$ act on $\bP^1_{u,v}$ and $\bP^1_{x,y}$
\begin{align*}
[u,v] &\mapsto [\zeta u, \zeta^4 v] \\
[x,y] &\mapsto [\zeta^2 x, \zeta^3 y]
\end{align*}
where $\zeta$ is a primitive fifth root of unity.
These are not isomorphic as $G$-varieties.
Consider the product
$$X = \bP^1_{u,v} \times \bP^1_{x,y} \subset \bP(V)$$
where $V$ is the tensor product of the two-dimensional representations
above, with weights $\{\zeta^3,\zeta^4,\zeta,\zeta^2\}$.
The variety of lines
$$\OGr(2,q) = \bP^1_{u,v}\sqcup \bP^1_{x,y}$$
hence the $\OGr(2,q)_{\pm}$ are not isomorphic $C_5$-varieties. They are stably birational by the No Name Lemma.
\end{exam}

Our next result generalizes Corollary~\ref{coro:ratsmalldim}:
\begin{prop} \label{prop:Wittdimred}
Let $X=\{q=0\}\subset \bP(V)$ be a $G$-quadric and assume $G$
acts generically freely on $\OGr(\ell,q)$. For each $\ell'<\ell$, 
$\OGr(\ell',q) \times \OGr(\ell,q)$
is stably linearizable over $\OGr(\ell,q)$
\end{prop}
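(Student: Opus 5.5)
The plan is to pass to the generic point of $\OGr(\ell,q)$ and reduce to Corollary~\ref{coro:ratsmalldim} over a non-closed field, using the Duncan--Reichstein bridge of Proposition~\ref{prop:dr}. The projection $\OGr(\ell',q)\times\OGr(\ell,q)\to\OGr(\ell,q)$ is $G$-equivariant. Since $G$ acts generically freely on the base, we can replace it by a dense invariant open $U$ on which $G$ acts freely, and work with the quotient $U/G$ and its function field $K=k(\OGr(\ell,q))^G$. The statement ``stably linearizable over $\OGr(\ell,q)$'' then means: there is a $G$-equivariant birational map over $\OGr(\ell,q)$ between $\OGr(\ell',q)\times\OGr(\ell,q)\times\bA^d$ and a $G$-equivariant vector bundle over $\OGr(\ell,q)$. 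By descent along the free quotient $U\to U/G$, this is equivalent to showing that the descended form $\OGr(\ell',q)_K$, the twist of $\OGr(\ell',q)$ by the torsor $U\to U/G$ at the generic point, is stably rational over $K$.

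Next we identify the descended object. The quadratic form $q$ (with values in $L$) descends to a quadratic form $q_K$ over $K$, the twist of $(V,q)$ by the torsor. The twist of $L$ is a line, so $q_K$ is a genuine form after choosing a generator. The descended variety is $\OGr(\ell',q_K)$. The tautological subspace over $\OGr(\ell,q)$ is $G$-equivariant, so it descends to a $K$-rational point of $\OGr(\ell,q_K)$, namely an $\ell$-dimensional isotropic subspace defined over $K$.

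Corollary~\ref{coro:ratsmalldim} and Proposition~\ref{prop:pointtorat} then apply over $K$.
\begin{itemize}
\item If $2\ell<\dim V$, then $\OGr(\ell',q_K)$ is rational over $K$.
\item If $2\ell=\dim V$, then $\ell'<\ell$ still gives $2\ell'<\dim V$. The $K$-point of $\OGr(\ell,q_K)$ yields a $K$-point of $\OGr(\ell',q_K)$ by taking any $\ell'$-dimensional $K$-subspace, and Proposition~\ref{prop:pointtorat} applies directly.
\end{itemize}
This is the reason for requiring $\ell'<\ell$: it avoids the disconnected maximal case. Rationality over $K$ gives a $K$-birational map from $\OGr(\ell',q_K)$ to $\bA^N_K$. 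Pulling back along the torsor, as in \cite[Lem.~5.1]{DR} and the proof of $(3)\Rightarrow(1)$ in Proposition~\ref{prop:dr}, gives a $G$-equivariant birational map over $U$ from $\OGr(\ell',q)\times U$ to $\bA^N\times U$. Here $\bA^N$ becomes a twisted affine bundle, which is a vector bundle by Hilbert 90 or the No-Name Lemma. This gives the linearizability over $\OGr(\ell,q)$.

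The main obstacle is the descent and identification step. We must check that the generic twist of $\OGr(\ell',q)$ really is the isotropic Grassmannian of the descended form $q_K$, including the twisting of $L$ when $\lambda$ is nontrivial. We must also check that the $K$-rationality produced by Proposition~\ref{prop:pointtorat} (an explicit construction using projection from the $K$-rational subspace and graphs of antisymmetric maps) is compatible with returning to an equivariant statement over the base rather than merely a stable birational one. I would first verify the descent formally via Galois descent for the finite étale cover $U\to U/G$, and then invoke the fibrewise construction of Proposition~\ref{prop:pointtorat} relative to the tautological subspace directly, which makes the equivariance manifest.
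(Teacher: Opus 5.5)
Your argument is correct. It is essentially the alternative proof the paper sketches in the remark right after this proposition: pass to the generic point of the free quotient, note that the twisted form $q_K$ acquires a $K$-rational $\ell$-dimensional isotropic subspace and hence an $\ell'$-dimensional one, apply Proposition~\ref{prop:pointtorat} over $K$, and return to the equivariant setting by descent as in Proposition~\ref{prop:dr}.

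The paper's main proof takes a different route. It works relatively over $\OGr(\ell,q)$, globalizing the proof of Proposition~\ref{prop:pointtorat}:
\begin{itemize}
\item from the universal subbundle $\cS$ it forms $\gamma:\Gr(\ell',V\otimes\cO/\cS)\to\OGr(\ell,q)$ and the rank-$(\ell+\ell')$ extension $\cE$;
\item it takes the radical $\cK=\gamma^*\cS\cap(\cL')^{\perp}$, of rank $\ell-\ell'$;
\item it uses the nondegenerate rank-$2\ell'$ form on $\cE/\cK$, whose tautological maximal isotropic subbundle is $\gamma^*\cS/\cK$.
\end{itemize}
This exhibits $\OGr(\ell',q)\times\OGr(\ell,q)$, birationally over the base, as a $\Gr(\ell',\ell)$-bundle over a split $\OGr(\ell',2\ell')$-bundle over a $\Gr(\ell',\dim V-\ell)$-bundle.

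The relative construction gives an explicit equivariant birational model with no torsors or function fields. Each layer must still be linearized, though, which again uses generic freeness. Your route is shorter: it reuses the field-theoretic statement verbatim, and generic freeness enters exactly once, to produce the torsor.

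Some small points:
\begin{itemize}
\item Your case split is unnecessary. Since $\ell'<\ell\le\dim V/2$, you always have $2\ell'<\dim V$, and any $\ell'$-plane inside the $K$-rational $\ell$-plane is the required $K$-point.
\item The step you flag as the main obstacle is routine Galois descent along $U\to U/G$.
\item The pullback of $\bA^N_K$ is simply $\bA^N\times U$ with trivial action on $\bA^N$, so Hilbert 90 is not needed there.
\item When $2\ell=\dim V$ and $G$ preserves both components of $\OGr(\ell,q)$, $K$ is a product of two fields, and the argument should be run componentwise.
\end{itemize}
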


Our reasoning is similar to \cite[Sec.~2.2]{lena}. 

\begin{proof}
Let $\cS \ra \OGr(\ell,q)$ denote the
universal isotropic sub-bundle and consider $\ell'$-dimensional
subspaces of the quotient
$$\gamma:\Gr(\ell',V\otimes \cO_{\OGr(\ell,q)}/\cS) \ra \OGr(\ell,q),$$
with universal bundle $\cL'$. We obtain an associated rank-$(\ell'+\ell)$ extension
$$0 \ra \gamma^*\cS \ra \cE \ra \cL' \ra 0$$
contained in the trivial bundle with fiber $V$. Let $Q=q|E$
a quadratic form degenerate along $\cK:=\gamma^*\cS \cap (\cL')^{\perp}$, a 
rank-$(\ell-\ell')$ subbundle. Thus we have a non-degenerate quadratic form $Q'$
on $\cE/\cK$ -- of rank $2\ell'$ -- with a tautological maximal isotropic
subspace $\gamma^*\cS/\cK$. Thus the space of all such maximal isotropic
subspaces is stably linearizable over $\Gr(\ell',V\otimes \cO/\cS)$ by Proposition~\ref{prop:linalg} (see also the
proof of Proposition~\ref{prop:pointtorat}).
Fixing an $\ell'$-dimensional isotropic subspace over $Q'$, the $\ell'$-dimensional isotropic subspaces of $Q$ mapping onto that subspace are parametrized by $\Gr(\ell',\ell)$.

Now $\Gr(\ell',q)\times \Gr(\ell,q)$ can be described -- equivariant birationally over $\Gr(\ell,q)$ -- as a tower of bundles
$$
\xymatrix{
\OGr(\ell',q) \times \OGr(\ell,q)
\ar@{-->}[rr]^{\sim} \ar[dr]& &
\cA \ar[dl] \\
    & \OGr(\ell,q) & 
}
$$
where $\cA \ra \OGr(\ell,q)$ factors as follows:
a $\Gr(\ell',\ell)$ bundle, over a split $\OGr(\ell',2\ell')$ bundle, over a
$\Gr(\ell',\dim(V)-\ell)$ bundle. It follows that $\OGr(\ell',q) \times \OGr(\ell,q)$
is stably linearizable over $\OGr(\ell,q)$.
\end{proof}

\begin{rema}
Here is an alternative proof: After stabilization, an $\ell$-dimensional subspace
guarantees an $\ell'$-dimensional subspace for $\ell'<\ell$. Over fields, 
rationality of $\OGr(\ell',q)$ follows from Proposition~\ref{prop:pointtorat}. 
The equivariant statement follows then from Proposition~\ref{prop:dr}. \end{rema}

The same reasoning yields the following equivariant version
of Proposition~\ref{prop:pointtorat}:
\begin{coro} \label{coro:fixedtorat}
Suppose that $X=\{q=0\} \subset \bP(V)$ is a $G$-quadric
and $\OGr(\ell,q)$ admits a $G$-fixed point or even an
equivariant morphism from a stably linearizable $G$-variety. Then
$\OGr(\ell',q)$ is stably linearizable for each $\ell' \le \ell$.
\end{coro}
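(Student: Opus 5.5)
The plan is to reduce to the relative statement of Proposition~\ref{prop:Wittdimred} and then descend along the given map. Let $Y$ be a stably linearizable $G$-variety with an equivariant morphism $f:Y \ra \OGr(\ell,q)$; a $G$-fixed point is the special case where $Y$ is a point, possibly replaced by a representation $V'$ with $G$ acting generically freely, via the No-Name Lemma. After replacing $Y$ by $Y\times V'$ for a faithful representation $V'$, we may assume $G$ acts generically freely on $Y$ and that $Y$ is still stably linearizable. Pull back the universal isotropic subbundle $\cS$ along $f$ to obtain an equivariant rank-$\ell$ isotropic subbundle $f^*\cS \subset V\otimes \cO_Y$.

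Repeat the tower construction from the proof of Proposition~\ref{prop:Wittdimred} over $Y$ instead of $\OGr(\ell,q)$. Concretely, build a $\Gr(\ell',\dim(V)-\ell)$-bundle of subspaces of $V\otimes\cO_Y/f^*\cS$. Over it, take the split $\OGr(\ell',2\ell')$-bundle coming from the non-degenerate rank-$2\ell'$ form $Q'$ with its tautological maximal isotropic subbundle, using the component described in Proposition~\ref{prop:pointtorat}. Finally add a $\Gr(\ell',\ell)$-bundle. This gives a $G$-variety $\cA_Y \ra Y$ that is equivariantly birational to $\OGr(\ell',q)\times Y$; the map sends an isotropic $\ell'$-space together with a point of $Y$ to the corresponding data, and a generic $\ell'$-space is recovered. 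Each stage is a Grassmannian bundle of an equivariant vector bundle, or a split orthogonal Grassmannian bundle. Since $G$ acts generically freely on the base, each stage is equivariantly stably birational to the base times affine space, by the No-Name Lemma and Proposition~\ref{prop:linalg}, where the tautological isotropic subspace supplies the graph coordinates. Hence $\OGr(\ell',q)\times Y$ is stably birational to $Y$, which is stably linearizable.

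It remains to remove the factor $Y$. Since $Y$ is stably linearizable, $Y\times W_1 \sim W_2$ for representations $W_i$. Then $\OGr(\ell',q)\times W_2$ is stably birational to $\OGr(\ell',q)\times Y\times W_1$, which is stably birational to a representation. So $\OGr(\ell',q)$ is stably linearizable, and when $G$ acts generically freely on it, the No-Name Lemma lets us pass between these formulations.

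Alternatively, use Proposition~\ref{prop:dr}. For every torsor $T/K$, the twist ${}^TY$ is stably rational, so it has a dense set of $K$-points when $K$ is infinite. Composing with $f$ gives a $K$-point of ${}^T\OGr(\ell,q)=\OGr(\ell,{}^Tq)$, and then Proposition~\ref{prop:pointtorat} and Corollary~\ref{coro:ratsmalldim} give rationality of $\OGr(\ell',{}^Tq)$. This argument is cleaner, provided one checks that $\OGr(\ell',q)$ is geometrically irreducible; for $\ell'=\dim(V)/2$ one must pick a component.

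The main obstacle is the edge case $2\ell' = \dim(V)$, where $\OGr(\ell',q)$ is disconnected. There one must either restrict to a component or use Proposition~\ref{prop:maxstabrat} to handle both components. The other delicate point is generic freeness, which is needed to apply the No-Name Lemma at each stage of the tower.
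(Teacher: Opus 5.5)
Your proposal is correct and follows the paper's route. The paper proves this corollary by ``the same reasoning'' as Proposition~\ref{prop:Wittdimred}: run the tower over the stably linearizable source, or alternatively twist and combine Proposition~\ref{prop:dr} with Proposition~\ref{prop:pointtorat}, as in the remark preceding the corollary. Your explicit removal of the auxiliary factor $Y$, and your treatment of the case $2\ell'=\dim(V)$ componentwise via Proposition~\ref{prop:maxstabrat}, fill in points the paper leaves implicit.
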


\subsection*{Relations between quadrics}
We note one last classical fact:
\begin{prop} \label{prop:getstabrat}
Let $X$ and $Y$ be smooth $G$-quadrics of positive
dimensions $d$ and $e$. Assume there
exists a $G$-equivariant rational map
$ X \dashrightarrow Y$
and the action on $X$ is generically free.
Then there exists a $G$-equivariant birational
map
$$Y \times X \stackrel{\sim}{\dashrightarrow} \bP^e \times X.$$

Suppose the actions on $X$ and $Y$ are both generically
free and we have equivariant rational maps in both directions
\begin{equation} X \dashrightarrow Y, \quad 
Y \dashrightarrow X. \label{eqn:tworationalmaps}
\end{equation}
Then we obtain a $G$-equivariant stable birational equivalence
$$
Y \times \bP^d \stackrel{\sim}{\dashrightarrow} \bP^e \times X.$$
This conclusion holds also under a weakening of (\ref{eqn:tworationalmaps}),
i.e., that there are rational maps
$$X \times \bP^n \dashrightarrow Y, 
\quad Y \times \bP^m \dashrightarrow X.$$
\end{prop}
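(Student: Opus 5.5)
The plan is to reduce to the classical fact over fields via Proposition~\ref{prop:dr}, or argue directly with equivariant geometry. For the first assertion I work directly. The rational map $\phi: X \dashrightarrow Y$ defines a $G$-equivariant rational section of the trivial family
$$\mathrm{pr}_2: Y \times X \to X,$$
namely $x \mapsto (\phi(x),x)$. Over the function field $K=k(X)$ the generic fiber is the quadric $Y_K$, and it acquires a $K$-rational point. Projection from that point shows $Y_K$ is $K$-rational. To keep this equivariant, I do the projection in families. At a generic point $x$, lines through $\phi(x)$ are parametrized by $\bP(V_Y/\ell_{\phi(x)})$. This gives a $G$-equivariant birational map of $Y\times X$ onto the projective bundle $\bP(\cE)\to X$, where $\cE = (V_Y\otimes\cO_X)/\phi^*\cO(-1)$ is a $G$-equivariant vector bundle of rank $e+1$ over a dense open of $X$. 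Here I use that the quadric is smooth of positive dimension, so projection from a smooth point is birational.

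Since $G$ acts generically freely on $X$, the No-Name Lemma trivializes the equivariant vector bundle $\cE$ birationally. Concretely, $\cE \simeq X\times \bA^{e+1}$ with trivial action on the second factor, equivariantly and birationally. Projectivizing then gives
$$\bP(\cE)\stackrel{\sim}{\dashrightarrow} \bP^e\times X,$$
with trivial action on $\bP^e$. The \emph{main obstacle} is making sure the projective bundle comes from an honest linear $G$-equivariant bundle and not just a projective one. This holds because $V_Y$ is a linear representation (zero Amitsur invariant, by the definition of a $G$-quadric), and $\phi^*\cO_Y(-1)$ is a $G$-linearized subbundle of $V_Y\otimes\cO_X$. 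So $\cE$ is $G$-linearized and No-Name applies.

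For the second assertion, I apply the first part twice. Using $X\dashrightarrow Y$, with the action on $X$ generically free, gives $Y\times X\sim \bP^e\times X$. Symmetrically, using $Y\dashrightarrow X$, with the action on $Y$ generically free, gives $X\times Y\sim \bP^d\times Y$. Composing yields
$$Y\times\bP^d \stackrel{\sim}{\dashrightarrow} X\times Y \stackrel{\sim}{\dashrightarrow} \bP^e\times X.$$

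For the weakened hypothesis, I replace $X$ by $X\times\bP^n$ with trivial action on $\bP^n$, which is still generically free. The first part then gives $Y\times X\times\bP^n\sim \bP^e\times X\times\bP^n$. Similarly, $X\times Y\times \bP^m\sim\bP^d\times Y\times\bP^m$. Multiplying each relation by a suitable trivial projective space and composing gives the stable birational equivalence, with extra trivial factors $\bP^n,\bP^m$ absorbed. Trivial projective factors are birational to trivial affine factors, so the result is consistent with the definition of equivariant stable birationality.
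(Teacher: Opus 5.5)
Your proposal is correct and follows essentially the same route as the paper. The graph of $X\dashrightarrow Y$ is a rational section of $Y\times X\to X$; projecting from it gives a $\bP^e$-bundle over $X$, and the No-Name Lemma trivializes that bundle. You then apply this twice, and pass to $X\times\bP^n$ and $Y\times\bP^m$ for the weakened hypothesis. Two points the paper leaves unstated are handled well in your version: you check that $\cE=(V_Y\otimes\cO_X)/\phi^*\cO(-1)$ is genuinely $G$-linearized because $V_Y$ is a linear representation, and you use (implicitly) that the first step needs only $Y$, not $X$, to be a quadric.
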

\begin{proof}
The rational map from $X$ to $Y$ means that projection
$$Y \times X \ra X$$
has a rational section. Now a quadric bundle with a section
is birational over $X$ to a projective bundle on $X$.
This bundle is birational to the trivial bundle 
$\bP^e \times X$ by the No-Name
Lemma, as the $G$-action on $X$ is generically free.
Applying this reasoning twice gives the stable birational
equivalence. Repeating this argument using the rational
maps from the products gives
$$Y\times \bP^d \times \bP^m \times \bP^n \stackrel{\sim}{\dashrightarrow}
X \times \bP^e \times \bP^m \times \bP^n$$
and our final assertion.
\end{proof}

\begin{prop} \label{prop:whyPfister}
Let $X\subset \bP(V)$ be a smooth $G$-quadric and 
$Y \subset X$ is a $G$-invariant smooth linear section
of codimension $c$. Assume that
\begin{itemize}
\item{the $G$-action on $Y$ is generically free;}
\item{for some $\ell>c$, there is an equivariant rational
map $X \dashrightarrow \OGr(\ell,q)$, or even an equivariant
$X \times \bP^n \dashrightarrow \OGr(\ell,q)$ for some $n$.}
\end{itemize}
Then $X$ and $Y$ are $G$-equivariantly stably birational.
\end{prop}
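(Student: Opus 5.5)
The plan is to use Proposition~\ref{prop:getstabrat}: produce equivariant rational maps in both directions, possibly after stabilization by projective spaces, and then conclude
$$Y\times \bP^{\dim X}\stackrel{\sim}{\dashrightarrow} \bP^{\dim Y}\times X.$$
The two directions are of very different difficulty.

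The map $Y\dashrightarrow X$ is free, since the inclusion $Y\subset X$ is $G$-equivariant. The action on $X$ is generically free because it already is on $Y\subset X$. The action on $Y$ is generically free by hypothesis. So the generic freeness assumptions of Proposition~\ref{prop:getstabrat} hold.

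The substance is the map $X\times\bP^n\dashrightarrow Y$. Write $Y=X\cap\bP(U)$, where $U\subset V$ is a $G$-subrepresentation of codimension $c$. This is a $G$-invariant linear section, so $U$ is stable, possibly after twisting $\rho$ by a character, which does not change the $G$-action on $\bP(V)$. The hypothesis gives an equivariant rational map
$$X\times \bP^n\dashrightarrow \OGr(\ell,q),\qquad x\mapsto W_x .$$
For generic $x$ (and generic point of $\bP^n$) the intersection $W_x\cap U$ has dimension at least $\ell-c\ge 1$, since $\ell>c$. It is isotropic for $q|U$. Generically the dimension is exactly $\ell-c$, and this is equivariant because $U$ is $G$-stable.

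The first attempt is the case $\ell=c+1$. Then $W_x\cap U$ is a line, so it gives a point of $Y$, and
$$x\mapsto [W_x\cap U]$$
is an equivariant rational map $X\times\bP^n\dashrightarrow Y$.

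For general $\ell>c$, we get instead an equivariant rational map to $\OGr(\ell-c,q|U)$. Here $q|U$ is non-degenerate because $Y$ is smooth. Then there are two options:
\begin{itemize}
\item Pass to the incidence variety of points in $\bP(W_x\cap U)$. This is a projective bundle over $X\times\bP^n$, equivariantly birational to $X\times\bP^{n+\ell-c-1}$ by the No-Name Lemma, and it maps equivariantly to $Y$.
\item Alternatively, apply Corollary~\ref{coro:fixedtorat} and Proposition~\ref{prop:Wittdimred} over $X\times\bP^n$ to drop to $\ell'=1$.
\end{itemize}
Either way we get $X\times\bP^{m}\dashrightarrow Y$, and the final clause of Proposition~\ref{prop:getstabrat} gives the stable birational equivalence.

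The main obstacle is genericity. We need the image of $X\times\bP^n$ in $\OGr(\ell,q)$ to meet the open set of subspaces $W$ with $\dim(W\cap U)=\ell-c$ exactly. If it lies in the locus where the intersection is larger, the rational map to $\OGr(\ell-c,q|U)$ is undefined. The projective-bundle version is robust to this, though: $W_x\cap U$ still has dimension $\ge\ell-c\ge1$ everywhere, so $x\mapsto W_x\cap U$ still defines a rational map to the Grassmannian of subspaces of the generic intersection dimension. The incidence construction then goes through unchanged. I would phrase the argument that way to avoid the issue.
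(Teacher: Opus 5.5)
Your proposal is correct and follows essentially the paper's argument. The inclusion $Y\subset X$ handles one direction, and the dimension count $\dim(W_x\cap U)\ge \ell-c\ge 1$ gives isotropic subspaces of $Y$ over (a stabilization of) $X$ for the other; the only difference is that the paper invokes Remark~\ref{rema:isostab} to conclude that $X\times Y$ is stably birational to $X$, whereas you take a point of $\bP(W_x\cap U)$ via the incidence projective bundle and feed the resulting map $X\times\bP^m\dashrightarrow Y$ into the last clause of Proposition~\ref{prop:getstabrat}, which amounts to the same thing.
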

\begin{proof}
As the action is generically free on $Y$, the same
is true for $X$. We follow the argument for Proposition~\ref{prop:getstabrat}; clearly, $X\times Y$
is stably birational to $Y$.
A dimension count
shows that projective subspaces parametrized by
$\OGr(\ell,q)$ intersect $Y$
non-trivially, in isotropic subspaces. 
Remark~\ref{rema:isostab} gives that $X\times Y$ 
is equivariantly stably birational to $X$ as well. 
Thus $X$ and $Y$ are stably birational to each other.
\end{proof}
\begin{rema}
Proposition~\ref{prop:whyPfister} only applies
when $2\dim(Y)\ge \dim(X)$. Otherwise, we cannot
find the required isotropic subspaces in $X$!
\end{rema}

\subsection*{Reductions to 2-Sylow subgroups}
Results of Duncan and Reichstein \cite[Th.~10.2]{DR}
(see also \cite[Sec.~5.3]{HTEGLDQ}) imply that a $G$-quadric $X$ is stably linearizable
if and only if it is stably linearizable for any
$2$-Sylow subgroup $G_2\subset G$.  
Similar results apply for stable birational equivalence:
\begin{prop}
Let $X$ and $Y$ be smooth quadrics with 
generically free
actions of a finite group $G$.  
These are stably birational over $G$ iff
they are stably birational over a $2$-Sylow subgroup.
\end{prop}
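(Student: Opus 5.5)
The forward direction is immediate: a $G$-equivariant stable birational map $X\times W_1 \dashrightarrow Y\times W_2$ is in particular $G_2$-equivariant, and restricting representations of $G$ to $G_2$ gives representations of $G_2$. For the converse, the plan is to pass through Proposition~\ref{prop:dr}. We fix a linear representation $V$ of $G$ with a dense open subset $V_0$ on which $G$ acts freely. We set $F=k(V)^G$ and $F_2=k(V)^{G_2}$, so that $F_2/F$ is a finite extension of degree $[G:G_2]$, which is odd. The torsor $V_0\to V_0/G$ restricts to the $G_2$-torsor $V_0\to V_0/G_2$, and the twist ${}^{V_0}X$ formed using $G_2$ is the base change $({}^{V_0}X)_{F_2}$ of the $G$-twist. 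By $(1)\Rightarrow(3)$ of Proposition~\ref{prop:dr} applied to $G_2$, the hypothesis gives that $({}^{V_0}X)_{F_2}$ and $({}^{V_0}Y)_{F_2}$ are stably birational over $F_2$. By $(3)\Rightarrow(1)$ for $G$, it suffices to show that ${}^{V_0}X$ and ${}^{V_0}Y$ are stably birational over $F$. The problem is thus reduced to an odd-degree descent statement for stable birationality of smooth quadrics over fields.

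To do this descent, we use the structure of the twists. The twist ${}^{V_0}X$ is a smooth quadric $Q_X$ over $F$. This holds because the Amitsur invariant vanishes, so $V$ twists to an $F$-vector space by Hilbert 90 and $q$ twists to an $L$-valued quadratic form; since $L$ is also trivialized by Hilbert 90, we get an honest form $q_X$. The same applies to $Y$, giving $q_Y$ and $Q_Y$. Next we invoke the classical criterion, in the spirit of Proposition~\ref{prop:getstabrat}: two smooth projective quadrics $Q_X,Q_Y$ of positive dimension over $F$ are stably birational if and only if each acquires a rational point over the function field of the other. Equivalently, $q_Y$ is isotropic over $F(Q_X)$ and $q_X$ is isotropic over $F(Q_Y)$. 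Indeed, if $Q_Y$ has an $F(Q_X)$-point, then $Q_Y\times Q_X$ is birational to $\bP^e\times Q_X$, and symmetrically, which gives the stable birationality. Conversely, stable birationality transports rational points over function fields: $Q_X$ has a point over $F(Q_X)$, and a rational point of a stably birational variety yields one on $Q_Y$ over the same field, since $Q_Y$ is smooth projective and rational points lift along stable birational equivalences of smooth projective varieties.

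Now we apply Springer's theorem. Over $F_2$, the quadrics $(Q_X)_{F_2}$ and $(Q_Y)_{F_2}$ are stably birational, so $q_Y$ is isotropic over $F_2(Q_X)=F(Q_X)\otimes_F F_2$. This is an odd-degree extension of $F(Q_X)$; it is a field because $Q_X$ is geometrically integral. Springer's theorem then says that $q_Y$ is already isotropic over $F(Q_X)$. Symmetrically, $q_X$ is isotropic over $F(Q_Y)$. The criterion then shows that $Q_X$ and $Q_Y$ are stably birational over $F$, and Proposition~\ref{prop:dr} concludes the proof.

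The main obstacle is the bookkeeping in the first step: identifying the $G_2$-twist with the base change of the $G$-twist, and making sure the twisted objects really are quadrics defined by quadratic forms over $F$. The latter requires the Amitsur vanishing and the use of Hilbert 90 to trivialize $L$. There is also the technical point of the converse direction of the stable birationality criterion, where we need rational points over function fields to transfer, using smoothness and projectivity. We will take that step as the standard field-theoretic fact.
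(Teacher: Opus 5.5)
Your proof is correct, but it takes a different route from the paper's.

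\textbf{The paper's route.} It works equivariantly before twisting. From the $G_2$-stable birational equivalence it extracts $G_2$-equivariant rational sections $\Sigma_X,\Sigma_Y$ of the two projections of $X\times Y$. It then takes the $G$-orbit closures $G\cdot\Sigma_X$ and $G\cdot\Sigma_Y$, which are $G$-invariant multisections of odd degree dividing $[G:G_2]$. Only then does it twist, by an \emph{arbitrary} $G$-torsor $T$ over $K$. The twisted multisections still have odd degree over ${}^TX$ and ${}^TY$, so Springer's theorem gives honest sections. The proof ends with $(2)\Rightarrow(1)$ of Proposition~\ref{prop:dr}.

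\textbf{Your route.} You twist first, using only the versal torsor $V_0$. The odd-degree input comes from Galois theory: the extension $F_2=k(V)^{G_2}$ of $F=k(V)^G$ has degree $[G:G_2]$, and the $G_2$-twist is the base change of the $G$-twist to $F_2$. The field $F_2(Q_X)=F(Q_X)\otimes_F F_2$ does the job of the twisted multisection $G\cdot\Sigma_X$. Indeed, over the versal torsor that twisted multisection is birationally $Q_X$ base-changed to $k(V)^H$, where $H\supseteq G_2$ is the stabilizer of $\Sigma_X$. You then close the argument with the function-field isotropy criterion for stable birationality of quadrics.

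\textbf{What each approach buys.} Yours has three advantages:
\begin{itemize}
\item It never needs to produce $G_2$-equivariant rational maps $X\dashrightarrow Y$ out of $X\times W_1\dashrightarrow Y\times W_2$; the paper asserts this step without further justification.
\item It avoids the orbit-closure and degree bookkeeping.
\item It never invokes the generic freeness hypothesis.
\end{itemize}
The paper's approach keeps the odd-degree multisections on $X\times Y$ itself and treats all torsors uniformly, so it stays closer to the equivariant geometry.

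\textbf{Citation fix.} For the step over $F_2$ you should invoke $(1)\Rightarrow(2)$ of Proposition~\ref{prop:dr} for $G_2$, applied to the specific torsor $V_0\to V_0/G_2$. Statement $(3)$ is only existential, so citing $(1)\Rightarrow(3)$ does not by itself give the conclusion for your chosen $V$.
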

In analyzing equivariant stable birational equivalence of quadrics, we need only consider
actions by $2$-groups.  
\begin{proof}
The forward implication is immediate; we focus on the reverse direction. 

Consider the stable birational equivalence over $G_2$
$$X \times V \stackrel{\sim}{\dashrightarrow} Y \times W.$$
This yields $G_2$-equivariant rational maps
$$X \dashrightarrow Y, \quad Y \dashrightarrow X.$$ 
The projections
$$X\times Y \rightarrow X, \quad X\times Y\rightarrow Y$$
therefore have $G_2$-equivariant rational sections
$\Sigma_X$ and $\Sigma_Y$.  
Let $G\cdot \Sigma_X$ and $G\cdot \Sigma_Y$ denote the closures of
the $G$-orbits of these rational sections. These have odd degree dividing
$[G:G_2]$ over $X$ and $Y$ respectively.

By Proposition~\ref{prop:dr}, we are reduced to proving the following statement: Given
an extension $K/k$ and a $G$-torsor $T$ over $K$, the twists ${ }^TX$ and ${ }^TY$
are stably birational over $K$. 

Consider ${ }^T(X\times Y)$, which is isomorphic to ${ }^TX \times { }^TY$, by \cite[Cor.~3.4(a)]{DR}.
We also have ${ }^T(G\cdot \Sigma_X)$ and ${}^T(G\cdot \Sigma_Y)$, which are still
odd-degree multisections over ${ }^TX$ and ${ }^TY$ respectively.  Springer's Theorem
implies that the projections of
${ }^T(X\times Y)$ to ${ }^TX$ and ${ }^TY$ have sections defined over $K$.
Thus ${ }^T(X\times Y)$ is stably birational to both ${ }^TX$ and ${ }^TY$ over 
$K$, and ${ }^TX$ and ${ }^TY$ are stably birational to each other
(see the proof of Prop.~\ref{prop:getstabrat}).
\end{proof}

\section{Equivariant Witt rings}
\label{sect:wring} 

Fix a character $\lambda:G \ra \bG_m$. Consider pairs $(V,q)$, where $V$ is a 
$G$-representation and $q\in \Sym^2(V^{\vee})\otimes L$ is a $G$-quadratic form 
with character $\lambda_q=\lambda$. A {\em homomorphism} of such pairs $(V_1,q_1)$
and $(V_2,q_2)$ is a 
homomorphism of representations $\phi:V_1\rightarrow V_2$ such that
$q_2\circ \phi = q_1$.  
The {\em sum} of $(V_1,q_1)$ and $(V_2,q_2)$ with
$\lambda_{q_1}=\lambda_{q_2}=\lambda$ is defined
$$(V_1,q_1) \oplus (V_2,q_2) = (V_1\oplus V_2, q_1 \oplus q_2);$$
we have $\lambda_{q_1 \oplus q_2}=\lambda$.  

\begin{defi}
Let $G$ be a finite group and $\lambda:G \ra \bG_m$ a character.
The {\em Witt group} $\Witt(G,\lambda)$ is the semigroup of non-degenerate $(V,q)$ with $\lambda_q=\lambda$ with sum operation
as defined above, modulo the hyperbolic forms. 
By convention, $\Witt(G,\lambda)=0$ if there are no such pairs.
\end{defi}
Corollaries~\ref{coro:anihyp} and \ref{coro:struct}
guarantee the group structure is well-defined. 
Concretely, it is the $\bF_2$ vector space generated by isomorphism classes
of irreducible representations $W$ with a symmetric 
$$W \stackrel{\sim}{\lra} W^{\vee} \otimes L$$
where $L$ is the one-dimensional representation associated with $\lambda$.

Suppose that $(V,q)$ and $(W,r)$ are quadratic forms with associated symmetric
homomorphisms
$$s:V \lra V^{\vee} \otimes L, \quad
t:W \lra W^{\vee} \otimes M.$$
Tensoring gives a symmetric
$$s\otimes t: V\otimes W \lra (V\otimes W)^{\vee} \otimes (L \otimes M),$$
which is an isomorphism if $s$ and $t$ are isomorphisms.  
We write 
$$(V,q) \otimes (W,r) = (V\otimes W, q\otimes r)$$
where $q\otimes r$ is the quadratic form associated with $s\otimes t$, with values in $L\otimes M$. 
Tensoring with a hyperbolic form yields a hyperbolic form, thus tensor product yields 
a well-defined
$$\Witt(G,\lambda) \times \Witt(G,\mu) \lra \Witt(G,\lambda \mu),$$
where $\lambda,\mu:G \rightarrow \bG_m$ are the characters associated with $L$ and $M$.
Let ${\bf 1} \in \Witt(G,1)$ denote the non-degenerate quadratic form on the trivial irreducible representation, 
which serves as the unity.

\begin{defi}
The {\em Witt ring} is the $\Hom(G,\bG_m)$-graded algebra
\begin{equation} \Witt(G)=\oplus_{\lambda} \Witt(G,\lambda) 
\label{eqn:grading}
\end{equation}
\end{defi}

We are primarily interested in the projective
geometry of quadrics. Consider a character
$$\nu:G \ra \bG_m$$
with associated one-dimensional representation $N$.
Given a $G$-equivariant $(V,q)$ with character 
$\lambda$, tensoring gives $(V\otimes N,q)$
with character $\lambda \nu^2$.
This is compatible with direct sums and takes
hyperbolic forms to hyperbolic forms. 
Let $\PWitt(G)$ denote the quotient of 
$\Witt(G)$ by this action of $\Hom(G,\bG_m)$. 
Note that 
\begin{align*}
& \PWitt(G) = \oplus_{[\lambda]}
\PWitt(G,[\lambda]) \\
&[\lambda] \in \cok\left(\Hom(G,\bG_m) \stackrel{\cdot 2}{\lra} \Hom(G,\bG_m)\right) 
\end{align*}
i.e., with terms indexed by similitude class.
Each summand is the quotient of $\Witt(G,\lambda)$
under the action of $\Hom(G,\mu_2)$, interpreted
one-dimensional representations with the canonical
invariant quadratic form.

\begin{rema}
The {\em character grading} (\ref{eqn:grading}) is
distinct from filtrations associated with the {\em augmentation ideal} of virtual quadratic forms of rank zero.
\end{rema}

\subsection*{Cyclic groups}
Let $G=\left<g\right>$ be a cyclic group of order $2^n$ with $n\ge 1$; 
fix a primitive
$2^n$-th root of unity $\zeta$.
For $i \in \bZ/2^n\bZ$, let $L_i$ denote the one-dimensional
representation with coordinate $x_i$, where $g$ acts via $\zeta^i$.

The quadric in the regular representation
$$X=\{x_0^2+x_1x_{2^n-1}+\cdots + x_{2^{n-1}}^2=0\} \subset \bP(\oplus_{i=0}^{2^n-1}L_i)$$ is 
linearizable due to the existence of fixed points, e.g.,
$[0,1,0,\ldots,0]$. The discriminant is non-trivial
so $X$ has no $G$-invariant maximal isotropic subspace.
It does have isotropic subspaces of dimension 
$2^{n-1}-1$, e.g.,
$$x_0=x_1=\cdots =x_{2^{n-1}}=0.$$
Thus all $G$-invariant linear sections $Y\subset X$
with $\dim(Y)\ge 2^{n-1}$ are equivariantly
stably birational to $X$, by Proposition~\ref{prop:whyPfister}, hence stably linearizable.

Invariant quadratic forms cover the degree-zero piece
of the grading (\ref{eqn:grading}).
The covariant quadratic forms with 
$$\lambda:G \ra \bG_m,\quad \lambda(g)=\zeta,$$
may be written
$$\sum_{i=0}^{2{n-1}-1} b_i x_i x_{1-i},$$
all of which are hyperbolic. 

The Witt ring $\Witt(G)$ 
only has terms of degree zero. Setting
$${\bf 1}=(L_0,x_0^2), \quad w=(L_{2^{n-1}},x_{2^{n-1}}^2)$$
$$\Witt(G)=\bF_2[w]/\left<w^2-{\bf 1}\right>.$$

Note that the Witt ring does not coincide with the $\bF_2$-cohomology of $G$.  Recall that \cite[Prop.~4.5.1]{CarlsonBook}:
$$\rH^*(G,\bF_2)=\begin{cases} \bF_2[z],  \deg(z)=1 & \text{ if } |G|=2 \\
                               \bF_2[\eta,z]/\left<\eta^2\right>, 
                                    \deg(\eta)=1,\deg(z)=2 
                                    & \text{ if } |G|=2^n>2.
                    \end{cases}
$$

\subsection*{2-elementary groups}
Let $G=\left<g_1,\ldots,g_n\right>$ be the group $C_2^n$.
Write $L_{i_1\ldots i_n}, i_j=0,1$, for the 
one-dimensional representation with coordinate $x_{i_1\ldots i_n}$,
such that $g_j$ acts via $(-1)^{i_j}$. Invariant quadratic
forms take the form
$$\sum_{I=(i_1\ldots i_n)} a_I x_I^2,$$
where the sum is over a subset of characters.
These are anisotropic provided the form is non-degenerate, i.e.,~the coefficients $a_I$ are all non-zero -- see Section~\ref{sect:ASPF}
for discussion.
Fixing a non-trivial 
$$\lambda \in \Hom(G,\mu_2)\simeq \Hom(G,\bG_m),$$
covariant forms of degree $\lambda$ may be written
$$\sum_I b_I x_{\lambda} x_{\lambda-I}.$$
These forms contain no squares, are 
hyperbolic, and admit fixed points, e.g.,~where all
but one of the variables vanish. Setting 
$$w_1=(L_{10\ldots 0},x_{10\ldots 0}^2),\, 
w_2=(L_{010\ldots 0},x_{010\ldots 0}^2),\, \ldots,\, 
w_n=(L_{0\ldots 01},x_{0\ldots 01}^2),
$$
we find
$$\Witt(G)=\bF_2[w_1,\ldots,w_n]/\left<w_1^2-{\bf 1},\ldots,
w_n^2 - {\bf 1}\right>.$$
The augmentation ideal is
$$
\left<w_1 - {\bf 1},\ldots, w_n-{\bf 1}\right>.
$$
Again, the Witt ring is not isomorphic to 
$$
\rH^*(G,\bF_2) \simeq \bF_2[x_1,\ldots,x_n].
$$

The ring $\Witt(G)$ is an Artinian local $\bF_2$-algebra;
it is a complete intersection and thus Gorenstein,
with socle generated by 
$$
\prod_{j=1}^n (w_j - {\bf 1}).
$$
This element arises from the quadratic form on the 
regular representation of $G$
$$q=\sum_{I=(i_1,\ldots,i_n)}(-1)^{|I|}
x_I^2, \quad |I|=i_1+\cdots+i_n.$$

\subsection*{Stable Witt rings}
From the stable birational perspective, demanding a $G$-invariant
isotropic subspace is overly restrictive. Suppose that
$X=\{q=0\}$ is a smooth irreducible quadric hypersurface
with a generically-free action of $G$. Recall
\cite[\S 10]{DR} that the following conditions are
equivalent (see also Proposition~\ref{prop:dr}):
\begin{itemize}
\item{$X$ is stably linearizable;}
\item{$\OGr(\ell,q)$ has a stably linearizable component for some $\ell> 0$;}
\item{for each $G$-torsor $T$ over a field $K$, the twist 
${ }^TX$ has $K$-rational points and is $K$-rational.}
\end{itemize}
To form the {\em stable} Witt ring we should
dispose of equivariant quadratic forms satisfying any of these
conditions. 
\begin{defi}
The {\em stable Witt ring} 
$$\SWitt(G) = \oplus_{\lambda} \SWitt(G,\lambda)$$
is the quotient of $\Witt(G)$ obtained by 
setting the following quadratic forms equal to zero: Non-degenerate $(V,q)$, with $\dim(V)=2m$ and trivial
discriminant, such that $\OGr(m,q)_{\pm}$ is
stably linearizable.  
\end{defi}
\begin{prop}
The stable Witt ring is well-defined. Each element
of $\SWitt_{\lambda}(G)$ may be represented by a 
covariant anisotropic quadratic form over the function 
field of a representation of $G$.
\end{prop}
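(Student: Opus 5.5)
We have to show two things: that the relation defining $\SWitt(G)$ comes from an ideal of $\Witt(G)$, so the quotient is a graded ring, and that each class has an anisotropic representative over a function field.

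\textbf{Well-definedness.} Let $\cS_\lambda\subset\Witt(G,\lambda)$ be the classes of non-degenerate $(V,q)$ with $\dim V=2m$, trivial discriminant, and $\OGr(m,q)_{\pm}$ stably linearizable. By Proposition~\ref{prop:maxstabrat} the two components are equivariantly stably birational, so the condition does not depend on the choice of component. We must check three things.

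\emph{The condition depends only on the Witt class.} Adding a hyperbolic lattice $H_W(\lambda)$ gives a form $q\oplus H$. The map $\Lambda\mapsto \Lambda\oplus W$ is an equivariant morphism $\OGr(m,q)_+\to\OGr(m+\dim W,q\oplus H)_{\circ}$. Corollary~\ref{coro:fixedtorat}, applied with an equivariant morphism from a stably linearizable variety, then gives stable linearizability of the larger Grassmannian. In the other direction, suppose $\OGr(m',q\oplus H)$ is stably linearizable. Intersecting a maximal isotropic subspace with $H^\perp=V$ and projecting produces isotropic subspaces of $q$. By a dimension count, the generic one meets $V$ in dimension $\ge m'-\dim H=m-\dim W$. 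For a cleaner argument I would pass through Proposition~\ref{prop:dr} and work over fields. For each torsor $T$, the twist ${}^T(q\oplus H)\simeq {}^Tq\perp(\text{hyperbolic})$, because $H$ twists to a hyperbolic form: ${}^TW$ is a $K$-vector space by Hilbert~90. Witt cancellation over $K$ shows that ${}^Tq$ is hyperbolic if and only if ${}^T(q\oplus H)$ is. By the classical theory (Proposition~\ref{prop:linalg}), hyperbolicity over $K$ is equivalent to the existence of a $K$-point of $\OGr(m,{}^Tq)_{\pm}$, and hence to its rationality. So membership in $\cS$ is the torsor-wise condition ``${}^Tq$ is hyperbolic for all $T$''. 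This is invariant under adding hyperbolic forms, and it is manifestly closed under direct sums.

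\emph{Closure under products.} If ${}^Tq$ is hyperbolic, then ${}^T(q\otimes r)={}^Tq\otimes{}^Tr$ is hyperbolic. So $\cS$ is an ideal, and negation is trivial in characteristic-two Witt groups. The main obstacle is the equivalence between the equivariant condition and the torsor-wise hyperbolicity. Proposition~\ref{prop:dr} handles stable birationality with a point, but stable linearizability of $\OGr(m,q)_\pm$ over $G$ has to be matched with $K$-rationality of its twists. I would use the equivalence list from \cite[\S 10]{DR} quoted above, applied to $\OGr$ in place of $X$: stable linearizability holds if and only if every twist has a $K$-point, which by Proposition~\ref{prop:linalg} is equivalent to every twist being $K$-rational. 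If generic freeness fails, first replace $q$ by $q\otimes$ (or $\oplus$) a faithful hyperbolic piece, which does not change the class.

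\textbf{Anisotropic representative.} Take a free representation $V_0\subset V$ with $F=k(V)^G$ and the versal torsor $S$. Send $(U,q)$ to the $F$-form ${}^Sq$, which is covariant for the twisted character $\lambda$. Let $({}^Sq)_{an}$ be its Witt-anisotropic part over $F$. By versality (Proposition~\ref{prop:dr}(3)), the class of $q$ is zero in $\SWitt$ exactly when ${}^Sq$ is hyperbolic, that is, when $({}^Sq)_{an}=0$. Similarly, two forms define the same class exactly when their twisted anisotropic parts agree. The anisotropic part of the twisted form therefore represents the element, which proves the second claim.
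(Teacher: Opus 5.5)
Your argument is correct, but it takes a different route from the paper. The paper stays equivariant throughout. For closure it writes down the equivariant morphisms $\OGr(m,q)\times\OGr(n,r)\to\OGr(m+n,q\oplus r)$ and $\OGr(m,q)\to\OGr(mn',q\otimes q')$, $\Lambda\mapsto\Lambda\otimes V'$, and invokes Corollary~\ref{coro:fixedtorat}. For the representative it takes $\ell$ maximal such that $\OGr(\ell,q)$ receives an equivariant rational map from a linear representation. It then applies the equivariant Witt lemma (Proposition~\ref{prop:getperp}) over that base and passes to the orthogonal complement of the isotropic subspace.

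You instead use Proposition~\ref{prop:dr} with $X_2$ a point to turn membership in $\cS$ into the condition that ${}^Tq$ is hyperbolic for every torsor, equivalently for the versal one. Everything else then reduces to classical facts over $F$: closure under sums and products, invariance under adding hyperbolic summands (by Witt cancellation), and the anisotropic representative.

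\emph{What each route buys.} Yours proves more than the paper states. It shows the condition depends only on the Witt class, and that $\SWitt(G,\lambda)$ injects into the Witt group of $F$ via the versal twist, for a fixed trivialization of the twisted line. This makes precise in what sense the anisotropic form ``represents'' a class. The paper's route produces the representative directly as a $G$-covariant form over the function field of the representation, as the statement literally says. Your form lives over $F=k(V)^G$. To match the wording you need a Galois-descent remark: base change to $k(V)$, noting that $G$-stable isotropic subspaces correspond to $F$-rational ones.

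\emph{Small repairs.}
\begin{itemize}
\item You do not need the \S10 list from Duncan--Reichstein or any generic freeness, since Proposition~\ref{prop:dr} has no such hypothesis. Your parenthetical fix is wrong as written: tensoring with a hyperbolic form kills the class, and the $\oplus$ variant would be circular.
\item The torsor-wise condition characterizes $\cS$ only together with triviality of the $G$-discriminant. You get this either by direct computation for sums and products, or from injectivity of inflation $\Hom(G,\mu_2)\to\rH^1(F,\mu_2)$ for the versal torsor.
\item The dimension-count argument you abandon only yields dimension $\ge m-\dim W$, and can simply be deleted.
\end{itemize}
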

\begin{proof}
The two components of the maximal isotropic Grassmannian are equivariantly stably birational by Proposition~\ref{prop:maxstabrat}, so the choice of 
component does not affect the definition.

The existence of $G$-invariant maximal isotropic subspaces
implies stable linearizability of the components of
maximal isotropic subspaces by Corollary~\ref{coro:fixedtorat}.
Thus elements trivial in $\Witt(G)$ are evidently trivial
in $\SWitt(G)$. 

We check that the stipulated elements form an ideal.
Suppose that $(V,q)$ and $(W,r)$ represent elements of 
$\Witt(G,\lambda)$, of dimensions $2m$ and $2n$, such that
$\OGr(m,q)_+$ and $\OGr(n,r)_+$ are stably linearizable.
Taking the direct sum $(V\oplus W,q\oplus r)$,
we get a $G$-equivariant 
$$\OGr(m,q) \times \OGr(n,r) \ra \OGr(m+n,q\oplus r)$$
hence the components of the target are also stably 
linearizable by Corollary~\ref{coro:fixedtorat}.
Now assume that $(V',q')$ represents an arbitrary element
of $\Witt(G,\mu)$ of dimension $n'$. We have an
equivariant morphism
$$\OGr(m,q) \rightarrow \OGr(mn',q\otimes q')$$
taking the isotropic subspace $\Lambda$ to $\Lambda \otimes V'$,
which is isotropic by the dimension of $q\otimes q'$.
Again, Corollary~\ref{coro:fixedtorat} gives the desired
conclusion.

For the last statement, take $(V,q)$ and choose $\ell$ maximal such that $\OGr(\ell,q)$ admits an equivariant rational map from a linear representation of $G$. The equivariant version of Witt's Lemma
(Proposition~\ref{prop:getperp}), 
applied over that representation, gives the desired quadratic form: 
the orthogonal complement of the $\ell$-dimensional
isotropic subspace over a stably linearizable base.
Proposition~\ref{prop:Wittdimred} guarantees this is 
anisotropic.  
\end{proof}

\begin{exam}
The simplest situation where these notions differ is for groups $G$
with representations
$$\rho:G \ra \GL_2$$
injecting to $\PGL_2$ but lacking fixed points. 
The symmetric group $G=\fS_3$ is an example. 
Let ${\mathbf 1}$ denote the trivial representation, $L$ the sign representation, and $V_2$ the two-dimensional
irreducible representation; all have non-degenerate $\fS_3$-invariant quadratic forms. Fix corresponding
generators
$$\mathbf{1},w,v \in \Witt(\fS_3)$$
so that
$$\Witt(\fS_3)=\bF_2[w,v]/\left<w^2-\mathbf{1},
wv-v,v^2-v-w-\mathbf{1}\right>.$$
Consider the non-degenerate quadratic form 
$$(V,q), \quad V = V_2\otimes V_2 \simeq V_2 \oplus \mathbf{1} \oplus L,$$
where $q$ is the tensor square of the standard invariant form 
on $V_2$. This is non-trivial in $\Witt(\fS_3)$: The locus
$$X = \{q=0\} \simeq \bP(V_2) \times \bP(V_2)$$
which lacks invariant isotropic subspaces. On the other
hand, $\OGr(2,q)_{\pm} \simeq \bP(V_2)$ which is linearizable;
thus $(V,q)$ is trivial in $\SWitt(\fS_3)$.
\end{exam}
Another example in this vein may be found after
Remark~\ref{rema:anisoRMnotSP}.

\subsection*{Notes on the literature}
Since Milnor's work \cite{Milnor}, cohomological invariants have shaped our conceptual understanding of quadratic form over fields.  Invariants of $G$-equivariant quadratic forms on {\em real} 
representations, taking values in $\rH^*(G,\bF_2)$, have been studied 
as well. We refer the reader to
\cite{GKT} for axiomatic approaches to Stiefel-Whitney classes
and Kahn \cite{Kahnreg1,Kahnreg2,Kahnreg3}
for computations for regular 
representations of $2$-groups and discussion
of filtrations in terms of
these classes.  
Another approach to equivariant Stiefel-Whitney classes, via intersection theory
and the Grothendieck-Riemann-Roch,
may be found in \cite{KozlowskiPAMS,KozlowskiRIMS}.
Guillot \cite{Guillot} has computed invariants
in numerous examples, using the 
comprehensive tables of group cohomology
with $\bF_2$ coefficients in \cite{CarlsonBook}.
Totaro \cite{Totarobook} considers 
cycle-class maps from the perspective of
equivariant Chow groups.

Based on our computations above, there is not a direct dictionary between Witt
rings and group cohomology, as one might
expect from the theory over fields. 

\section{Equivariant stably Pfister forms}
\label{sect:equipf}

\subsection*{Twisting construction}
Let $(V,q)$ be a non-degenerate $G$-quadratic form with 
$\dim(V)=n$; write $X=\{q=0\} \subset \bP(V)$
for the associated projective quadric hypersurface
and $\Cone(X) \subset V$ for the cone over $X$.
Choose an irreducible $G$-variety $U$ such that
$G$ acts freely on an open dense $U_0\subset U$
with field of invariants $F=k(U)^G$.  We have a
$G$-invariant hypersurface
$$\Cone(X) \times U_0 \subset V\times U_0.$$
Taking quotients yields an inclusion of twists
$${ }^{U_0} (\Cone(X)\times U_0) := G \backslash (\Cone(X)\times U_0) \subset G \backslash (V\times U_0)=: { }^{U_0}V.$$
Restricting to the generic point, 
$$
V':=  ({ }^{U_0}V)|_F\simeq \bA^n_F,
$$
by Hilbert Theorem 90 (or the No-Name Lemma), and
$$Z:= { }^{U_0}(\Cone(X)\times U_0)|_F$$
is a non-degenerate affine quadric hypersurface over $F$.
Write $Z=\{q'=0\}$, where $q'$ is a non-degenerate
quadratic form over $F$, well-defined up to a scalar. 
When $q$ is $G$ invariant, we may take $q'=q$, 
re-interpreted as
a quadratic form on $\bA^n_F$.

In summary, twisting takes a $G$-quadratic form to 
a quadratic form over a $G$-invariant field $F$, determined up to scalar; there is a canonical choice provided the quadratic
form is invariant. 

\subsection*{Compatibility with purely transcendental extensions}
From now on, $U$ is a linear representation of $G$
acting freely on a nonempty $U_0 \subset U$.
Recall that {\em general Pfister forms} are non-zero
multiples of Pfister forms \cite[\S{6A}]{EKMbook}.
\begin{prop} \label{prop:puretran}
Let $(V,q)$ be a non-degenerate $G$-quadratic form.
Suppose $U_1$ and $U_2$ are linear representations of $G$
that are generically free, with function fields
of invariants $F_1$ and $F_2$. Write $(V'_1,q'_1)$ and
$(V'_2,q'_2)$ for the corresponding non-degenerate
quadratic forms over $F_1$ and $F_2$.
\begin{itemize}
\item{$(V'_1,q'_1)$ has an isotropic $r$-dimensional linear subspace over $F_1$ iff
$(V'_2,q'_2)$ has an isotropic $r$-dimensional linear subspace over $F_2$.}
\item{$(V'_1,q'_1)$ is general Pfister over $F_1$ iff
$(V'_2,q'_2)$ is general Pfister over $F_2$.}
\end{itemize}
\end{prop}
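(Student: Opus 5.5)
The plan is to compare both twists through the product representation $U=U_1\oplus U_2$, which is also generically free, with field of invariants $F=k(U)^G$. By the No-Name Lemma, $U_1\oplus U_2$ is $G$-equivariantly birational to $U_1\times\bA^{\dim U_2}$ with trivial action on the second factor. Passing to invariants, $F$ is purely transcendental over $F_1$; symmetrically, it is purely transcendental over $F_2$. The twist is compatible with this change of variety. Restricting the $G$-torsor $U_0\times U_2\to (U_0\times U_2)/G$ to the generic point and comparing with the pullback of the torsor $U_{1,0}\to U_{1,0}/G$ shows that the form $(V',q')$ obtained over $F$ is $(V'_1,q'_1)\otimes_{F_1}F$, up to a scalar in $F^\times$. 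The scalar ambiguity is harmless: both properties in question, isotropic subspaces of a given dimension and being general Pfister, are invariant under scaling.

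It therefore suffices to prove the following. For a field $K$, a non-degenerate form $\varphi$ over $K$, and $L=K(t_1,\ldots,t_d)$, the form $\varphi$ has an $r$-dimensional isotropic subspace over $K$ iff $\varphi_L$ does, and $\varphi$ is general Pfister iff $\varphi_L$ is. Applying this once with $K=F_1$ and once with $K=F_2$, both conditions over $F_1$ and over $F_2$ become equivalent to the same condition over $F$.

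For isotropy, I would use Witt indices and induct on $d$. Split off hyperbolic planes, write $\varphi=\varphi_{an}\perp rH$, and reduce to showing that an anisotropic form stays anisotropic over $K(t)$. Take an isotropic vector with polynomial entries, clear denominators and common factors of $t$, and look at the leading or constant coefficients. This yields a nonzero isotropic vector over $K$, which is the standard argument from \cite[\S 7]{EKMbook}. Alternatively, since $K$ is infinite, $\OGr(r,\varphi)$ having an $L$-point means it has a $K$-point by specialization.

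For the Pfister statement, the forward direction is trivial, since $a\langle\langle b_1,\ldots,b_n\rangle\rangle$ stays of that form after extension. The converse is the main obstacle, because the Pfister entries over $L$ need not lie in $K$. I would first try the characterization that $\varphi$ is general Pfister iff $\dim\varphi=2^n$ and, for every field extension $E/K$, $\varphi_E$ is either anisotropic or hyperbolic \cite[Cor.~23.4]{EKMbook}. Given $E/K$, form the compositum $E(t_1,\ldots,t_d)$, which contains $L$. Since $\varphi_L$ is general Pfister, $\varphi_{E(t)}$ is anisotropic or hyperbolic. By the isotropy step, applied over $E$, the Witt index of $\varphi_E$ equals that of $\varphi_{E(t)}$. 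Hence $\varphi_E$ is anisotropic or hyperbolic, and so $\varphi$ is general Pfister. The point to check is that the characterization is valid with $\varphi$ itself possibly isotropic; the isotropic-hence-hyperbolic case is covered by that same corollary.
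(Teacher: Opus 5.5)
Your proposal is correct and follows the same route as the paper: you pass through $U_1\oplus U_2$ and the No-Name Lemma to see that $F$ is purely transcendental over both $F_1$ and $F_2$, use invariance of the Witt index under purely transcendental extensions for the isotropy claim, and deduce the Pfister claim from the characterization in \cite[\S 23]{EKMbook}. The differences are cosmetic. The paper applies Witt-index invariance to the function field of the quadric (anisotropic and hyperbolic over $K(X)$, using $K(X)(t)=K(t)(X)$), whereas you test the ``anisotropic or hyperbolic over every extension'' criterion on $E(t_1,\ldots,t_d)$; you also make explicit that the twists are compatible with base change up to a scalar, which the paper leaves implicit.
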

Our strategy is similar to the approach in \cite[{\S}2]{KrTs26}.
\begin{proof}
The representation $U_1\oplus U_2$ is also generically free;
choose open $U_{12;0} \subset U_{12}$ mapping dominantly to
$U_{1;0} \subset U_1$ and $U_{2;0} \subset U_2$. The group
$G$ acts freely on all these open subsets. 
By the No-Name Lemma,
$$G \backslash U_{1;0} \longleftarrow 
G \backslash U_{12;0} \longrightarrow G\backslash U_{2;0}$$
are birational to affine-space bundles. Thus the invariant field $F_{12}$ is purely transcendental
over $F_1$ and $F_2$.

The first assertion follows because the existence of 
isotropic subspaces does not change under purely transcendental
field extension \cite[1.21,7.15]{EKMbook}. 

For the second assertion, we use the results of \cite[\S{23}]{EKMbook} especially Cor.~23.4: General Pfister forms fall into two disjoint classes:
\begin{itemize}
\item{hyperbolic forms in $2^s$ variables;}
\item{anisotropic forms in $2m>0$ variables admitting 
a rational map 
$$X=\{q=0\} \dashrightarrow \OGr(m,q)$$
from the projective quadric to the variety of maximal isotropic subspaces.}
\end{itemize}
In the former case, the first assertion gives our result.
We therefore focus on the anisotropic case. 
Proposition~\ref{prop:linalg} implies that the 
following conditions are equivalent, for $m>1$ over a field $k$:
\begin{itemize}
\item{
the existence of a rational map
$$X \dashrightarrow \OGr(m,q);$$}
\item{the existence of a rational point of $\OGr(m,q)$
over $k(X)$;}
\item{$X$ is stably birational to $\OGr(m,q)_{\pm}$.}
\end{itemize}
When $m=1$, $X=\OGr(1,q)$ and these conditions are automatic.
Note that the choice of $\OGr(m,q)_{\pm}$ is 
immaterial by Proposition~\ref{prop:vq}.  

Since passing to a purely transcendental extension does not
change the dimensions of isotropic subspaces, all three
conditions are stable under such extensions.
\end{proof}

\subsection*{Stably Pfister equivariant quadratic forms}

\begin{defi} \label{defi:stabPfister}
The $G$-quadratic form $(V,q)$ is {\em stably Pfister} if
$(V',q')$ is a general Pfister form over $F=k(U)^G$
for some generically-free linear representation $U$ of $G$.
\end{defi}
Abusing terminology, we say that $X=\{q=0 \}\subset \bP(V)$
is stably Pfister as well.
Proposition~\ref{prop:puretran} means this is 
independent of the choice of representation $U$.

Recall from Proposition~\ref{prop:dr} -- see \cite[\S 10]{DR} -- that $X$ is stably linearizable if and only if every twist admits a rational point. Thus the following conditions are equivalent
for $m>1$:
\begin{itemize} 
\item[(1)] $X$ and $\OGr(m,q)_{\pm}$ are $G$-equivariantly stably birational;
\item[(2)] for every $G$-torsor $T$
defined over any extension $K/k$, the twists
$${ }^T X,\quad  { }^T\OGr(m,q)_{\pm}$$
are stably birational over $K$;
\item[(3)] there exists a linear representation
$U$ of $G$ for which $G$ acts freely on a dense open subset
$U_0 \subset U$, with field of invariants $F=k(U)^G$,
such that the twists
$$
{ }^{U_0} X,\quad  { }^{U_0}\OGr(m,q)_{\pm}
$$
are stably birational over $F$.
\end{itemize}

One motivation is to emulate the theory of {\em Pfister neighbors}:
Given an anisotropic Pfister form $q$ in $2^n$ variables over $k$ 
and a subform $Q$ in at least $2^{n-1}+1$ variables, then the quadric hypersurfaces
$$\{Q=0\} =:Y \subset X:= \{q=0\}$$
are stably birational over $k$ \cite[23.10]{EKMbook}.  
We obtain many stable birational
equivalences of quadrics with non-trivial birational geometry.
Proposition~\ref{prop:whyPfister} is the corresponding
equivariant statement.
Our notion of ``stably Pfister'' gives stable birational
equivalence among neighbors, provided the
actions are generically free. 

We record some basic properties:
\begin{prop} \label{prop:stabpfisbasic}
Let $(V,q)$ be a non-degenerate $G$-quadratic form.
If $(V,q)$ is stably Pfister for $G$ then it is stably
Pfister for subgroups of $G$.  

Suppose that $(V,q)$ and $(V',q')$ are non-degenerate
quadratic forms for groups $G$ and $G'$.
Assume that 
$$
X=\{q=0\} \subset \bP(V), \quad
X'=\{q'=0\} \subset \bP(V')
$$ 
are stably Pfister for $G$ and $G'$, respectively. 
Then 
$$
X \otimes X' = \{q\otimes q'=0\} \subset \bP(V\otimes V')
$$
is stably Pfister for $G\times G'$.
\end{prop}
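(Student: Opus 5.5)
For the first assertion, let $H \subset G$ be a subgroup. I will take a generically free linear representation $U$ of $G$; its restriction to $H$ is still generically free on the same open $U_0$. Writing $F=k(U)^G \subset E=k(U)^H$, the extension $E/F$ is finite of degree $[G:H]$, since $U_0 \to G\backslash U_0$ factors through $H\backslash U_0$. The twist of $(V,q)$ by the $H$-torsor $U_0 \to H\backslash U_0$ is the pullback to $H\backslash U_0$ of the $G$-twist. At the generic point, this means the $H$-twisted form $(V'_E,q'_E)$ is the base change $(V'_F,q'_F)\otimes_F E$, up to a scalar in $E^\times$. General Pfister forms are stable under arbitrary field extensions and scalar multiplication: a multiple of $\langle\langle a_1,\dots,a_s\rangle\rangle$ stays such a multiple. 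Hence $(V'_E,q'_E)$ is general Pfister over $E$. Proposition~\ref{prop:puretran} then shows that the conclusion does not depend on using this particular $U$ for $H$.

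For the tensor product, take generically free representations $U$ of $G$ and $U'$ of $G'$. Then $U\oplus U'$ is a generically free representation of $G\times G'$, free on $U_0\times U'_0$, with invariant field
$$F'' = k(U\oplus U')^{G\times G'} = \mathrm{Frac}(F\otimes_k F'),$$
where $F=k(U)^G$ and $F'=k(U')^{G'}$. The $(G\times G')$-torsor $U_0\times U'_0$ is the external product of the two torsors. So the twist of $(V\otimes V', q\otimes q')$ is the tensor product over $F''$ of the base changes of the two twisted forms $(V'_F,q'_F)$ and $(V'_{F'},q'_{F'})$. Each factor is general Pfister over $F''$ by extension of scalars. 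Pfister forms are closed under tensor product, $\langle\langle a\rangle\rangle\otimes\langle\langle b\rangle\rangle=\langle\langle a,b\rangle\rangle$, and scalars multiply, so the product is general Pfister. Here $(q\otimes q')$ is covariant with character $\lambda\boxtimes\lambda'$, and the scalar ambiguity in twisting covariant forms is absorbed by the word ``general''.

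The main obstacle is making the compatibility ``twist of a tensor product equals the tensor product of twists'' precise when the forms are only covariant, with values in $L$ and $L'$. The twisted line ${}^{U_0}L$ over $F$ is trivialized by Hilbert 90, but the trivialization is only defined up to a scalar. I would handle this by twisting the symmetric isomorphisms $s: V\to V^\vee\otimes L$ and $t$ directly. Twisting commutes with $\otimes$ of $G$-equivariant bundles, as in \cite[Cor.~3.4]{DR}. Choosing trivializations of ${}^{U_0}L$ and ${}^{U'_0}L'$ then gives a trivialization of the twist of $L\otimes L'$, and the twisted form is identified with $q'_F\otimes q'_{F'}$ up to a scalar in $F''^\times$.
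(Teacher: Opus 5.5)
Your proof is correct and follows the paper's argument: a form is stably Pfister exactly when its twists are general Pfister, and general Pfister forms are preserved under extension of the base field and under tensor product. The only difference is that you use two explicit linear representations, $U$ restricted to $H$ and $U\oplus U'$ for $G\times G'$. The paper instead invokes, without proof, the characterization ``stably Pfister iff all twists are Pfister'', so your version is more self-contained.
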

Recall the Pfister condition is compatible with field
extensions and tensor products. 
Indeed, this follows from equivalences supporting
Definition~\ref{defi:stabPfister}: An equivariant form
is stably Pfister iff all its twists are Pfister. 
However, the definition of Pfister forms over fields
\cite[\S 4.B]{EKMbook} makes clear that tensor products of Pfister forms are also Pfister. 

\subsection*{Examples} 
\label{sect:ASPF}

\subsubsection*{2-elementary groups}
Let $G=C_2^n$ and consider the natural diagonal form
on the regular representation
$$q=\sum_{I=(i_1\ldots i_n)}(-1)^{|I|} x_I^2.$$
For example, when $n=3$ we have
$$q = x_{000}^2 - x_{100}^2 - x_{010}^2 - x_{001}^2
+ x_{110}^2 + x_{101}^2 + x_{011}^2 - x_{111}^2.$$
Let $G$ act on $\bA^n_{t_1,\ldots,t_n}$, where the $i$th factor
of $C_2^n$ acts via $\pm 1$ on $t_i$ and trivially on the
other coordinates; write 
$$T:=\{t_1\cdots t_n \neq 0\}\subset \bA^n$$
for the locus where $G$ acts freely. Set $a_i=t_i^2$ so we may write
$${ }^Tq = \sum_I a^I x_I^2,$$
e.g., for $n=3$
$$
x_{000}^2 - a_1 x_{100}^2 - a_2x_{010}^2 - a_3x_{001}^2
+ a_1a_2x_{110}^2 + a_1a_3x_{101}^2 + a_2a_3x_{011}^2 - 
a_1a_2a_3 x_{111}^2.
$$
This is a generic Pfister form over the field $F=k(a_1,\ldots,a_n)$.
Thus $X=\{q=0\}$ is stably Pfister as a $G$-equivariant
quadric hypersurface.  

These forms are not stably linearizable -- anisotropic over the function
field:  
Equivariantly, this follows because $X=\{q=0\}$ has no fixed points
under $G$, see, e.g., \cite[\S 5 and Appendix]{RY} relating fixed points and stable linearizability for abelian groups.
Over function fields this is classical; one reference
is the theory of normic forms \cite[p.~377]{LangQAC}. 

Proposition~\ref{prop:whyPfister} immediately gives
\begin{prop}
Consider a quadric
$$Y_S=\{\sum_{I \in S} (-1)^{|I|} x_I^2=0\} \subset \bP^{|S|-1}$$
where $S \subset \{0,1\}^n$ satisfies $|S|>2^{n-1}.$
Then $Y_S$ is $G$-stably birational to $X$.  
\end{prop}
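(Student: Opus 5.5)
The plan is to apply Proposition~\ref{prop:whyPfister} with $X=\{q=0\}\subset\bP(V)$, $V=k[G]$ of dimension $2^n$, and $Y=Y_S=X\cap\bP(V_S)$. Here $V_S=\oplus_{I\in S}L_I$ is a $G$-invariant coordinate subspace. Because $q$ is diagonal in the characters $x_I$, the section $Y_S$ is a smooth $G$-invariant linear section of codimension $c=2^n-|S|<2^{n-1}$. Two hypotheses of Proposition~\ref{prop:whyPfister} then need checking: generic freeness of the action on $Y_S$, and the existence of an equivariant rational map $X\times\bP^N\dashrightarrow \OGr(\ell,q)$ for some $\ell>c$. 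We take $\ell=2^{n-1}$, which exceeds $c$ because $|S|>2^{n-1}$.

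For generic freeness, note that $G=C_2^n$ acts on $\bP(V_S)$ by diagonal signs. An element $g\neq 1$ acts trivially on $\bP(V_S)$ only if all characters in $S$ agree on $g$. Since $|S|>2^{n-1}$, the set $S$ is not contained in a coset of any index-two subgroup of characters, so no such $g$ exists. Then $Y_S$, a smooth quadric of positive dimension (when $|S|\ge 3$) that is not contained in any proper coordinate subspace, meets the free locus. The small cases $n\le 1$ would be handled directly or excluded.

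The main step is the rational map to $\OGr(2^{n-1},q)$, which comes from the stably Pfister property established just above: ${}^Tq$ is the generic $n$-fold Pfister form over $F=k(a_1,\dots,a_n)$. By the anisotropic case of \cite[Cor.~23.4]{EKMbook}, quoted in the proof of Proposition~\ref{prop:puretran}, there is a rational map ${}^TX\dashrightarrow\OGr(2^{n-1},{}^Tq)$ over $F$. Equivalently, ${}^Tq$ becomes hyperbolic over $F({}^TX)$. We then untwist this map. The statement ``${}^TX$ and ${}^T\OGr(m,q)_\pm$ are stably birational over $F$'' is condition (3) of the list following Definition~\ref{defi:stabPfister}. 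Via Proposition~\ref{prop:dr} (using the Duncan--Reichstein \cite[Lem.~5.1]{DR} step in its proof), it yields a $G$-equivariant birational map $X\times V'\dashrightarrow \OGr(m,q)_\pm\times V''$ for representations $V',V''$. Composing with the projection gives an equivariant rational map $X\times V'\dashrightarrow\OGr(m,q)$, and this can be recast as $X\times\bP^N\dashrightarrow\OGr(m,q)$ by the No-Name Lemma. I expect this untwisting to be the main obstacle, specifically making sure that the twist/untwist yields exactly the form of map required in Proposition~\ref{prop:whyPfister}. If it does not, I would instead invoke its proof directly, since that only needs the twisted statement over $F$ together with Proposition~\ref{prop:dr}.

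With both hypotheses verified, Proposition~\ref{prop:whyPfister} gives that $X$ and $Y_S$ are $G$-equivariantly stably birational.
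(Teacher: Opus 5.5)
Your proposal is correct and is essentially the paper's argument: the paper simply asserts that the statement follows immediately from Proposition~\ref{prop:whyPfister}, and you apply exactly that result with $\ell=2^{n-1}>c=2^n-|S|$. Your checks of its hypotheses fill in exactly the details the paper leaves implicit: generic freeness on $Y_S$ holds because $S$ meets both cosets of every index-two subgroup of characters, and the map $X\times\bP^N\dashrightarrow\OGr(2^{n-1},q)$ is obtained by untwisting the generic Pfister property through Proposition~\ref{prop:dr} and the No-Name Lemma.
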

Again the $G$-actions on $X$ and $Y_S$ 
lack fixed points so they are not stably linearizable.

\subsubsection*{Pfister surfaces}
Let $(V,q)$ be a $G$-invariant quadratic form of dimension $\dim(V)=4$;
assume that the discriminant vanishes and the resulting
action on $X=\{q=0\}$ is generically free.

Then $X\simeq R_+\times R_-$ 
where $R_+$ and $R_-$ are conics, associated with 
projective representations
$$\rho_{\pm}:G \rightarrow \PGL_2$$
whose Amitsur invariants $\alpha_+$ and $\alpha_-$ 
sum to zero. Since these are two-torsion, they are 
equal; the resulting class $\alpha \in \rH^2(G,\bG_m)[2]$
is the {\em Clifford invariant} of $X$ under $G$.
The projections 
$$\pi_{\pm}:X \ra R_{\pm}$$
guarantee that $X$ is stably Pfister --
provided that the surface is not stably linearizable.

\begin{prop} 
The $G$-surface
$X$ is stably linearizable iff $\alpha=0$.
\end{prop}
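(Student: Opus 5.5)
The plan is to prove both implications via the product decomposition $X\simeq R_+\times R_-$, with each conic $R_\pm$ embedded in $\bP(S_\pm)$, $S_\pm$ a two-dimensional projective representation of $G$. Since the discriminant is trivial, $G$ preserves each ruling, so $\OGr(2,q)_\pm\simeq R_\pm$ as $G$-varieties and the projections $\pi_\pm$ are equivariant.

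\emph{Sufficiency.} Suppose $\alpha=0$. Then both $\rho_\pm$ lift to linear representations $S_\pm$ of $G$, so $R_\pm\simeq\bP(S_\pm)$ equivariantly. Hence $X\simeq \bP(S_+)\times\bP(S_-)$. The $G$-action on $X$ is generically free, and by genericity I will choose the factor, say $R_-$, on which the action is generically free; if neither factor works, I pass to $\bP(S_+)\times \bP(S_-)\times W$ for a faithful representation $W$ of $G$. Then $X$ is a $\bP^1$-bundle $\bP(S_+)\times \bP(S_-)\to\bP(S_-)$ coming from the trivial equivariant vector bundle $S_+\otimes\cO$. The No-Name Lemma makes this bundle equivariantly birational to $\bP^1\times \bP(S_-)$ with trivial action on the fiber, so
$$X\times \bA^1\sim_G S_-\times\bA^1\times\bA^{?},$$
which is stably linearizable. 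Concretely, $X\times S_+\times S_-$ is birational both to $X\times \bA^4$ and to a representation.

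\emph{Necessity.} Suppose $X$ is stably linearizable. By Proposition~\ref{prop:dr}, or the equivalent conditions recalled in Section~\ref{sect:wring}, every twist ${}^TX$ over a field $K$ has a $K$-point. I apply this to the generic torsor $U_0\to U_0/G$ with $F=k(U)^G$. A point of ${}^{U_0}X$ projects via $\pi_\pm$ to points of the twisted conics ${}^{U_0}R_\pm$. Each twisted conic is a Brauer--Severi curve over $F$ whose class is the image of $\alpha$ under
$$\rH^2(G,\bG_m)\to \Br(F)$$
induced by the torsor. Here I use the standard compatibility that the twist of $\bP(S)$ by a torsor has Brauer class equal to the pullback of the Amitsur invariant. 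Having a rational point forces this image to vanish. It remains to show this map is injective on $\rH^2(G,\bG_m)$ for the generic torsor of a generically free linear representation. This is the known fact, used for example in \cite{BP} and \cite[\S 2]{HTPAMQ}, that a nonzero Amitsur invariant obstructs stable linearizability; equivalently, if the twisted $\bP(S_\pm)$ were split then $\bP(S_\pm)\times U$ would be linearizable, contradicting the statement in Section~\ref{sect:equiv-quad}. I can therefore shortcut the argument as follows. Stable linearizability of $X$ gives an equivariant rational map $X\times V\dashrightarrow$ (a representation) in reverse, hence a $G$-equivariant rational map from a representation to $X$, and composing with $\pi_\pm$ gives one to $R_\pm$. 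By Proposition~\ref{prop:getstabrat}-style reasoning, $R_\pm\times U$ is then birational to $\bP^1\times U$. The result cited after (\ref{eqn:Amitsur}) says that a nonzero Amitsur invariant prevents stable linearizability, so $\alpha=0$.

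The main obstacle is this injectivity step: a $G$-rational map from a linearizable variety to $R_\pm$ must force the Amitsur class to vanish. I plan to justify it by the cited Amitsur-invariant obstruction, arguing that $R_\pm\times U$ inherits the nonvanishing class. The remaining steps are bookkeeping: confirming that generic freeness on $X$ suffices for the No-Name step, and handling the stabilization by $W$.
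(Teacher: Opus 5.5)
Your proposal is correct and follows the same route as the paper. For $\alpha=0$ the surface is $\bP(S_+)\times\bP(S_-)$ and the No-Name Lemma gives stable linearizability; for $\alpha\neq 0$ the equivariant projection $\pi_\pm$ makes $X$ dominate a conic with nonzero Amitsur invariant, which is the obstruction the paper invokes via \cite[Prop.~A7]{BCDP}, and your detour through twists and injectivity of $\rH^2(G,\bG_m)\to\Br(F)$ is unnecessary since your shortcut already closes the argument.
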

\begin{proof}

If $\alpha=0$ then $X$ is a product of linearizable $\bP^1$'s,
hence is stably linearizable. 
When $\alpha \neq 0$ then $X$ cannot be 
stably linearizable;
it dominates a variety with non-trivial Amitsur invariant. (See \cite[Prop.~A7]{BCDP} and \cite[\S7]{HTNagoya} for context and detail.)
\end{proof}

This yields examples of $G$-surfaces that are stably Pfister
where $G$ is non-abelian; however, they are tightly
tied to $2$-elementary examples:

\begin{exam}
The representation of the dihedral group of order eight
$$(s,t) \mapsto (t,s), \quad (s,t) \mapsto (is,i^3 t)$$
gives a projective representation 
$$\rho:C_2\times C_2 \rightarrow \PGL_2.$$
Its tensor square $\rho^{\otimes 2}$:
\begin{align*}(s_+s_-,s_+t_-,t_+s_-,t_+t_-) &\mapsto 
(t_+t_-,t_+s_-,s_+t_-,s_+s_-) \\
(s_+s_-,s_+t_-,t_+s_-,t_+t_-) & \mapsto
(-s_+s_-,s_+t_-,t_+s_-,-t_+t_-)
\end{align*}
is a linear representation of $C_2\times C_2$. 
Setting 
\begin{align*}
x_{00}& =s_+t_- + t_+s_-, \quad 
x_{10} =s_+t_- - t_+s_-,\\
x_{01}& =s_+s_- + t_+t_-,\quad 
x_{11} =s_+s_- - t_+t_-,
\end{align*}
we see this is the standard regular representation of $C_2\times C_2$.
\end{exam}

\section{Analysis of unstable Pfister forms}
\label{sect:unstable}

Can we make sense of equivariant Pfister forms without
stabilizing? We propose a notion (Definition~\ref{defn:equmult}) with some reasonable
properties. However, formulating the notion of ``anisotropic''
is subtle; see Remark~\ref{rema:anisoRMnotSP}.

Throughout, we take $G$ be a finite group, $(V,q)$ a non-degenerate $G$-quadratic form, and
$
X=\{q=0\}\subset \bP(V)
$ 
the associated smooth quadric.

\subsection*{Multiplicative forms}
We propose a definition inspired by
Pfister's original formulation \cite[ch.~2]{Pfisterbook}:
Let $(V,q)$ be a non-degenerate quadratic form with similitude group
$\GO(V,q)$ and character 
$$
\lambda: \GO(V,q) \ra \bG_m.
$$
It is strictly multiplicative if there exists a rational map
$$\tau: V \dashrightarrow \GO(V,q)$$
such that 
$$q(\tau(x)\cdot y)=\lambda(\tau(x)) q(y) = q(x)q(y), \quad x,y \in V.$$
We refer the reader to \cite[\S{23}]{EKMbook} for how various definitions of Pfister forms are related.

For invariant forms, there is a natural equivariant analog:
\begin{defi}
\label{defn:equmult}
Let $G$ be a finite group and $(V,q)$ a non-degenerate $G$-invariant
quadratic form.
It is {\em equivariantly multiplicative}
if there exists a $G$-equivariant rational map
$$
\tau:V \dashrightarrow \GO(V,q)
$$
such that 
\begin{equation} \label{taufunc}
q(\tau(x)\cdot y) = q(x)q(y), \quad x,y \in V,
\end{equation}
i.e. $\lambda(\tau(x))=q(x)$.  
\end{defi}
The natural action on similitudes
$$(g,\tau) \mapsto g \tau g^{-1}$$
is compatible with this formula:
$$
q(\tau(g\cdot x)\cdot y)\!=\!q((g \tau(x) g^{-1})\cdot y)\!=\!q(\tau(x) \cdot g^{-1}y)\!=\!
q(x)q(g^{-1}y)=q(x)q(y).
$$

The ``Twisting Construction'' of Section~\ref{sect:equipf}
gives the following:
Suppose that $(V,q)$ is $G$-invariant and equivariantly
multiplicative. 
Let $U_0$ be an affine variety with free $G$-action 
and invariant field $F=k(U_0)^G$. The resulting quadratic form
over $(V',q'=q)$ over $F$ is strictly multiplicative because
$\tau$ descends to $F$.  Thus we obtain:

\begin{prop} \label{prop:EMtoSP}
Equivariantly multiplicative forms are stably Pfister.
Thus the underlying vector space $V$ satisfies 
$\dim(V)=2^e$ for some integer $e$ and the
discriminant is trivial when $e>1$.  
\end{prop}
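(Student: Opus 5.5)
The plan is to descend the equivariant data through the twisting construction, invoke the classical theory over fields, and then transport the two numerical conclusions back. Fix a generically free linear representation $U$ of $G$ with a dense open $U_0$ on which $G$ acts freely, and set $F=k(U)^G$. Since $q$ is $G$-invariant, the twisting construction of Section~\ref{sect:equipf} gives $(V',q')$ over $F$ with $q'=q$ viewed on $\bA^n_F$, $n=\dim V$.

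The first step is to descend $\tau$. Consider the $G$-equivariant rational map
$$\tau\times\mathrm{id}: V\times U_0 \dashrightarrow \GO(V,q)\times U_0,$$
where $G$ acts on $\GO(V,q)$ by conjugation. It is compatible with the quotient $G\backslash(V\times U_0)={ }^{U_0}V$ and the quotient $G\backslash(\GO(V,q)\times U_0)$. The latter is the twisted group ${ }^{U_0}\GO(V,q)$, which over $F$ is exactly $\GO(V',q')$, the conjugation twist matching the twist of $V$.

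The identity (\ref{taufunc}) is a polynomial identity in $x,y$, and it is invariant under the diagonal action (as checked after Definition~\ref{defn:equmult}). It therefore descends to the generic point and gives
$$q'(\tau'(x)y)=q'(x)q'(y)$$
with $\tau':V'\dashrightarrow\GO(V',q')$ defined over $F$. So $q'$ is strictly multiplicative over $F$. By Pfister's theorem, in the form recorded in \cite[\S 23]{EKMbook}, strictly multiplicative forms over a field are Pfister forms: either hyperbolic of dimension $2^e$, or anisotropic Pfister. Hence $q'$ is general Pfister, and $(V,q)$ is stably Pfister by Definition~\ref{defi:stabPfister}. Proposition~\ref{prop:puretran} makes this independent of $U$.

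The two numerical claims then follow from properties of Pfister forms over $F$. First, $\dim V=\dim_F V'=2^e$. Second, for $e\ge 2$ a Pfister form lies in $I^e\subset I^2$, so its discriminant in $I/I^2$ is trivial over $F$. To bring this back to $G$, I would compare the $G$-discriminant character $\det(V)\colon G\to\mu_2$ with the discriminant of the twist. The twisted discriminant is the image of $\disc(X)$ under
$$\Hom(G,\mu_2)=\rH^1(G,\mu_2)\to \rH^1(F,\mu_2)=F^\times/F^{\times2},$$
the map induced by the $G$-torsor $U_0\to U_0/G$. Equivalently, the discriminant algebra of $q'$ is the twist of the $G$-action on $\OGr(m,q)_\pm$ (Remark~\ref{rema:discmaxiso}).

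The main obstacle is this last comparison: showing that the map $\rH^1(G,\mu_2)\to\rH^1(F,\mu_2)$ is injective. I would argue directly. If $\chi$ is a nontrivial character, the corresponding quadratic extension is $k(U)^{\ker\chi}/F$, which is a nontrivial extension precisely because $G$ acts faithfully on $k(U)$. So a nontrivial $\chi$ cannot become a square class over $F$. A trivial twisted discriminant therefore forces $\disc(X)=0$. The only remaining care is that the twisted form is defined up to a scalar, and that is harmless here since $q$ is invariant and $q'=q$.
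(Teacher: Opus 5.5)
\textbf{Same route as the paper, with a gap at the Pfister step.} Your argument matches the paper's: twist by a generically free $U$, descend $\tau$ to get a strictly multiplicative form $q'$ over $F=k(U)^G$, then invoke Pfister. Your transfer of the discriminant through the map $\Hom(G,\mu_2)\to F^\times/F^{\times 2}$, which is injective because $G$ acts faithfully on $k(U)$, is correct. It fills in a step the paper leaves implicit.

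The gap is the sentence ``strictly multiplicative forms over a field are Pfister forms: either hyperbolic of dimension $2^e$, or anisotropic Pfister.'' This is false for isotropic forms. Take any hyperbolic space $W\oplus W^{\vee}$ with $q(w,f)=f(w)$. The map $\tau(w,f)=\operatorname{diag}\bigl(q(w,f)\,\mathrm{id}_W,\ \mathrm{id}_{W^{\vee}}\bigr)$ lands in $\GO$ with multiplier $q(w,f)$. So every hyperbolic form, of every even dimension, is strictly multiplicative. Pfister's characterization (multiplicative $\Rightarrow$ general Pfister) holds only for anisotropic forms. The dichotomy you quote from \S 23 of \cite{EKMbook} describes general Pfister forms; it is not a classification of strictly multiplicative forms.

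\textbf{The statement fails in the isotropic case.} Take $\dim W=3$, let $G$ act on $W$ (for instance $C_3$ with weights $1,\zeta,\zeta^2$, or trivially), and let $G$ act contragrediently on $W^{\vee}$. Then $q$ is invariant and the $\tau$ above commutes with $G$, so $\tau$ is equivariant. Hence $(V,q)$ is equivariantly multiplicative with $\dim V=6$. Its twist is hyperbolic of dimension $6$, so it is not general Pfister. The paper's one-line proof (``strictly multiplicative because $\tau$ descends'') has the same lacuna.

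\textbf{What your argument does prove.} Assume $q'$ is anisotropic over $F$, equivalently $X$ is not stably linearizable. Then Pfister's theorem applies, $q'$ is general Pfister, $\dim V=2^e$, and your injectivity argument gives $\disc(X)=0$ for $e>1$.

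In the isotropic case, a residue argument along $\{q'=0\}$ (or the Cassels--Pfister subform theorem) shows that a strictly multiplicative isotropic form is hyperbolic. This still yields trivial discriminant, but no constraint on $\dim V$ beyond evenness. You should add the anisotropy hypothesis explicitly, or split the conclusion into these two cases.
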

\begin{ques}
To what extent are stably Pfister forms equivariantly
multiplicative?
\end{ques}

\begin{prop} \label{prop:EMtoRatMap}
Suppose $(V,q)$ is equivariantly multiplicative of rank $2m$
and the $G$-action on $X$ is generically free.  
Then there is an equivariant rational map $X \dashrightarrow \OGr(m,q)$.
\end{prop}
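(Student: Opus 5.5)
Given the equivariant rational map $\tau: V \dashrightarrow \GO(V,q)$ with $\lambda(\tau(x)) = q(x)$, I will build the map $X \dashrightarrow \OGr(m,q)$ directly. For $x$ in the cone over $X$ we have $q(x)=0$, so $\tau(x)$, where it is defined, is a degenerate limit of similitudes with $\lambda(\tau(x))=0$. By (\ref{taufunc}), $q(\tau(x)y)=q(x)q(y)=0$ for all $y$, so the image of $\tau(x)$ is an isotropic subspace of $V$. The natural candidate for the map is therefore $x \mapsto \operatorname{im}\tau(x)$, or possibly $\ker\tau(x)$. This is $G$-equivariant because $\tau(gx)=g\tau(x)g^{-1}$ gives $\operatorname{im}\tau(gx)=g\cdot\operatorname{im}\tau(x)$. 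Note that $\tau(x)$ is homogeneous of degree at most one up to scaling, since $\lambda$ is quadratic in matrix entries. The map therefore descends from the cone to $X\subset \bP(V)$ once we projectivize, regarding $\tau(x)\in \oPSO(V,q)$, or rather the closure of $\PGO(V,q)$, as in Remark~\ref{rema:EOG}.

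The steps are as follows. First, I need to make sense of $\tau$ restricted to $\Cone(X)$. The map $\tau$ is only rational on $V$ and might be undefined along the hypersurface $\{q=0\}$. I will compose with $\bP(\End(V))$: the composite $V\dashrightarrow \bP(\End(V))$ is defined in codimension one, since $V$ is normal and $\bP(\End(V))$ is projective. Hence it is defined at the generic point of the irreducible divisor $\Cone(X)$ (irreducible since $\dim V=2m\ge 3$, or handle $m=1$ separately where $\OGr(1,q)=X$). Its value there is a point of $\oPSO(V,q)$, or its $\PGO$ analog, lying over $\{\lambda=0\}$, i.e. a nonzero matrix $A$ with $q(Ay)=0$ for all $y$.

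Second, I will show that for generic $x\in \Cone(X)$ the matrix $A=\tau(x)$ has rank exactly $m$, so $\operatorname{im}A\in\OGr(m,q)$. Rank at most $m$ is automatic since the image is isotropic. For rank at least $m$ I will use a multiplicativity trick. Polarizing (\ref{taufunc}) gives $b(\tau(x)y,\tau(x)y')=q(x)b(y,y')$. Applying the identity to a generic $x'$ near $x$ and taking limits, the transpose relation $\tau(x)^*\tau(x)=q(x)\cdot \mathrm{id}$ (adjoint with respect to $b$) holds identically on $V$. Then $\tau(x)\tau(x)^*=q(x)\mathrm{id}$ as well, since both factors are invertible off $\{q=0\}$. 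I also expect to get $\tau(x)x'$-type relations, for instance a formula like $\tau(x)+\tau(x)^{*}$ being linear in $x$ in classical composition formulas. If this is insufficient, I will instead use Proposition~\ref{prop:Dm-1Dm}: the locus $\{\lambda=0\}$ in $\oPSO$ has as its rank-$m$ part two divisorial components, and the lower-rank strata have smaller dimension. I will then argue that the image of $\Cone(X)$, which has dimension $2m-1$, must meet the rank-$m$ locus. This requires a dimension or transversality argument, and I expect this to be the main obstacle. My first attempt will be the following: if $\operatorname{rank}\tau(x)<m$ generically, then $\ker \tau(x)$ has dimension greater than $m$. Yet for $y\notin \Cone(X)$, the element $\tau(y)$ is invertible, and I would derive a contradiction with $q(\tau(x)y)=0$ combined with the relation $\tau(x)^*\tau(x)=0$ on the cone, which forces $\operatorname{im}\tau(x)\subseteq\ker\tau(x)^*=(\operatorname{im}\tau(x))^\perp$.

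Finally, the generic freeness hypothesis ensures that the construction is meaningful as a rational map of $G$-varieties and that one can pass between $\Cone(X)$ and $X$ equivariantly, using that scaling $x$ scales $\tau(x)$ by a character. The resulting map $x\mapsto[\operatorname{im}\tau(x)]$ lands in one component of $\OGr(m,q)$, consistent with Proposition~\ref{prop:EMtoSP} (trivial discriminant), and gives the desired equivariant rational map.
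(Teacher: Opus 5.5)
\textbf{The main gap.} Your strategy breaks at the step you flag yourself: for generic $x\in\Cone(X)$, the limit $A$ of $\tau(x)$ in $\bP(\End(V))$ has rank exactly $m$. This step is not just unproven; it is false in general.

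Your adjoint identities $\tau(x)^*\tau(x)=\tau(x)\tau(x)^*=q(x)\,\mathrm{id}$ only show, on the cone, that the image is isotropic and the kernel coisotropic, i.e.\ $\rank A\le m$. Every lower-rank boundary point of $\oPSO(V,q)$ satisfies the same relations. A dimension count cannot help either, since the image of $\Cone(X)$ may be small.

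The limit can genuinely have rank $<m$. If $\tau_0$ is equivariantly multiplicative, so is $\tau(x)=\tau_0(x)\circ r_x$, where $r_x(y)=y-2b(x,y)x/q(x)$ is the reflection in $x$. It is equivariant because $G$ preserves $q$, and $\lambda(r_x)=1$. Clearing the simple pole along $\{q=0\}$, the limit on $\Cone(X)$ is $y\mapsto -2b(x,y)\,\tau_0(x)x$. This has rank one whenever $\tau_0(x)x\not\equiv 0$ on the cone, e.g.\ for left multiplication in an octonion algebra, where $\tau_0(x)x=x^2$ is a generically nonzero multiple of $x$ there. So $x\mapsto\operatorname{im}\tau(x)$ can land in $\OGr(1,q)$ rather than $\OGr(m,q)$.

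What survives is parity. After clearing a pole of order $-a\ge 0$, $\lambda$ pulls back to $q^{1-2a}$, which vanishes to \emph{odd} order along $\Cone(X)$. The paper lifts to the wonderful compactification, where $\beta^*\{\lambda=0\}=2(D_1+\cdots+D_{m-2})+D_{m-1}+D_m$ (Proposition~\ref{prop:excmulttwo}). Odd order forces the generic point of $\Cone(X)$ into $D_{m-1}\cup D_m$. These divisors map to $\OGr(m,q)_{\pm}\times\OGr(m,q)_{\pm}$ everywhere, including along their intersections with the other $D_i$, which is exactly where the rank in $\bP(\End(V))$ drops.

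Within your framework, if $\tau$ has no pole along the cone you can get $\rank A\ge m$ from the first-order term. Along a transverse curve write $\tau=A+tB+\cdots$ and $q=ct+\cdots$ with $c\neq 0$. Then $A^*B+B^*A=c\,\mathrm{id}$, so $2\rank A\ge 2m$. This does not handle poles.

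\textbf{The descent to $X$.} Your passage from $\Cone(X)$ to $X$ rests on the claim that $\tau$ is homogeneous up to scaling, so that scaling $x$ scales $\tau(x)$ by a character. The relation $\lambda(\tau(x))=q(x)$ implies nothing of the sort; for instance $\tau=\mathrm{diag}(x_0x_1,1)$ in the paper's cyclic rank-two example is not homogeneous. Hence the subspace attached to $x$ need not be constant along lines through the origin. The correct use of generic freeness is the paper's: the No-Name Lemma gives a $G$-equivariant rational section $X\dashrightarrow\Cone(X)\setminus 0$, and you restrict the map defined on the cone along that section.
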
 
We refer the reader to Remark~\ref{rema:anisoRMnotSP}
for discussion of the converse.
\begin{proof} 
This is tautologically true for $m=1$.
When $m>1$, $X$ is irreducible and any map factors through $\OGr(m,q)_{\pm}$. We have
already discussed the $m=2$ case:
$X$ is a quadric surface and $\OGr(2,q)_{\pm}$
parametrize its rulings. Thus we focus on $m>2$.  

We use the notations of Section~\ref{sect:compactify}.
The similitude form $\lambda$ yields an effective divisor $$\Lambda=\{\lambda=0\} \subset \bP(\End(V))$$
with proper transform $\Lambda' \subset \oX(V,q)$.
Proposition~\ref{prop:Dm-1Dm}
implies that $\Lambda'$ has two irreducible components
$D_{m-1}$ and $D_m$, each of multiplicity one. 
Proposition~\ref{prop:excmulttwo} gives that the difference
$$
\Lambda - \Lambda' = 2E,
$$
where $E$ is the reduced exceptional divisor of
$$\oX(V,q) \stackrel{\beta}{\longrightarrow} \oPSO(V,q) \subset \bP(\End(V)).$$

The rational map $\tau$ induces 
$$\tau': V \dashrightarrow \oX(V,q),$$
which is defined at the generic point of the
hypersurface $\{q=0\}$ due to the properness of the target.
We are abusing notation, using $\oX(V,q)$ for the
component of the wonderful compactification of $\PGO(V,q)$
containing the image of $\tau$; see Remark~\ref{rema:EOG}
for further context.
The hypersurface $\Cone(X)=\{q=0\}$ is mapped into $\Lambda'$ by
the functional relation (\ref{taufunc}),
which guarantees that $\lambda$ has 
multiplicity one along our hypersurface.  
The No-Name Lemma 
gives a rational section to the cone 
$$\Cone(X) \setminus 0 \ra X$$
and an induced 
$$\tau'': X \dashrightarrow \oX(V,q).$$

We claim that $X$ is mapped to one of the two irreducible components of $\Lambda'$ dominating divisors in the closure 
of $\PSO(V,q) \subset \bP(\End(V))$. 
Proposition~\ref{prop:Dm-1Dm} says that these 
are birational to projective bundles over
$$\OGr(m,q)_+ \times \OGr(m,q)_- ,\text{ for $m$ odd},
$$
or 
$$\OGr(m,q)_{\pm} \times \OGr(m,q)_{\pm},  \text{ for $m$ even.}
$$
Composing $\tau''$
with the projection to one of the factors yields
$$X \dashrightarrow \OGr(m,q)_+ \text{ or } \OGr(m,q)_-, 
$$
as desired.
To see this, note that all the other irreducible components of $\Lambda'$ have {\em even} multiplicity. 
Thus if $\tau'$ took $X$ to these components -- missing the
two distinguished components -- then the
pull-back of $\lambda$ to $V$ would necessarily vanish
to order $\ge 2$ along $X$. This would contradict the
fact that $\lambda$ pulls back to $q$, the defining equation
of $X$.  
\end{proof}

\subsection*{Competing notions of isotropy}

Let $(V,q)$ be a non-degenerate quadratic form in $2m$-variables over a field $k$,
with associated hypersurface $X\subset \bP(V)$. 
Assuming it is anisotropic, it is proportional to a Pfister form if and only if there is a rational map
\begin{equation}
 \label{eqn:ogrsect}
X=\{q=0\} \dashrightarrow \OGr(m,q).
\end{equation} 
Does this make sense equivariantly? We interpret ``anisotropic'' as lacking a
$G$-invariant isotropic linear subspace.  We want that the resulting class has properties similar to Pfister quadrics over fields.

\begin{rema} \label{rema:anisoRMnotSP}
There exists a non-degenerate $G$-quadratic form $(V,q)$
of dimension $2m$,
anisotropic in the sense above, that admits an equivariant rational map (\ref{eqn:ogrsect})
but is not stably Pfister.
\end{rema}

Consider the Frobenius group
$$G = C_5 \rtimes C_4, \quad C_5 = \left< \sigma \right>, C_4 = \left< \tau \right>,$$
with the cyclic group $C_4$ acting via automorphisms on $C_5$
$$\sigma^5=\tau^4=1, \quad \tau \sigma \tau^{-1} = \sigma^2.$$
Let $W=\left<w_1,w_2,w_3,w_4\right>$ denote the irreducible four-dimensional representation
\begin{align*}
 \sigma \cdot w_i &= \zeta^{2^{i-1}} w_i, \quad \zeta=e^{2\pi i/5} \\
\tau \cdot w_1 &= w_4, \quad \tau \cdot w_i = w_{i-1}, \ i=2,3,4.
\end{align*}
The projective space $\bP(W)$ has no fixed points under $G$.

Consider the $G$-quadratic form $(V,q)$:
$$V=\bigwedge^2 W, \quad
q:\bigwedge^2 W \ra \bigwedge^4 W$$
i.e., squaring of elements. The form $q$ is $\sigma$-invariant and transforms by the
sign character on $\left<\tau\right>$.
Write
$$X := \Gr(2,W) = \{q=0\} \subset \bP(\bigwedge^2 W),$$
a quadric fourfold with generically-free $G$-action.
We have a decomposition of $G$-representations
$$\bigwedge^2 W \simeq W \oplus \left< w_1\wedge w_3+ i w_2\wedge w_4\right> \oplus
				\left<w_1\wedge w_3 - i w_2\wedge w_4\right>,$$
where the final two summands are non-isomorphic and not isotropic for $q$. 
Projection from this two-dimensional representation gives a rational map
$$\psi: X \dashrightarrow \bP(W),$$
a conic fibration over the generic point of $\bP(W)$. 
Note that $X$ is anisotropic in that it has no $G$-invariant isotropic subspaces.  

In this situation
$$\OGr(3,q) = \bP(W) \sqcup \bP(W^{\vee})$$
with isotropic spaces given by lines through $w\neq 0 \in W$ and
lines contained in $w^{\vee}=0$.  Thus $\psi$ induces
$$X \dashrightarrow \OGr(3,q)$$
i.e., $X$ is hyperbolic over its function field.  
However, since $q$ has six variables we cannot 
reasonably interpret it to be equivariantly Pfister.  

We close with two remarks:
\begin{itemize}
\item{$X=\{q=0\}=\Gr(2,W)$ is stably linearizable; its universal
rank-two bundle is a $\bP^2$-bundle over $W\setminus 0$.}
\item{The components of $\OGr(3,q)$ are linearizable.}
\end{itemize}
Thus $q$ is ``stably'' isotropic and hyperbolic.

\subsection*{Examples of equivariantly multiplicative actions}
\subsubsection*{2-elementary groups}
\begin{exam}
Let $G=C_2$ with regular representation $W$, where the coordinate $x_0$ is 
invariant and $x_1$ anti-invariant. Then  $q=x_0^2 - x_1^2$
is strictly multiplicative
$$(x_0^2-x_1^2)(y_0^2-y_1^2) = (x_0y_0+x_1y_1)^2 - (x_1y_0+x_0y_1)^2.$$
The mapping
$$\tau(x_0,x_1)\cdot (y_0,y_1) = \left( \begin{matrix} x_0 & x_1 \\ x_1 & x_0 \end{matrix} \right) 
\left( \begin{matrix} y_0 \\ y_1 \end{matrix}\right)$$
is $C_2$-equvariant.
\end{exam}

\begin{exam}
Consider the $G=C_2^2$-equivariant form
$$q=x_{00}^2 - x_{10}^2 - x_{01}^2 +x_{11}^2,
$$
where the subscripts are characters $G \ra \mu_2$.
Setting
\begin{align*}
z_{00} &= +x_{00}y_{00}-x_{10}y_{10}+x_{01}y_{01}+x_{11}y_{11}\\
z_{10} &= -x_{10}y_{00}+x_{00}y_{10} +x_{11}y_{01}+x_{01}y_{11}\\
z_{01} &= +x_{01}y_{00} -x_{11}y_{10} + x_{00}y_{01} + x_{10}y_{11} \\
z_{11} &= -x_{11}y_{00} + x_{01}y_{10} + x_{10}y_{01} + x_{00} y_{11}
\end{align*}
we get the desired multiplicative relation
$$z^2_{00}-z_{10}^2 - z_{01}^2 + z_{11}^2 \mapsto
(x^2_{00}-x_{10}^2 - x_{01}^2 + x_{11}^2)(y^2_{00}-y_{10}^2 - y_{01}^2 + y_{11}^2).
$$
The matrix expressing the $z$-variables in terms of the $y$-variables
has determinant $q^2$.
\end{exam}

\begin{rema}
The choice of signs for the Pfister form is not natural:
The action of $\GL_2(\bZ/2\bZ)$ on $C_2^2$ alters the form e.g. to
$$x_{00}^2 - x_{10}^2 + x_{01}^2 -x_{11}^2 \text{ or }
x_{00}^2 + x_{10}^2 - x_{01}^2 -x_{11}^2.$$
The form 
$$\hat{q}=x_{00}^2 + x_{10}^2 + x_{01}^2 + x_{11}^2$$
is invariant.  
Writing 
\begin{align*}
z_{00} &= +x_{00}y_{00}+x_{10}y_{10}-x_{01}y_{01}-x_{11}y_{11}\\
z_{10} &= -x_{10}y_{00}+x_{00}y_{10} -x_{11}y_{01}+x_{01}y_{11}\\
z_{01} &= +x_{01}y_{00} +x_{11}y_{10} + x_{00}y_{01} + x_{10}y_{11} \\
z_{11} &= +x_{11}y_{00} - x_{01}y_{10} - x_{10}y_{01} + x_{00} y_{11}
\end{align*}
induces
$$(z_{00}^2+z_{10}^2+z_{01}^2+z_{11}^2) \mapsto
(x_{00}^2+x_{10}^2+x_{01}^2+x_{11}^2) 
(y_{00}^2+y_{10}^2+y_{01}^2+y_{11}^2).$$ 
The transformation 
$$\left( \begin{matrix}
x_{00} &x_{10} &-x_{01}&-x_{11}\\
-x_{10}&x_{00}&-x_{11}&x_{01}\\
x_{01}&x_{11}&x_{00}&x_{10}\\
x_{11}&-x_{01}&-x_{10}&x_{00}
\end{matrix} \right)$$
has determinant $\hat{q}^2$.

However, the construction here is also non-canonical. For example,
reversing the signs of both $x_{01}$ and $y_{01}$ gives the
matrix:
\begin{equation} \label{eqn:mat1}
\left( \begin{matrix}
x_{00} &x_{10} &-x_{01}&-x_{11}\\
-x_{10}&x_{00}&x_{11}&-x_{01}\\
-x_{01}&x_{11}&-x_{00}&x_{10}\\
x_{11}&x_{01}&x_{10}&x_{00}
\end{matrix} \right)
\end{equation}
and then reversing the signs of $x_{10}$ and $y_{10}$ gives:
$$\left( \begin{matrix}
x_{00} &x_{10} &-x_{01}&-x_{11}\\
x_{10}&-x_{00}&x_{11}&-x_{01}\\
-x_{01}&-x_{11}&-x_{00}&-x_{10}\\
x_{11}&-x_{01}&-x_{10}&x_{00}
\end{matrix} \right).$$
We can also reverse the sign of any row of the matrix,
e.g., switching the third row of (\ref{eqn:mat1}) gives
$$
\left( \begin{matrix}
x_{00} &x_{10} &-x_{01}&-x_{11}\\
-x_{10}&x_{00}&x_{11}&-x_{01}\\
x_{01}&-x_{11}&x_{00}&-x_{10}\\
x_{11}&x_{01}&x_{10}&x_{00}
\end{matrix} \right).
$$
\end{rema}

\subsubsection*{Rank two forms}
We consider the functional equation for rank-two quadratic forms $(V,q)$. Here $G$ is cyclic or dihedral
$$C_r=\left<\sigma:\sigma^r=1\right>\quad
D_r=\left<\sigma,\varrho:\sigma^r=\varrho^2=1, \varrho\sigma\varrho^{\-1}=\sigma^{-1}\right>.$$ 
In suitable coordinates, $q=x_0x_1$ with
$$\sigma:(x_0,x_1) \mapsto (\zeta x_0, \zeta^{-1}x_1), \quad
\zeta \text{ primitive $r$th root of unity}
$$
and
$$\varrho: (x_0,x_1) \mapsto (x_1,x_0).$$

The similitude group 
$$\GO(V,q) \simeq \bG_m^2 \rtimes C_2$$
where the first factor acts diagonally and the second interchanges $x_0$ and $x_1$.
The diagonal sub-torus of the orthogonal group acts trivially on the identity component of $\GO(V,q)$. When $\tau$ maps $V$ to 
the identity component, we have
\begin{equation} \label{eqn:Crtau}
\tau(x_0,x_1) = \left(\begin{matrix} \tau_{11}(x_0,x_1) & 0 \\
                                       0 & \tau_{22}(x_0,x_1)
                                       \end{matrix} \right),
\end{equation}
where the $\tau_{ii}(x_0,x_1)$ are invariant under
$C_r=G\cap \bG_m$. 

Now for $G=C_r$ we may take 
$$\tau_{11}(x_0,x_1)=x_0x_1, \quad \tau_{22}(x_0,x_1)=1$$
which is equivariantly multiplicative.
Note, however, that when $r$ is even then $\tau$
is an even function of $(x_0,x_1)$, unchanged
under $\pm I$.  
When $G$ contains $\varrho$, we have
$\tau_{11}(x_1,x_0)=\tau_{22}(x_0,x_1)$.
In the odd case, with $r=2m+1$, we may take 
$$\tau(x_0,x_1) = \left( \begin{matrix} x_0^{m+1}/x_1^m & 0 \\
                                        0 & x_1^{m+1}/x_0^m
                                \end{matrix} \right)$$
which gives multiplicativity for the dihedral group $D_{2m+1}$. Here $\tau$ is odd as a function
of $(x_0,x_1)$.

\begin{prop}
Let $G=\left<\sigma,\varrho\right> \simeq C_2\times C_2$, with 
$\sigma=-I$. Then $(V,q)$ is not equivariantly multiplicative.
The same holds for all dihedral groups $D_{2r}$ with $r>0$.
\end{prop}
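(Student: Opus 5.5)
The plan is to use only two pieces of the equivariance condition in Definition~\ref{defn:equmult}: the central element $\sigma=-I$ forces $\tau$ to be an even rational map, and $\varrho$ forces the two nonzero entries of $\tau$ to be exchanged by swapping coordinates. We then compare orders of vanishing at the origin to reach a parity contradiction.

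First, $\GO(V,q)\simeq \bG_m^2\rtimes C_2$ has two connected components. Since $V$ is irreducible, the rational map $\tau:V\dashrightarrow\GO(V,q)$ lands in one of them.

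\emph{Identity component.} Here $\tau=\mathrm{diag}(\tau_{11},\tau_{22})$ as in (\ref{eqn:Crtau}), with $\lambda(\tau(x))=\tau_{11}\tau_{22}$, because $q(ay_0,by_1)=ab\,y_0y_1$. The functional equation (\ref{taufunc}) becomes
$$\tau_{11}(x_0,x_1)\,\tau_{22}(x_0,x_1)=x_0x_1.$$
Equivariance under $\varrho$ reads $\tau(\varrho x)=\varrho\tau(x)\varrho^{-1}$, which gives $\tau_{22}(x_0,x_1)=\tau_{11}(x_1,x_0)$. Equivariance under $\sigma=-I$, which is central, gives $\tau(-x)=\tau(x)$, so $\tau_{11}$ is an even rational function. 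Writing $f=\tau_{11}$, we get
$$f(x_0,x_1)\,f(x_1,x_0)=x_0x_1.$$

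\emph{Other component.} Here $\tau=\left(\begin{smallmatrix}0&a\\ b&0\end{smallmatrix}\right)$ with $\lambda(\tau)=ab$. Conjugation by $\varrho$ gives $a(x_1,x_0)=b(x_0,x_1)$, and evenness again comes from $\sigma$. So this case yields the same identity with $f=a$.

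Next we derive the contradiction using the valuation $v$ measuring order along the exceptional divisor of the blowup at the origin. Concretely, $v(h)$ is the order in $t$ of $h(tx_0,tx_1)$, viewed in $k(x_0,x_1)((t))$. This is a discrete valuation, and it is invariant under the swap $(x_0,x_1)\mapsto(x_1,x_0)$. Applying $v$ to the identity gives $2v(f)=v(x_0x_1)=2$, so $v(f)=1$. On the other hand, $f$ is even, so $f(tx)=f(-tx)$. Hence $f(tx)$ is an even Laurent series in $t$, and its $t$-adic order must be even. This contradicts $v(f)=1$. The only care needed is to check that $v$ is well defined on rational functions and that evenness in $t$ forces even order; this is the step I expect to be the main obstacle, though it is routine.

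Finally, for $D_{2r}$ acting as above, $\sigma^r$ acts by $\zeta^{r}=-1$, so $\sigma^r=-I$. Thus $\langle\sigma^r,\varrho\rangle\simeq C_2\times C_2$ is a subgroup of the kind just treated. An equivariantly multiplicative $\tau$ for $D_{2r}$ is in particular equivariant for this subgroup, which is impossible by the argument above.
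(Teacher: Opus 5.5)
Your proof is correct, but it reaches the contradiction by a different and more elementary route than the paper.

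Both arguments reduce to showing that no even rational function $f$ satisfies $f(x_0,x_1)f(x_1,x_0)=x_0x_1$. Equivalently, $A=x_0x_1$ is not a norm from $L=k(x_0^2,x_0x_1)$ to $K=k(x_0^2+x_1^2,x_0x_1)$. For the anti-diagonal component, the paper replaces $\tau$ with $\varrho\tau$, which is allowed since $\varrho$ commutes with $G$. Your direct treatment of that component works equally well.

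The two proofs then diverge in how they show $A$ is not a norm.
The paper writes the norm equation as a conic over $K=k(A,B)$, with $B=x_0^2+x_1^2$. It shows this conic has a nontrivial residue along a divisor where $L/K$ ramifies (the image of $x_0=\pm x_1$), so the quaternion algebra $(L/K,A)$ is nonsplit.
You instead use the valuation $v$ of the exceptional curve over the origin. It is $\varrho$-invariant and takes only even values on $L$: multiplying numerator and denominator of $f=h/g$ by $g(-x)$ gives a quotient of even polynomials, whose homogeneous parts all have even degree. Hence any norm has valuation divisible by $4$, while $v(A)=2$.

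In the paper's language, yours is the residue obstruction at a different place of $K$, one where $L/K$ is inert and $A$ is a uniformizer. Packaged as a parity count, it needs no diagonalization of the norm form. It also shows that the evenness forced by $-I\in G$ is exactly what excludes valuation-one solutions such as $f=x_0^{m+1}/x_1^{m}$, which the paper uses for $D_{2m+1}$. The paper's version has the merit of exhibiting the obstruction as an explicit nonsplit conic, in keeping with its Brauer-class viewpoint.

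Your reduction from $D_{2r}$ to the subgroup $\langle\sigma^r,\varrho\rangle$ with $\sigma^r=-I$ is the same as the paper's.
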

\begin{proof}
First, $\varrho$ exchanges the two components of 
$\GO(V,q)$ and commutes with $\sigma$. Thus we may 
assume that $\tau$ takes $V$ to the identity component
as in (\ref{eqn:Crtau}).
As above $\tau_{22}(x_0,x_1)=\tau_{11}(x_1,x_0)$, hence
$$\tau_{11}(x_0,x_1)\tau_{11}(x_1,x_0)=x_0x_1.$$
Consider the quadratic field extension
$$L:=k(x_0^2,x_0x_1) / k(x_0^2+x_1^2,x_0x_1)=:K$$
associated with invariants under $C_2$ and $C_2\times C_2$.
We may write $L=K(\gamma)$ with $\gamma=x_0^2$ satisfying
$$\gamma^2 - \gamma(x_0^2+x_1^2) + (x_0x_1)^2.$$
Writing $\tau_{11}=(u+\gamma v)/w$ with $u,v,w$ polynomials in 
$x_0^2+x_1^2$
and $x_0x_1$, we get the homogeneous equation
$$u^2-(x_0^2+x_1^2)uv + (x_0x_1)^2v^2 = x_0 x_1 w^2,$$
a smooth conic over $K$. Now $K$ is purely transcendental
with generators $A=x_0x_1$ and $B=x_0^2+x_1^2$ and our conic
diagonalizes to
$$\hat{u}^2 + (4B^2-A^2) \hat{v}^2 = Aw^2.$$
Localizing along $\left<2B-A\right>$,
where $\varpi$ is the uniformizer of the resulting DVR,
yields
$$\hat{u}^2 + \varpi (\varpi+2A) \hat{v}^2 = Aw^2.$$
The resulting residue is $\sqrt{-A}$, which is non-trivial.

This precludes equivariant multiplicativity for larger groups
$G \supset \left<\sigma,\varrho\right>$.
\end{proof}

\subsection*{Inductive results}
We retain the standing assumptions of
Section~\ref{sect:unstable}. 
Our next result is inspired by the original argument
that Pfister forms are strictly multiplicative
\cite[ch.~2]{Pfisterbook}:

\begin{prop}
Assume $(V,q)$ is non-degenerate, $G$-invariant, and 
equivariantly multiplicative with rational map
$$\tau:V \dashrightarrow \GO(V,q)$$
that is odd as a function of $V$.
Let $W=\operatorname{span}(1,\alpha)$ be the regular representation of $C_2$, with $\alpha$ a non-trivial character of $C_2$. Write
$V\otimes W$ for the resulting representation of $G\times C_2$ and 
$$\wQ(x\otimes 1+x'\otimes \alpha) = q(x) - q(x')$$
for the induced quadratic form.

Then this is equivariant multiplicative for $G\times C_2$, i.e.~there is a linear transformation
$$\wT(x\otimes 1 + x'\otimes \alpha): V\otimes W \rightarrow V\otimes W,$$
with coefficients rational functions in $x$ and $x'$,
with
$$\wQ(\wT(x\otimes 1 + x'\otimes \alpha)\cdot
(y\otimes 1 + y'\otimes \alpha))=
\wQ(x\otimes 1 + x'\otimes \alpha)
\wQ(y\otimes 1 + y'\otimes \alpha).$$
Moreover, $\wT$ is odd as a function on $V\otimes W$.
\end{prop}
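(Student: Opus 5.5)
The plan is Pfister's doubling trick, carried out with explicit block matrices so that equivariance can be read off directly. Write $b$ for the bilinear form of $q$, and for $A\in\End(V)$ write $A^*$ for its $b$-adjoint. The functional relation (\ref{taufunc}) says that $\tau(x)$ is a similitude with multiplier $q(x)$. Since $V$ is finite-dimensional, this gives
$$\tau(x)^*\tau(x)=q(x)I=\tau(x)\tau(x)^*.$$
In particular, $\tau(x)^{-1}=\tau(x)^*/q(x)$ generically. Decompose $V\otimes W=V\otimes 1\oplus V\otimes\alpha$. Then $\wQ=q\perp(-q)$. I will look for $\wT$ as a block matrix
$$\wT(x,x')=\begin{pmatrix} A & B\\ C & D\end{pmatrix}.$$
The required identity $\wQ(\wT\cdot(y,y'))=\wQ(x,x')\wQ(y,y')$ is equivalent to three conditions:
$$A^*A-C^*C=\varphi I,\qquad B^*B-D^*D=-\varphi I,\qquad A^*B=C^*D,$$
where $\varphi=q(x)-q(x')$.

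For the construction, I take $A=\tau(x)$, $C=\tau(x')$ and $B=\tau(x')$. The first condition holds at once, since $A^*A-C^*C=(q(x)-q(x'))I$. The third condition then determines
$$D=(\tau(x')^*)^{-1}\tau(x)^*\tau(x'),$$
which is a rational function of $(x,x')$, defined where $q(x')\neq 0$. For the second condition I use $\tau(x')\tau(x')^*=q(x')I$ together with $\tau(x)\tau(x)^*=q(x)I$. These give
$$D^*D=\tau(x')^*\tau(x)\bigl(\tau(x')\tau(x')^*\bigr)^{-1}\tau(x)^*\tau(x')=q(x)I,$$
so $B^*B-D^*D=(q(x')-q(x))I=-\varphi I$, as required. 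This verification is routine; the only point needing care is the two-sided identity $\tau\tau^*=\tau^*\tau=qI$.

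It remains to check symmetry and oddness.

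\emph{$G$-equivariance.} The form $q$ is $G$-invariant, so $(gAg^{-1})^*=gA^*g^{-1}$. Equivariance of $\tau$ then makes each block transform by conjugation under $g\in G$, which acts diagonally on both summands.

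\emph{$C_2$-equivariance.} The generator of $C_2$ acts as $\mathrm{diag}(1,-1)$ and sends $x'\mapsto -x'$. Conjugation by $\mathrm{diag}(1,-1)$ negates the off-diagonal blocks $B,C$ and fixes $A,D$. On the other side, the substitution $x'\mapsto -x'$ negates $B=C=\tau(x')$ because $\tau$ is odd, and it fixes $D$ because $D$ involves $\tau(x')$ twice (once inverted). The two effects match.

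\emph{Oddness of $\wT$.} Under $(x,x')\mapsto(-x,-x')$, the blocks $A,B,C$ change sign. The block $D$ is a product of three odd factors, $(\tau(x')^*)^{-1}$, $\tau(x)^*$ and $\tau(x')$, so it also changes sign.

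The step I expect to be the real obstacle is the bookkeeping for the $C_2$-equivariance, since it is the place where the oddness hypothesis on $\tau$ is genuinely used. If this bookkeeping failed, I would instead try the symmetric variant $B=\tau(x)^{*-1}\tau(x')^*\tau(x)$, $D=\tau(x)$, chosen so that the diagonal blocks stay even in $x'$ and the off-diagonal blocks stay odd in $x'$.
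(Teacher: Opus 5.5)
Your proposal is correct and is essentially the paper's proof: your block $D=(\tau(x')^*)^{-1}\tau(x)^*\tau(x')$ simplifies to the paper's $\frac{q(x)}{q(x')}\tau(x')\tau(x)^{-1}\tau(x')$, and your $\wT$ differs from the paper's only in the sign of the off-diagonal blocks. The paper uses $-\tau(x')$ there, so your matrix is the paper's conjugated by $\mathrm{diag}(1,-1)$, and equivariance and oddness are checked the same way. The only real difference is that you verify the identity through the adjoint conditions $A^*A-C^*C=\varphi I$, $B^*B-D^*D=-\varphi I$, $A^*B=C^*D$, whereas the paper expands $\wQ(\wT\cdot(y\otimes 1+y'\otimes\alpha))$ and cancels the cross terms. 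In your $D^*D$ computation the middle factor should be $(\tau(x')^*\tau(x'))^{-1}$ rather than $(\tau(x')\tau(x')^*)^{-1}$, but this is harmless since both equal $q(x')^{-1}I$.
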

\begin{proof}
Set 
\begin{equation}
\wT(x \otimes 1 + x' \otimes \alpha)
= \left(
\begin{matrix} \tau(x) & -\tau(x') \\
        -\tau(x') & \frac{q(x)}{q(x')}\tau(x')\tau(x)^{-1}\tau(x')
        \end{matrix} \right)
\label{eqn:wT}
\end{equation}
a matrix expressed in terms of $\{1,\alpha \}.$
We expand out 
$$\wQ\left(\wT(x\otimes 1 + x'\otimes \alpha)\cdot
(y\otimes 1 + y'\otimes \alpha)\right),$$
using the functional relation along with the fact
that $q$ is quadratic, as follows:
\begin{align*}
q(\tau(x)y - \tau(x')y') - 
q(-\tau(x')y+\frac{q(x)}{q(x')}\tau(x')\tau(x)^{-1}\tau(x')y')  \\
= 
q(\tau(x)y - \tau(x')y') - 
q(x') q(-y+\frac{q(x)}{q(x')}\tau(x)^{-1}\tau(x')y')\\
=
q(\tau(x)y - \tau(x')y') - 
\frac{q(x')}{q(x)} q(-\tau(x)y+\frac{q(x)}{q(x')}\tau(x')y') \\
= q(\tau(x)y - \tau(x')y') - \frac{1}{q(x)q(x')}
q(-q(x')\tau(x)y+q(x)\tau(x')y').
\end{align*}
Writing $b(,)$ for the symmetric bilinear form 
associated with $q$, this becomes
\begin{align*}
&q(\tau(x)y) - 2b(\tau(x)y,\tau(x')y')+q(\tau(x')y') \\
&-\frac{q(q(x')\tau(x)y)-2b(q(x')\tau(x)y,q(x)\tau(x')y') + q(q(x)\tau(x')y')}{q(x)q(x')}.
\end{align*}
The bilinear terms cancel, so clearing denominators
yields the simplification
$$q(\tau(x)y) + q(\tau(x')y') - q(\tau(x)y)\frac{q(x')}{q(x)}
- q(\tau(x')y') \frac{q(x)}{q(x')}.$$
Applying the functional relation again gives
$$q(x)q(y) + q(x')q(y') - q(y)q(x') - q(y')q(x)
=(q(x)-q(x'))(q(y)-q(y')),$$
as desired.

Equivariance of $\wT$ under the action of $G$ follows from the equivariance of $\tau$. Equivariance under
$C_2$ follows because $\tau$ is odd:
$$\left(
\begin{matrix} \tau(x) & -\tau(-x') \\
        -\tau(-x') & \frac{q(x)}{q(x')}\tau(-x')\tau(x)^{-1}\tau(-x')
        \end{matrix} \right)
        =
        \left(
\begin{matrix} \tau(x) & \tau(x') \\
        \tau(x') & \frac{q(x)}{q(x')}\tau(x')\tau(x)^{-1}\tau(x')
        \end{matrix} \right); 
$$
this is the conjugation of our original matrix by
$$\alpha = \left( \begin{matrix} 1 & 0 \\ 0 & -1 \end{matrix}
\right).$$
Formula (\ref{eqn:wT}) also shows that $\wT$ is odd 
as a function of $(x,x')$.
\end{proof}

Proposition~~\ref{prop:stabpfisbasic} shows that stably Pfister equivariant quadratic forms behave well under tensor 
product.

\begin{ques}
Suppose that $(V,q)$ and $(W,r)$ are equivariantly multiplicative for $G$ and $H$ respectively.  
Is $(V\otimes W, q\otimes r)$ equivariantly multiplicative for $G\times H$?
\end{ques}

\bibliographystyle{alpha}
\bibliography{PE}

\end{document}